\def\N{{{\Bbb N}}}
\def\Z{{{\Bbb Z}}}
\def\T{{{\Bbb T}}}
\def\R{{\Bbb R}}
\def\C{{\Bbb C}}
\def\l{{\lambda }}
\def\a{{\alpha }}
\def\D{{\Delta }}
\def\a{{\alpha}}
\def\b{{\beta}}
\def\d{{\delta}}
\def\e{{\varepsilon}}
\def\s{{\sigma}}
\def\vp{{\varphi}}
\def\t{{\tau }}
\def\g{{\gamma }}
\def\w{{\omega }}
\def\L{{\mathcal{L} }}
\def\){\right)}
\def\({\left(}
\def\supp{\operatorname{supp}}
\numberwithin{equation}{section}
\newtheorem{theorem}{Theorem}[section]
\newtheorem{corollary}[theorem]{Corollary}
\newtheorem{lemma}[theorem]{Lemma}
\newtheorem{proposition}[theorem]{Proposition}
\newtheorem{remark}[theorem]{Remark}
\newtheorem{definition}[theorem]{Definition}
\newtheorem{example}[theorem]{Example}
\begin{document}

%
%
%
%
%

\bigskip

\title[]{
Marcinkiewicz--Zygmund inequalities \\ in quasi-Banach function spaces}

\author[Yurii
Kolomoitsev]{Yurii
Kolomoitsev$^{\text{1}}$}
\address{}
\email{kolomoitsev@gmail.com}

\author[Sergey Tikhonov]{Segey Tikhonov$^\text{a, 2}$}
\address{Centre de Recerca Matem\`atica, Campus de Bellaterra, Edifici C 08193
Bellaterra, Barcelona, Spain; ICREA, Pg. Llu\'is Companys 23, 08010
Barcelona, Spain, and Universitat Aut\'onoma de Barcelona}
\email{stikhonov@crm.cat}


\thanks{$^\text{a}$Centre de Recerca Matem\`atica, Campus de Bellaterra, Edifici C 08193
Bellaterra, Barcelona, Spain; ICREA, Pg. Llu\'is Companys 23, 08010
Barcelona, Spain, and Universitat Aut\'onoma de Barcelona}


\thanks{$^1$Supported by the German Research Foundation in the framework of the RTG 2088}
\thanks{$^2$Supported
by PID2023-150984NB-I00, 2021 SGR 00087, AP 23488596,
the CERCA Programme of the Generalitat de Catalunya, and Severo Ochoa and Mar\'{i}a de Maeztu
Program for Centers and Units of Excellence in R\&D (CEX2020-001084-M)
}


\thanks{E-mail address: kolomoitsev@gmail.com}

\date{\today}
\subjclass[2010]{41A25, 46E30} \keywords{Marcinkiewicz--Zygmund inequality, quasi-Banach spaces, Bernstein inequality}

\begin{abstract}
We obtain Marcinkiewicz--Zygmund (MZ) inequalities  in various Banach and quasi-Banach spaces
under minimal assumptions on the  structural properties of these spaces. Our main results show that the Bernstein inequality in a general quasi-Banach function lattice $X$ implies Marcinkiewicz--Zygmund type estimates in $X$. We present a general approach to obtain MZ inequalities not only for polynomials but for other function classes  including  entire functions of exponential type, splines,  exponential sums, etc.
\end{abstract}

\maketitle



\section{Introduction}

The classical Marcinkiewicz--Zygmund (MZ) inequality for trigonometric polynomials in  $L_p(\T)$, $\T=[0,2\pi),$ has the following form
\begin{equation}\label{mzclassical}
C^{-1}\(\frac1{N+1}\sum_{k=0}^{N}|T(t_{k})|^p\)^{1/p}\le \|T\|_{L_p(\T)}\le C\(\frac1{N+1}\sum_{k=0}^{N}|T(t_{k})|^p\)^{1/p},
\end{equation}
where $T$ is a trigonometric polynomial of degree at most $n$,
$\mathcal{T}_n=\{T\,:\, T(x)=\sum_{k=-n}^n c_k e^{ikx}\}$,
$t_k=\frac{2\pi k}{N+1}$, $k=0,\dots,N$,
$$
N=\left\{
                                                \begin{array}{ll}
                                                  2n, & \hbox{if $1<p<\infty$,} \\
                                                  {[(2+\varepsilon)n]+1}, \,\,\varepsilon>0, & \hbox{if $0<p\le 1$ or $p=\infty$,}
                                                \end{array}
                                              \right.
$$
and the constant $C$ depends only on $p$ and $\varepsilon$, cf.~\cite[Ch.~X, \S~7]{Z}, \cite{Pe83}.

This estimate plays a key role in the modern discretization  theory and sampling recovery both in the theoretical and applied aspects. Various generalizations of the MZ inequality are known, see, e.g.,~\cite{CZ99, FHJU24, KKLT22, Kr20, MT00, OP07, RS97, Xu91}.



It is easy to see that \eqref{mzclassical} can be equivalently written as follows
\begin{equation}\label{mzclassical1}
C^{-1}\bigg\|\sum_{k=0}^{N}|T(t_{k})| \chi_{(t_k,t_{k+1})}\bigg\|_{L_p(\T)}\le \|T\|_{L_p(\T)}\le C
\bigg\|\sum_{k=0}^{N}|T(t_{k})| \chi_{(t_k,t_{k+1})}\bigg\|_{L_p(\T)}.
\end{equation}
This formula shows that the $L_p$ norm of a trigonometric polynomial is equivalent to the same norm of a simple function that interpolates the function at given points. This observation gives rise to the following natural question:

\smallskip

\noindent {\it Under what conditions
on the space $X$, the subspace $F_n\subset X$, and the points $(x_k)_{k=0}^N$  does
the  Marcinkiewicz--Zygmund inequality
\begin{equation}\label{form}
   C^{-1}\bigg\|\sum_{k=0}^{N}|F(x_{k})| \chi_{(x_k,t_{x+1})}\bigg\|_{X}\le \|F\|_{X}\le C
\bigg\|\sum_{k=0}^{N}|F(x_{k})| \chi_{(x_k,x_{k+1})}\bigg\|_{X}
\end{equation}
hold for any $F\in F_n$ with a constant $C$ independent of $F_n$ and $N$?
}

\smallskip

\noindent In this paper we answer this question for a Banach and
quasi-Banach  function space $X$, more general than $L_p$ or even having different structure.

{\it Notations.} First, we specify function spaces and (quasi)-norm, for which we generalize the MZ inequalities. Let $X=X(\Omega, \mu; q)$, $0<q\le 1$, be a (quasi-)Banach function lattice on a domain $\Omega\subset \R^d$ endowed with positive $\s$-finite measure~$\mu$
(cf. ~\cite[Ch.~2]{BeSh}). In what follows, we will assume that the functional $\|\cdot\|_X\,:\, X\mapsto \R_+$ satisfies the following properties:

\begin{itemize}
  \item[$1)$] $\|f\|_X=0$ if and only if $f=0$,
  \item[$2)$] $\|\l f\|_X=|\l|\|f\|_X$ for all $\l\in \C$,
  \item[$3)$] $\|f+g\|_X^q\le \|f\|_X^q+\|g\|_X^q$,
  \item[$4)$] if $|f|\le |g|$, then $\|f\|_X\le \|g\|_X$ for all $f,g\in X$,
    \item[$5)$] $\chi_B\in X$ for all $B\subset \Omega$ with $\mu(B)<\infty$.

\end{itemize}

If $X$ is a Banach space (when $q=1$), then we will write $X=X(\Omega, \mu)=X(\Omega, \mu; 1)$ and  assume that $X\subset L_1$. In this case, $X'$ denotes the associate space of $X$ with the norm given by
$$
\|g\|_{X'}=\sup_{\|f\|_X\le 1} \int_\Omega g(x)\overline{f(x)}d\mu(x),
$$
see \cite[p.~12]{BeSh}.

Recall that the space $X$ is called translation invariant if for all $t\in \Omega$ and $f\in X$, there hold $f(x+t)\in X$ and $\|f(\cdot+t)\|_X=\|f\|_X$.
Also, the space $X$ is called rearrangement invariant if $\|f\|_X=\|g\|_X$ whenever
$\mu_f(x)=\mu_g(x)$, where
$\mu_h(x)=\mu(\{x\in \Omega\,:\, |h(x)|\ge y\})$
is the distribution function of $h$.

In what follows, $\mathcal{F}_n$ denotes a subspace of $X\cap C(\R^d)$, which depends on the parameter $n$, where
$$
n\in \mathbb{N}\quad\text{or}\quad n\,\,\text{is a real positive number}.
$$
We note that for all classical examples of subspaces $\mathcal{F}_n$ (trigonometric and algebraic polynomials, splines, entire functions of exponential type) there holds the condition $\mathcal{F}_m\subset \mathcal{F}_{n}$ for $n\ge m$. However, we do not require this assumption in our main results.



Throughout the paper, we will use the letter
$C$ to denote various positive constants, which may differ even within the same line where they appear.
We also use the notation
$
\, A \lesssim B,
$
with $A,B\ge 0$, for the
estimate
$\, A \le C\, B,$ where $\, C$ is a positive constant independent of
the essential variables in $\, A$ and $\, B$ (usually, $\d$, $n$, and $N$). 
 If $\, A \lesssim B$
and $\, B \lesssim A$ simultaneously, we write $\, A \asymp B$ and say that $\, A$
is equivalent to $\, B$.
As usual, for $\a\in \Z_+^d$,
$$
D^\a=\frac{\partial^{|\a|_1}}{\partial x_1^{\a_1}\dots\partial x_d^{\a_d}}.
$$
For $\d>0$, we denote
 $$
 Q(x,\d)=\{y\in \Omega\,:\,\|x-y\|_\infty\le \d\}.
 $$

{\it Structure of the paper.} In Section~2 we establish MZ inequalities in Banach and quasi-Banach spaces: in Subsection~2.1, we investigate the boundedness of certain maximal and minimal functions in $X$ and then in Subsection~2.2 we apply these results  to derive MZ inequalities in form~\eqref{form}.
In Section~3,
we obtain MZ-type inequalities
with minimal number of nodes with the help of  quadrature formulas.
Sections~4 and~5 are devoted to  MZ estimates for special and important  examples, namely,  for trigonometric and algebraic polynomials.
Examples for other function systems are considered in Section~6.
In Section~7, we consider some applications of MZ-type inequalities including the approximation of functions by sampling operators and polynomials in general Banach spaces, Nikolskii-type inequalities, and embedding theorems.








\section{MZ-type inequalities in Banach and quasi-Banach spaces}

\subsection{Boundedness of maximal and minimal functions  
}


We start with boundedness results for certain  maximal and minimal functions of elements from $\mathcal{F}_n$ in transaltion invariant function lattices $X$.

\begin{theorem}\label{thmm2}
Let $X$ be a translation invariant Banach lattice. Assume that there exist $\b>0$ and $B>0$ such that
   \begin{equation}\label{ber}
         \|D^\a F_n\|_X\le (Bn^\beta)^{|\a|_1}\| F_n\|_X,\quad \a\in\{0,1\}^d,\quad F_n\in \mathcal{F}_n.
   \end{equation}
Then the following assertions hold:

\medskip

  \noindent{\rm (A)} For any $F_n\in \mathcal{F}_n$ and any $A>0$ satisfying
  \begin{equation}\label{CCC}
    \eta=(1+2AB)^d-1<1,
  \end{equation}
  we have
   \begin{equation}\label{mmn}
      \frac{1}{1+\eta}\bigg\|\max_{t\in Q(x,\frac{A}{n^\b})}|F_n(t)|\bigg\|_X \le \|F_n\|_X\le \frac{1}{1-\eta}\bigg\|\min_{t\in Q(x,\frac{A}{n^\b})}|F_n(t)|\bigg\|_X.
 \end{equation}

 \noindent{\rm (B)} The left-hand side inequality in~\eqref{mmn} holds for any $A>0$, that is, we do not require that $\eta<1$.
\end{theorem}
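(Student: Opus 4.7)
The plan is to reduce both parts of the theorem to a single pointwise estimate for $\max_{t\in Q(x,\d)}|F_n(t)|$ with $\d=A/n^\b$. I would start from the telescoping identity
\[
F_n(x+u)-F_n(x)=\sum_{j=1}^{d}\int_{0}^{u_j}D_jF_n\bigl(x_1,\dots,x_{j-1},x_j+s,x_{j+1}+u_{j+1},\dots,x_d+u_d\bigr)\,ds,
\]
and iteratively apply the one-variable max operator $\mathcal{M}_jF(x):=\max_{|v|\le\d}|F(x+ve_j)|$, interchanging $\mathcal{M}_j$ with the integral via the elementary inequality $\mathcal{M}_j\int g(\cdot,s)\,ds\le\int\mathcal{M}_jg(\cdot,s)\,ds$. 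Together with the bound $\mathcal{M}_jF(x)\le|F(x)|+\int_{-\d}^{\d}|D_jF(x+se_j)|\,ds$, this yields the pointwise estimate
\[
\max_{t\in Q(x,\d)}|F_n(t)|\le\sum_{S\subseteq\{1,\dots,d\}}\int_{[-\d,\d]^{|S|}}\Bigl|D^{\a^S}F_n\Bigl(x+\sum_{j\in S}s_je_j\Bigr)\Bigr|\prod_{j\in S}ds_j,
\]
where $\a^S\in\{0,1\}^d$ is the characteristic vector of $S$ (so $|\a^S|_1=|S|$).

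Next, taking the $X$-norm of both sides, the integral Minkowski inequality (valid in any Banach function lattice), translation invariance, and the Bernstein hypothesis \eqref{ber} applied with $\a=\a^S$ bound each summand by $(2\d)^{|S|}(Bn^\b)^{|S|}\|F_n\|_X=(2AB)^{|S|}\|F_n\|_X$. Summing over all $S\subseteq\{1,\dots,d\}$ gives
\[
\Bigl\|\max_{t\in Q(x,\d)}|F_n(t)|\Bigr\|_X\le(1+2AB)^d\|F_n\|_X=(1+\eta)\|F_n\|_X,
\]
which is precisely the left-hand inequality in \eqref{mmn}. Since this derivation never invokes $\eta<1$, it simultaneously establishes part (B).

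For the right-hand inequality in (A), I would start from the pointwise bound
\[
|F_n(x)|\le\min_{t\in Q(x,\d)}|F_n(t)|+\max_{t\in Q(x,\d)}|F_n(t)-F_n(x)|,
\]
and repeat the argument above applied to the difference $|F_n(t)-F_n(x)|$. The telescoping representation of this difference contains no constant ($S=\emptyset$) term, so the resulting sum runs only over nonempty $S$ and contributes $[(1+2AB)^d-1]\|F_n\|_X=\eta\|F_n\|_X$ in the $X$-norm. Taking $X$-norms and rearranging under the assumption $\eta<1$ then yields $\|F_n\|_X\le(1-\eta)^{-1}\bigl\|\min_{t\in Q(x,\d)}|F_n(t)|\bigr\|_X$.

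The main delicate point will be the first step: iterating the commutation of $\mathcal{M}_j$ with integrals over the ``earlier'' variables appearing in the telescoping identity to obtain the clean subset-indexed pointwise bound. Once that is in place, everything else reduces to a direct application of the axioms of a translation-invariant Banach lattice together with the Bernstein hypothesis \eqref{ber}.
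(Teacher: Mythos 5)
Your proposal is correct and follows essentially the same route as the paper: the subset-indexed pointwise bound you derive by iterating the one-dimensional fundamental theorem of calculus with the operators $\mathcal{M}_j$ is exactly the generalized Newton--Leibniz decomposition over multi-indices $\a\in\{0,1\}^d$ that the paper invokes directly, and the remaining steps (integral Minkowski via translation invariance, the Bernstein hypothesis giving $(2AB)^{|\a|_1}$ per term, summation to $\eta=(1+2AB)^d-1$, and the triangle-inequality endgame for both the max and min bounds) coincide with the paper's argument. The only difference is presentational: you rebuild the Newton--Leibniz formula by hand and push the max inside before taking norms, whereas the paper cites the formula and estimates the norm of the maximal difference directly.
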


\begin{proof}
By the generalized Newton-Leibnitz formula (see, e.g.,~\cite[Ch.~4, \S~16]{BIN75}), we have
  \begin{equation}\label{nl}
    F_n(x+t)-F_n(x)=\sum_{
    \a\in \{0,1\}^d \setminus  \{\bar{0}\}}\int_{[0,t]^\a} D^\a F_n(x+\a v)(dv)^\a,
  \end{equation}
  where $\a v=(\a_1v_1,\dots,\a_dv_d)$. Denote $\d:=\tfrac{A}{n^\b}$. Applying~\eqref{nl}, the triangle inequality, Lemma~\ref{lecon}, and Bernstein-type inequality~\eqref{ber}, we obtain
  \begin{equation}\label{mm1-}
    \begin{split}
       \bigg\|\max_{t\in Q(0,\d)}|F_n(x+t)-F_n(x)|\bigg\|_X&\le \sum_{\a\in \{0,1\}^d \setminus  \{\bar{0}\}}\bigg\|\int_{[-\d,\d]^\a} |D^\a F_n(x+\a v)|(dv)^\a\bigg\|_X\\
       &\le \sum_{\a\in \{0,1\}^d \setminus  \{\bar{0}\}} (2\d)^{|\a|_1}\|D^\a F_n\|_X\\
       &\le \sum_{\a\in \{0,1\}^d \setminus  \{\bar{0}\}} (2\d)^{|\a|_1}(Bn^\beta)^{|\a|_1}\|F_n\|_X\\
       &= \sum_{\a\in \{0,1\}^d \setminus  \{\bar{0}\}} (2AB)^{|\a|_1}\|F_n\|_X\\
       &=\((1+2AB)^d-1\)\|F_n\|_X=\eta\|F_n\|_X.
     \end{split}
  \end{equation}
 Using again the triangle inequality and~\eqref{mm1-}, we have
  \begin{equation*}
    \begin{split}
        \bigg\|\max_{{t\in Q(0,\d)}}|F_n(x+t)|\bigg\|_X-\|F_n\|_X&\le \bigg\|\max_{{t\in Q(0,\d)}}|F_n(x+t)|-|F_n(x)|\bigg\|_X\\
        &\le \bigg\|\max_{{t\in Q(0,\d)}}|F_n(x+t)-F_n(x)|\bigg\|_X\le \eta\|F_n\|_X,
     \end{split}
  \end{equation*}
which implies the left-hand side inequality in~\eqref{mmn}. Similarly, 
 we get
\begin{equation*}
  \begin{split}
      \|F_n\|_X-\bigg\|\min_{{t\in Q(0,\d)}}|F_n(x+t)|\bigg\|_X&\le \bigg\| |F_n(x)|-\min_{{t\in Q(0,\d)}}|F_n(x+t)|\bigg\|_X\\
      &\le
      \bigg\|\max_{{t\in Q(0,\d)}}|F_n(x)-F_n(x+t)|\bigg\|_X\le \eta\|F_n\|_X.
   \end{split}
\end{equation*}
This gives the right-hand side inequality in~\eqref{mmn}.
\end{proof}

The following remarks are in order.

\begin{remark}\label{rem1}

%
%
We note that the size of the cube $Q(0,\tfrac{A}{n^\b})$  cannot be enlarged. 
  Take, for example, $X=L_p(\T)$, $\b=1$,  and $F_n(x)=\sin nx$. Then
  $\min\limits_{{t\in Q(x,\d)}}|F_n(t)|=0$ for all $x\in \T$ if $\d=\frac{C}{n}$ for sufficiently large $c$.
\end{remark}

\begin{remark}\label{remark2}
$(i)$ Carefully checking the proof of Theorem~\ref{ber}, we observed that the conclusion of this theorem remains valid for any Banach function lattice $X=X(\Omega,\mu)$ if we assume that

  \begin{itemize}
    \item[$1)$]  there exist $\b>0$ and $B>0$ such that
   \begin{equation*}
      D^\a F_n\in \mathcal{F}_n\quad\text{and}\quad   \|D^\a F_n\|_X\le (Bn^\beta)^{|\a|_1}\| F_n\|_X,\quad \a\in\{0,1\}^d,\quad F_n\in \mathcal{F}_n;
   \end{equation*}

    \item[$2)$] for any $A>0$ there exists a constat $c=c(A,X)$ such that
    \begin{equation*}
    \|F_n(\cdot+t)\|_X\le c\|F_n\|_X,\quad t\in Q(0,\tfrac{A}{n^\b}),\quad F_n\in \mathcal{F}_n.
  \end{equation*}
  \end{itemize}
In this case, the corresponding constant $\eta$ and condition~\eqref{CCC} should be replaced by $\eta=c((1+2AB)^d-1)<1$.


$(ii)$  Repeating the proof of Theorem~\ref{thmmwe} below, one can show that condition 2)  holds with the constant $c=e^{dAB}-1$. 
  %
%
\end{remark}

\begin{remark}\label{remm2.5}
  Let $X=X(\Omega,\mu)$ be a Banach function lattice.  Assume that $\mathcal{F}_n$ is an $n$-dimensional subspace of $C(\R^d)$ such that for any $F_n \in \mathcal{F}_n$ and $t\in \Omega$ we have $F_n(\cdot+t)\in \mathcal{F}_n$ and
$\|F_n(\cdot+t)\|_X\le c\|F_n\|_X$, where $c$ depends only on $X$. If
$$
\|D^\a F_n\|_{L_\infty(\Omega)}\le (Bn^\beta)^{|\a|_1}\| F_n\|_{L_\infty(\Omega)},\quad \a\in\{0,1\}^d,\quad F_n\in \mathcal{F}_n,
$$
then~\eqref{ber} holds in the following form
   \begin{equation*}
         \|D^\a F_n\|_X\le c(Bn^\beta)^{|\a|_1}\| F_n\|_X,\quad \a\in\{0,1\}^d,\quad F_n\in \mathcal{F}_n.
   \end{equation*}
This follows from Lemma~\ref{ilf}, see also the proof of Proposition~\ref{pMarkX}.
\end{remark}

\begin{remark}
Under the conditions of Theorem~\ref{thmm2}, for any $F_n\in \mathcal{F}_n$ and  $0<\d\le\frac{A}{n^\b}$ with $A$ satisfying~\eqref{CCC}, the following relation holds
  $$
  \|F_n\|_X\asymp \bigg\|\frac1{\mu(Q_\d(x,\d))}\int_{Q(x,\d)}|F_n(x)|d\mu(x)\bigg\|_X.
  $$
\end{remark}


Our next result provides an extension
of Theorem~\ref{thmm2} in several directions.
First, we derive an analogue of~\eqref{mmn}
for quasi-Banach spaces $X$, not necessarily  translation invariant.
Second, in place of the classical Bernstein type inequality giving by $\|F_n'\|_X\lesssim n^\beta\|F_n\|_X$,
we assume its weighted version:
$\|\vp_n F_n'\|_X\lesssim n^\beta\|F_n\|_X$. The weight $\vp_n$  plays the crucial role for non-periodic case when $\Omega$ is a bounded domain.
For example, for algebraic polynomial $P_n$ and $X=L_p[-1,1]$,  the Markov inequality states that $\|P_n'\|_X\lesssim n^2\|P_n\|_X$ but  the Bernstein inequality
$\|\vp_n P_n'\|_X\lesssim n\|P_n\|_X$ with
$\vp_n(x)=\sqrt{1-x^2}+\tfrac1n$
provides
a more accurate bound, cf.
 Remark~\ref{exPnS}.

On the other hand, unlike
Theorem ~\ref{thmm2}, in the next result we require
 that the elements of $\mathcal{F}_n$ have infinitely many derivatives. This is not the case for spline functions,  see Section
\ref{splines-}.



\begin{theorem}\label{thmmwe}
Let $X=X(\Omega, \mu; q)$, $0<q\le 1$, be a  quasi-Banach lattice.
  Assume that there exist $\b>0$ and a nonnegative function $\vp_n$ on $\Omega$  such that for all $\a\in\Z_+^d$
  \begin{equation}\label{berwe}
    \|\vp_n^{|\a|_1}D^\a F_n\|_X\le B_\a n^{\beta|\a|_1}\| F_n\|_X,\quad F_n\in \mathcal{F}_n,
  \end{equation}
 with $B_\a>0$. Then the following assertions hold:

\smallskip

\noindent{\rm (A)} For each $F_n\in \mathcal{F}_n$ and any constant $A>0$ satisfying
\begin{equation}\label{asumpeta}
  \eta=\sum_{\a\in\Z_+^d\setminus  \{\bar{0}\}}\(\frac{A^{|\a|_1}B_\a}{\a!}\)^{q}<1,
\end{equation}
 we have
  \begin{equation}\label{eqwe1}
  \begin{split}
           \frac1{\(1+\eta\)^{1/q}}\bigg\|\max_{t\in Q\(x, \tfrac{A \vp_n(x)}{n^\beta}\)}|F_n(t)|\bigg\|_X
           &\le \|F_n\|_X\\
           &\le \frac1{\(1-\eta\)^{1/q}}\bigg\|\min_{t\in Q\(x, \tfrac{A \vp_n(x)}{n^\beta}\)}|F_n(t)|\bigg\|_X.
  \end{split}
  \end{equation}

 \noindent{\rm (B)} The left-hand side inequality in~\eqref{eqwe1} holds for any $A>0$, that is, we do not require that $\eta<1$.
\end{theorem}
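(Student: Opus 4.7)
My plan is to imitate the proof of Theorem~\ref{thmm2}, with two substitutions: replace the Newton--Leibnitz expansion by a Taylor expansion centred at the base point $x$, and replace the ordinary triangle inequality by the $q$-triangle inequality furnished by property~3) of $\|\cdot\|_X$. Concretely, the first step is to write
\begin{equation*}
F_n(x+t)-F_n(x)=\sum_{\a\in\Z_+^d\setminus\{\bar 0\}}\frac{t^\a}{\a!}\,D^\a F_n(x),
\end{equation*}
which is legitimate for the motivating classes $\mathcal{F}_n$ (trigonometric and algebraic polynomials, entire functions of exponential type); in the abstract setting the finiteness of the sum in~\eqref{asumpeta} provides precisely the quantitative control needed to make sense of the series (or of its truncations together with a tail estimate).

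Since $|t^\a|\le(A\vp_n(x)/n^\b)^{|\a|_1}$ on $Q(0,A\vp_n(x)/n^\b)$, the expansion yields the pointwise bound
\begin{equation*}
\max_{t\in Q(0,A\vp_n(x)/n^\b)}|F_n(x+t)-F_n(x)|\le\sum_{\a\ne\bar 0}\frac{A^{|\a|_1}}{\a!\,n^{\b|\a|_1}}\vp_n(x)^{|\a|_1}|D^\a F_n(x)|.
\end{equation*}
I would then apply $\|\cdot\|_X$ to both sides, distribute its $q$-th power across the sum by property~3), and invoke the weighted Bernstein inequality~\eqref{berwe} term by term. This produces the single key estimate
\begin{equation*}
\bigg\|\max_{t\in Q(0,A\vp_n(x)/n^\b)}|F_n(x+t)-F_n(x)|\bigg\|_X^q\le\sum_{\a\ne\bar 0}\bigg(\frac{A^{|\a|_1}B_\a}{\a!}\bigg)^{q}\|F_n\|_X^q=\eta\,\|F_n\|_X^q.
\end{equation*}
From this estimate, both inequalities in~\eqref{eqwe1} follow verbatim as in the proof of Theorem~\ref{thmm2}, with the $q$-triangle inequality replacing the usual one: comparing $\max_t|F_n(x+t)|$ with $|F_n(x)|$ gives the upper bound $\|\max_t|F_n(x+t)|\|_X^q\le(1+\eta)\|F_n\|_X^q$, and comparing $|F_n(x)|$ with $\min_t|F_n(x+t)|$ yields $\|F_n\|_X^q\le\|\min_t|F_n(x+t)|\|_X^q+\eta\,\|F_n\|_X^q$, which after rearrangement gives the right-hand inequality.

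Part~(B) then comes for free, since the hypothesis $\eta<1$ enters only in the last rearrangement step of the right-hand inequality where one divides by $1-\eta$; the left-hand inequality only needs that $(1+\eta)^{1/q}$ be finite and therefore remains valid for any $A>0$. The main obstacle I anticipate is the rigorous justification of the Taylor series for a general abstract subspace $\mathcal{F}_n$; for all the concrete classes considered in the paper, however, the series terminates or converges absolutely, so this step is unproblematic. A minor technical convenience worth noting: $\vp_n(x)$ is evaluated exactly at the centre of the cube $Q(x,A\vp_n(x)/n^\b)$, which is also the Taylor base point, so the weight aligns perfectly with the derivative bounds provided by~\eqref{berwe} and no additional regularity of $\vp_n$ across the cube is required.
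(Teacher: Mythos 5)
Your proposal is correct and follows essentially the same route as the paper: a Taylor expansion at the base point $x$, the pointwise bound $|t^\a|\le(A\vp_n(x)/n^\b)^{|\a|_1}$ on the cube, the $q$-triangle inequality to distribute the quasi-norm over the series, and the weighted Bernstein inequality~\eqref{berwe} term by term, yielding the key estimate $\|\max_t|F_n(x+t)-F_n(x)|\|_X^q\le\eta\|F_n\|_X^q$ from which both halves of~\eqref{eqwe1} and part~(B) follow exactly as you describe. The paper's only cosmetic difference is that it rescales the cube to $Q(0,A/n^\b)$ and writes the increment as $F_n(x+\vp_n(x)t)-F_n(x)$, and it shares your implicit assumption that the Taylor series represents $F_n$ on the cube.
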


\begin{proof}
Denote $\d:=\frac{A}{n^\beta}$.  Applying  the Taylor formula,  the triangle inequality, and Bernstein inequality~\eqref{berwe}, we obtain
  \begin{equation}\label{mm1we}
    \begin{split}
       \bigg\|\max_{t\in Q(0,\d)}&|F_n(x+\vp_n(x)t)-F_n(x)|\bigg\|_X^q\\
       &=\bigg\|\max_{t\in Q(0,\d)}\bigg|\sum_{\a\in\Z_+^d\setminus  \{\bar{0}\}} \frac{D^\a F_n(x)}{\a!}(\vp_n(x)t)^\a\bigg|\bigg\|_X^q\\
    &\le\bigg\|\max_{t\in Q(0,\d)}\sum_{\a\in\Z_+^d\setminus  \{\bar{0}\}} \frac{|D^\a F_n(x)|}{\a!}|t_1|^{\a_1}\dots|t_d|^{\a_d}(\vp_n(x))^{|\a|_1}\bigg\|_X^q\\
        &\le \sum_{\a\in\Z_+^d\setminus  \{\bar{0}\}}\frac{\|(\vp_n(x))^{|\a|_1} D^\a F_n\|_X^q}{(\a!)^q}\d^{|\a|_1q}\\
        &\le \sum_{\a\in\Z_+^d\setminus  \{\bar{0}\}}\frac{((\d n^\b)^{|\a|_1} B_\a)^{q}}{(\a!)^q}\|F_n\|_X^q\\
        &= \sum_{\a\in\Z_+^d\setminus  \{\bar{0}\}}\(\frac{A^{|\a|_1}B_\a}{\a!}\)^q\|F_n\|_X^q=\eta\|F_n\|_X^q.
     \end{split}
  \end{equation}
 Using again the triangle inequality and~\eqref{mm1we}, we have
  \begin{equation*}
    \begin{split}
        \bigg\|\max_{t\in Q(0,\d)}|F_n(x+t\vp_n(x))|\bigg\|_X^q-\|F_n\|_X^q&\le \bigg\|\max_{t\in Q(0,\d)}|F_n(x+t\vp_n(x))|-|F_n(x)|\bigg\|_X^q\\
        &\le \bigg\|\max_{t\in Q(0,\d)}|F_n(x+t\vp_n(x))-F_n(x)|\bigg\|_X^q\\
        &\le \eta\|F_n\|_X^q,
     \end{split}
  \end{equation*}
which implies the left-hand side inequality in~\eqref{eqwe1}.
Similarly, we derive  the right-hand side inequality in~\eqref{eqwe1}, 
completing the proof. 
\end{proof}

\begin{remark}
 ${(i)}$ Assuming that $\frac{\partial}{\partial x_i} F_n \in \mathcal{F}_n$ for all $F_n \in \mathcal{F}_n$ and $i=1,\dots,d$, the Bernstein inequality given by \eqref{berwe} can be replaced by
   \begin{equation*}
    \bigg\|\vp_n^\ell\frac{\partial}{\partial x_i} F_n\bigg\|_X\le B_\ell' n^\beta\|\vp_n^{\ell-1} F_n\|_X,\quad i=1,\dots,d,\quad \ell\in \N,\quad F_n\in \mathcal{F}_n.
  \end{equation*}
  In this case, the assumption~\eqref{asumpeta} should be replaced by
  \begin{equation*}
  \eta'=\sum_{\a\in\Z_+^d\setminus  \{\bar{0}\}}\(\frac{A^{|\a|_1}}{\a!} \prod_{i=1}^d\prod_{j=1}^{\a_i}B_j'\)^q<1.
\end{equation*}

\noindent ${(ii)}$ If the Bernstein inequality has the following form
  \begin{equation*}
    \|\vp_n^{|\a|_1}D^\a F_n\|_X\le B_\a (\gamma(n))^{|\a|_1}\| F_n\|_X,\quad F_n\in \mathcal{F}_n,
  \end{equation*}
 for some function $\g$ (cf.~\eqref{berwe}), then in the assertion of Theorem~\ref{thmmwe}
 the constant $\frac{A}{n^\b}$ should be replaced by
 $\frac{A}{\g(n)}$. For example, in the case $\mathcal{F}_n=\mathcal{P}_n^d$ is the set of all $d$-variate algebraic polynomials of total degree $n$ and $X=L_p(S)$, where $0< p\le \infty$ and $S$ is a bounded convex domain, we have the Markov inequality, in which  $\gamma(n)=n^2$, see Remark~\ref{exPnS} below and also Subsection~\ref{secME} for special choices of $\g$.
\end{remark}

%

%

\begin{remark}\label{exPnS}
 Consider the case, where $\mathcal{F}_n=\mathcal{P}_n^d$. Ditzian~\cite{D92} established the following Bernstein-Markov type inequalities: for a bounded convex set $S\subset \R^d$, any direction $\xi\in \R^d$ $(|\xi|=1)$, $r\in \N$, and $0<p\le \infty$,
 \begin{equation*}
   \bigg\|\bigg(\frac{\partial}{\partial\xi}\bigg)^r P_n\bigg\|_{L_p(S)}\le C(p,r,S)n^{2r}\|P_n\|_{L_p(S)}
 \end{equation*}
 and
 \begin{equation*}
   \bigg\| \widetilde{\rho}(\cdot,\xi)^{r/2}\bigg(\frac{\partial}{\partial\xi}\bigg)^r P_n\bigg\|_{L_p(S)}\le C(p,r)n^r\|P_n\|_{L_p(S)},
 \end{equation*}
 where 
 $$
 \widetilde{\rho}(v,\xi):=\sup_{\l>0,\, v+\l\xi\in S}\rho(v,v+\lambda \xi)\sup_{\l<0,\, v+\l\xi\in S}\rho(v,v+\lambda \xi)
 $$
and $\rho$ is the Euclidean distance.  Thus, Theorem~\ref{thmmwe} can be applied in the case, where $\Omega$ is any convex domain and $\vp_n(x)\lesssim\widetilde{\rho}(x,e_i)^{1/2}+\tfrac1n$, $x\in S$, for each $i=1,\dots,d$. In particular, if $\Omega=[-1,1]$, we can take $\vp_n(x)=\sqrt{1-x^2}+\tfrac1n$. See~\cite{DP22} for further results in this direction.
\end{remark}

\subsection{General MZ-type inequalities in quasi-Banach spaces}


Note that in the results given in this section, $N$ can take infinite values. In particular, this is the case of entire functions of exponential type and $\Omega=\R^d$, see  Subsection \ref{efet}.

First, we consider translation invariant lattices.

\begin{theorem}\label{corMZinv}
Let $X$ be a translation invariant Banach lattice and  there exist $\b>0$ and $B>0$ such that
   \begin{equation*}
         \|D^\a F_n\|_X\le (Bn^\beta)^{|\a|_1}\| F_n\|_X,\quad \a\in\{0,1\}^d,\quad F_n\in \mathcal{F}_n.
   \end{equation*}
Assume that there exist  sets $\Omega_k$, $k=1,\dots,N$, where $N=N(n)$, such that
  \begin{equation}\label{mzinv0}
  1\le \sum_{k=1}^{N} \chi_{\Omega_k}(x)\le c_d\quad\text{for any}\quad x\in \Omega
  \end{equation}
  and, for any constant $A>0$ satisfying $\eta=(1+2AB)^d-1<1$ and each $k=1,2,\dots,N$,
  \begin{equation}\label{mzinv0+}
  \Omega_k\subset Q(x,\tfrac{A}{n^\b})\quad\text{for any} \quad x\in \Omega_k.
  \end{equation}
Then the following assertions hold:

\medskip

 \noindent {\rm (A)} For any $F_n\in \mathcal{F}_n$, we have 
  \begin{equation}\label{mzinv1}
    \frac{1}{c_d(1+\eta)}\bigg\|\sum_{k=1}^{N} \max_{t\in\Omega_k}|F_n(t)|\chi_{\Omega_k}\bigg\|_X\le \|F_n\|_X\le \frac{1}{1-\eta}\bigg\|\sum_{k=1}^{N} \min_{t\in\Omega_k}|F_n(t)|\chi_{\Omega_k}\bigg\|_X.
  \end{equation}

\noindent {\rm (B)} The left-hand side inequality in~\eqref{mzinv1} is valid  if we only assume that  the right-hand side of~\eqref{mzinv0} holds and~\eqref{mzinv0+} holds for any\footnote{That is, we do not require that $\eta<1$.} positive $A$.

\end{theorem}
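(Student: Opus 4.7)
The plan is to reduce Theorem~\ref{corMZinv} directly to Theorem~\ref{thmm2} by using the geometric properties of the covering $(\Omega_k)$. The key observation is that the hypothesis $\Omega_k\subset Q(x,\tfrac{A}{n^\beta})$ for every $x\in\Omega_k$ lets me sandwich the discretized quantities between the maximal and minimal functions that Theorem~\ref{thmm2} already controls.

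For the \emph{left-hand} inequality, I would fix $x\in\Omega$ and observe that for every index $k$ with $x\in\Omega_k$, the containment $\Omega_k\subset Q(x,\tfrac{A}{n^\beta})$ yields $\max_{t\in\Omega_k}|F_n(t)|\le\max_{t\in Q(x,A/n^\beta)}|F_n(t)|$. Summing against $\chi_{\Omega_k}(x)$ and using the upper bound $\sum_k\chi_{\Omega_k}(x)\le c_d$ from \eqref{mzinv0}, I get the pointwise estimate
\[
\sum_{k=1}^N \max_{t\in\Omega_k}|F_n(t)|\,\chi_{\Omega_k}(x)\le c_d\max_{t\in Q(x,A/n^\beta)}|F_n(t)|.
\]
Taking $\|\cdot\|_X$, applying the lattice property, and invoking the left-hand inequality of~\eqref{mmn} in Theorem~\ref{thmm2} gives the claimed bound with constant $c_d(1+\eta)$.

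For the \emph{right-hand} inequality, the inclusion $\Omega_k\subset Q(x,\tfrac{A}{n^\beta})$ now works in the opposite direction: $\min_{t\in\Omega_k}|F_n(t)|\ge\min_{t\in Q(x,A/n^\beta)}|F_n(t)|$. Combining this with the covering lower bound $\sum_k\chi_{\Omega_k}(x)\ge 1$ from \eqref{mzinv0}, I obtain the pointwise inequality
\[
\sum_{k=1}^N \min_{t\in\Omega_k}|F_n(t)|\,\chi_{\Omega_k}(x)\ge \min_{t\in Q(x,A/n^\beta)}|F_n(t)|.
\]
Passing to the $X$-norm and applying the right-hand inequality of~\eqref{mmn} then furnishes the upper estimate with constant $\tfrac{1}{1-\eta}$.

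Finally, part~(B) is immediate: the argument for the left-hand inequality never uses $\eta<1$, it only uses part~(A) of Theorem~\ref{thmm2} in its upper bound form, and since Theorem~\ref{thmm2}(B) supplies the same bound for arbitrary $A>0$, the restriction on $A$ can simply be dropped. There is no real obstacle here; the only point that must be checked carefully is the direction of the inequalities in the max/min comparison and that the multiplicity constant $c_d$ enters precisely on the side where $\max$ is summed, not on the side where $\min$ is summed.
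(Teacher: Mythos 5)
Your proposal is correct and follows essentially the same route as the paper: both reduce the statement to Theorem~\ref{thmm2} via the pointwise comparisons $\max_{t\in\Omega_k}|F_n(t)|\le\max_{t\in Q(x,A/n^\beta)}|F_n(t)|$ and $\min_{t\in\Omega_k}|F_n(t)|\ge\min_{t\in Q(x,A/n^\beta)}|F_n(t)|$ for $x\in\Omega_k$, with the multiplicity bound $c_d$ entering only on the max side and the lower covering bound only on the min side. Your observation for part~(B) — that the left-hand estimate uses only the upper bound in~\eqref{mzinv0} together with Theorem~\ref{thmm2}(B) — matches the paper as well.
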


\begin{proof}
 Denote $\d:=\frac{A}{n^\b}$. Applying the right-hand side inequality in Theorem~\ref{thmm2}, we have
  \begin{equation*}
    \begin{split}
       (1-\eta)\|F_n\|_X&\le \bigg\|\min_{{t\in Q(x,\d)}}|F_n(t)|\bigg\|_X\le \bigg\|\sum_{k=1}^N \min_{{t\in Q(x,\d)}}|F_n(t)| \chi_{\Omega_k}\bigg\|_X\\
        &\le \bigg\|\sum_{k=1}^N \min_{t\in \Omega_k}|F_n(t)|\chi_{\Omega_k}\bigg\|_X.
     \end{split}
  \end{equation*}
 Similarly, using the left-hand side inequality in Theorem~\ref{thmm2}, we obtain
   \begin{equation*}
    \begin{split}
       (1+\eta)\|F_n\|_X&\ge \bigg\|\max_{{t\in Q(x,\d)}}|F_n(t)|\bigg\|_X\ge \frac1{c_d}\bigg\|\sum_{k=1}^N \max_{{t\in Q(x,\d)}}|F_n(t)|\chi_{\Omega_k}\bigg\|_X\\
        &\ge \frac1{c_d}\bigg\|\sum_{k=1}^N \max_{t\in \Omega_k}|F_n(t)|\chi_{\Omega_k}\bigg\|_X.
     \end{split}
  \end{equation*}
\end{proof}

Second, we present MZ inequalities just assuming that $X$ is a quasi-Banach lattice. The proof is similar to the one  for Theorem~\ref{corMZinv}.

\begin{theorem}\label{corthmmwe}
Let $X=X(\mu, \Omega; q)$, $0<q\le 1$, be a  quasi-Banach function lattice and
there exist $\b>0$ and a nonnegative function $\vp_n$ on $\Omega$  such that for all $\a\in\Z_+^d$
  \begin{equation*}
    \|\vp_n^{|\a|_1}D^\a F_n\|_X\le B_\a n^{\beta|\a|_1}\| F_n\|_X,\quad F_n\in \mathcal{F}_n,
  \end{equation*}
 with some $B_\a>0$. Assume that
there exist sets $\Omega_k$ and $\Omega_k^*$, $k=1,\dots,N$, where $N=N(n)$, such that
   \begin{equation}\label{mzwe0}
  1\le \sum_{k=1}^N \chi_{\Omega_k}(x)\le c_d\quad\text{for any}\quad x\in \Omega
  \end{equation}
  and, for any constant $A>0$ satisfying $\eta=\sum_{\a\in\Z_+^d,\, \a\neq 0}\(\frac{A^{|\a|_1}B_\a}{\a!}\)^{q}<1$ and for each $k=1,2,\dots,N$,
  \begin{equation}\label{zvv}
   \Omega_k^*\subset Q\(x,\tfrac{A}{n^\b}\vp_n(x)\)\quad\text{for any}\quad x\in \Omega_k.
  \end{equation}
Then the following assertions  hold:

\smallskip

  \noindent {\rm (A)} For any $F_n\in \mathcal{F}_n$, we have
  \begin{equation}\label{mzwe}\quad
    \frac{1}{c_d\(1+\eta\)^{1/q}} \bigg\|\sum_{k=1}^N \max_{y\in\Omega_k^*}|F_n(y)|\chi_{\Omega_k}\bigg\|_X\le \|F_n\|_X\le \frac{1}{\(1-\eta\)^{1/q}} \bigg\|\sum_{k=1}^N \min_{y\in\Omega_k^*}|F_n(y)|\chi_{\Omega_k}\bigg\|_X.
  \end{equation}

 \noindent {\rm (B)} The left-hand side inequality in~\eqref{mzwe} is valid  if we only assume that  the right-hand side of~\eqref{mzwe0} holds and~\eqref{zvv} holds for any positive $A$.

\end{theorem}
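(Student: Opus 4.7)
The plan is to mimic exactly the two–line argument that produces Theorem~\ref{corMZinv} from Theorem~\ref{thmm2}, only now feeding in the weighted statement of Theorem~\ref{thmmwe}. So I would first fix $A>0$ with $\eta=\sum_{\a\ne 0}(A^{|\a|_1}B_\a/\a!)^{q}<1$, and apply Theorem~\ref{thmmwe} to the same $F_n\in\mathcal{F}_n$ to obtain
\begin{equation*}
  \frac{1}{(1+\eta)^{1/q}}\bigg\|\max_{t\in Q(x,A\vp_n(x)/n^\beta)}|F_n(t)|\bigg\|_X
  \le \|F_n\|_X \le
  \frac{1}{(1-\eta)^{1/q}}\bigg\|\min_{t\in Q(x,A\vp_n(x)/n^\beta)}|F_n(t)|\bigg\|_X.
\end{equation*}
After that, the sets $\Omega_k,\Omega_k^*$ are only used to turn the max/min over the cube into a sum of max/min over $\Omega_k^*$ weighted by $\chi_{\Omega_k}$.

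For the upper bound in \eqref{mzwe}, condition \eqref{zvv} gives $\Omega_k^*\subset Q(x,\tfrac{A}{n^\beta}\vp_n(x))$ whenever $x\in\Omega_k$; since $\Omega_k^*\subset Q$, the min over $Q$ is no larger than the min over $\Omega_k^*$. Combined with $1\le\sum_k\chi_{\Omega_k}$ from \eqref{mzwe0}, this yields the pointwise inequality
\begin{equation*}
  \min_{t\in Q(x,A\vp_n(x)/n^\beta)}|F_n(t)|
  \le \sum_{k=1}^{N}\min_{y\in\Omega_k^*}|F_n(y)|\,\chi_{\Omega_k}(x),
\end{equation*}
and the lattice property of $X$ transfers it to the $X$-quasi-norm.

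For the lower bound, since $\Omega_k^*\subset Q(x,\tfrac{A}{n^\beta}\vp_n(x))$ for $x\in\Omega_k$, the max over $\Omega_k^*$ is at most the max over $Q$, and the covering bound $\sum_k\chi_{\Omega_k}\le c_d$ from \eqref{mzwe0} gives the pointwise estimate
\begin{equation*}
  \sum_{k=1}^{N}\max_{y\in\Omega_k^*}|F_n(y)|\,\chi_{\Omega_k}(x)
  \le c_d\max_{t\in Q(x,A\vp_n(x)/n^\beta)}|F_n(t)|.
\end{equation*}
Combining with the left-hand inequality of Theorem~\ref{thmmwe} produces the upper factor $c_d(1+\eta)^{1/q}$ in \eqref{mzwe}. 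Part~(B) is handled identically by invoking assertion~(B) of Theorem~\ref{thmmwe}, which dispenses with the condition $\eta<1$ in the left-hand estimate.

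There is no genuine obstacle here; the only mildly delicate point is the bookkeeping with the $q$-quasi-norm (the $q$-subadditivity already lives inside the cited Theorem~\ref{thmmwe}, so at the level of the present argument it suffices that $\|\cdot\|_X$ is monotone on the lattice, which is property~4 of the space). Everything else is a pointwise comparison followed by a monotonicity step, so the proof fits in a few lines essentially copying that of Theorem~\ref{corMZinv}.
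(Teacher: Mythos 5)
Your proposal is correct and follows exactly the route the paper intends: the paper gives no separate argument for this theorem, stating only that ``the proof is similar to the one for Theorem~\ref{corMZinv},'' i.e.\ apply the weighted max/min estimate of Theorem~\ref{thmmwe} and then convert between the cube $Q(x,\tfrac{A}{n^\b}\vp_n(x))$ and the sets $\Omega_k^*$ via the pointwise comparisons supplied by~\eqref{mzwe0} and~\eqref{zvv}, together with monotonicity of the lattice quasi-norm. Both your pointwise inequalities and the handling of part~(B) through assertion~(B) of Theorem~\ref{thmmwe} match what the paper does for Theorem~\ref{corMZinv}.
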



\begin{remark}
\begin{itemize}
    \item[$(i)$]  The presence  of the weight function in the Bernstein inequality is essential  for deriving the optimal MZ inequalities, see, for example,  Section~\ref{seca}.

    \item[$(ii)$]
     In the case $\vp_n\equiv 1$, we can take $\Omega_k^*=\Omega_k$. Then Theorem~\ref{corthmmwe} holds if condition~\eqref{zvv} is replaced by 
  $$
  {\rm diam}\, \Omega_k\le \tfrac{A}{n^\b}.
  $$

\end{itemize}

\end{remark}

%

\section{MZ-type inequalities with minimal number of nodes}\label{secOpt}

Let $\Omega$ be a non-empty domain in $\R^d$ and let $\mu$ be a positive $\s$-finite measure on $\Omega$.
Consider the inner product
\begin{equation*}
  (f,g)=
  \int_\Omega f(x)\overline{g(x)}d\mu(x),\quad f,g\in L_{2}(\Omega,\mu).
\end{equation*}
%
Let $(\psi_k)_{k=0}^\infty$ with $\psi_0=1$ be an orthonormal system of functions with respect to this inner product, i.e.,
$$
(\psi_k,\psi_j)=\left\{
                \begin{array}{ll}
                  1, & \hbox{$k=j$,} \\
                  0, & \hbox{$k\neq j$.}
                \end{array}
              \right.
$$
The Fourier series of $f$ and the corresponding $n$th Fourier partial sum are defined by
$$
f\sim \sum_{k=0}^\infty (f,\psi_k)\psi_k\quad\text{and}\quad S_n f=\sum_{k=0}^n(f,\psi_k)\psi_k,
$$
respectively.
We denote by $\Psi_n$ a finite-dimensional subset of $X$ spanned by $(\psi_k)_{k=0}^n$.
(In this section, we assume that $n\in \N$.)


\subsection{MZ-type inequalities under quadrature formulas and Bernstein's inequalities}

The main result in this section is the following general theorem, which in particular cases of trigonometric and algebraic polynomials provides MZ-type inequalities with {\it minimal} number of nodes, see Theorems~\ref{thF} and~\ref{algw} below.

\begin{theorem}\label{thOr2}
  Let $X=X(\Omega,\mu)$ be a Banach lattice.
  Assume that, for a given $n\in \N$, the following conditions hold:

  \begin{enumerate}

    \item[$1)$]  there exist $(x_k)_{k=1}^N$ and $(\mu_k)_{k=1}^N$, $\mu_k\ge 0$, such that
     \begin{equation*}
        \int_\Omega P_{n}(x)Q_n(x)d\mu(x)=\sum_{k=1}^N P_{n}(x_k)Q_n(x_k)\mu_k\quad\text{for all}\quad P_{n},Q_n\in \Psi_n;
     \end{equation*}

    \item[$2)$] there exist $(\Omega_k)_{k=1}^N\subset \Omega$ and absolute constants $c_1$ and $c_2$ such that
    $$
    \mu_k\le c_1\int_{\Omega_k}d\mu(x),\quad k=1,\dots,N,\quad{and}\quad
     \sum_{k=1}^N\chi_{\Omega_k}(x)\le c_2\quad\text{for all}\quad x\in \Omega;
     $$

    \item[$3)$] there exists $C_1>0$ such that
    $$
    \|S_n g\|_{X}\le C_1\|g\|_{X},\quad g\in X;
    $$
    \item[$4)$] 
  there exist $\b>0$ and a non-negative function $\vp_n$ on $\Omega$  such that for all $\a\in\Z_+^d$
  \begin{equation*}
    \|\vp_n^{|\a|_1}D^\a P_n\|_Y\le B_\a n^{\beta|\a|_1}\| P_n\|_Y,\quad P_n\in \Psi_n,\quad Y\in \{X,X'\},
  \end{equation*}
 with $B_\a>0;$

   \item[$5)$] there exists $A>0$ such that $\eta=\sum_{\a\in\Z_+^d\setminus\{0\}}\frac{A^{|\a|_1}B_\a}{\a!}<\infty$ and, for each $k=1,\dots,N$,
   \begin{equation*}
     x_k\in Q\(x,\tfrac{A}{n^\b}\vp_n(x)\)\quad\text{for all}\quad x\in \Omega_k.
   \end{equation*}
    \end{enumerate}
Then, for all $P_{n}\in \Psi_n$, we have
         \begin{equation*}
      \frac{1}{c_2(1+\eta)}\bigg\|\sum_{k=1}^N |P_n(x_k)|\chi_{\Omega_k}\bigg\|_{X}\le\|P_n\|_{X}\le{c_1 c_2C_1(1+\eta)}\bigg\|\sum_{k=1}^N |P_n(x_k)|\chi_{\Omega_k}\bigg\|_{X}.
     \end{equation*}
\end{theorem}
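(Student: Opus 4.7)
The proof splits naturally into the two inequalities.

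The \emph{left} inequality (upper bound for the discrete sum) is an immediate application of the left half of Theorem~\ref{corthmmwe}(B) to $F_n=P_n\in\Psi_n$ in the space $X$ with $q=1$ and the singleton choice $\Omega_k^{*}=\{x_k\}$. Indeed, condition 5) is exactly \eqref{zvv} with this choice, condition 4) (restricted to $X$) provides the weighted Bernstein estimate \eqref{berwe}, and the bounded-overlap part of condition 2) gives \eqref{mzwe0} with $c_d=c_2$. Since $\max_{y\in\Omega_k^{*}}|P_n(y)|=|P_n(x_k)|$, this produces the required estimate with constant $c_2(1+\eta)$.

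For the \emph{right} inequality (lower bound for the discrete sum) I would argue by duality. Since $X$ is a Banach function lattice with $X\subset L_1$,
$$\|P_n\|_X=\sup_{\|g\|_{X'}\le 1}\Big|\int_\Omega P_n(x)\overline{g(x)}\,d\mu(x)\Big|.$$
Fix such a $g$. Because $S_n$ is the orthogonal projection of $L_2(\Omega,\mu)$ onto $\Psi_n$ and $P_n\in\Psi_n$, we may replace $g$ by $S_n g\in\Psi_n$ without changing the integral: $(P_n,g)_{L_2}=(S_nP_n,g)_{L_2}=(P_n,S_ng)_{L_2}$. Now both factors lie in $\Psi_n$, so the quadrature in condition 1) turns the integral into $\sum_{k=1}^{N}P_n(x_k)\overline{S_ng(x_k)}\mu_k$, whose modulus is at most $\sum_k|P_n(x_k)||S_ng(x_k)|\mu_k$. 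Using $\mu_k\le c_1\int_{\Omega_k}d\mu$ from 2) together with the pointwise inequality
$$\sum_k a_kb_k\chi_{\Omega_k}(x)\le \Big(\sum_j a_j\chi_{\Omega_j}(x)\Big)\Big(\sum_k b_k\chi_{\Omega_k}(x)\Big),\qquad a_k,b_k\ge 0,$$
(valid because the expansion on the right differs from the left only by nonnegative cross terms) and Hölder's inequality for the pair $X,X'$, I would then pass to
$$\sum_k|P_n(x_k)||S_ng(x_k)|\mu_k\le c_1\Big\|\sum_k|P_n(x_k)|\chi_{\Omega_k}\Big\|_X\,\Big\|\sum_k|S_ng(x_k)|\chi_{\Omega_k}\Big\|_{X'}.$$

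The $X'$-norm on the right is controlled by applying the left inequality of Theorem~\ref{corthmmwe}(B) once more, this time with $X$ replaced by $X'$ and $F_n=S_ng\in\Psi_n$: condition 4) is stated for both $X$ and $X'$, and conditions 2), 5) are space-independent, so the hypotheses are met and we obtain $\big\|\sum_k|S_ng(x_k)|\chi_{\Omega_k}\big\|_{X'}\le c_2(1+\eta)\|S_ng\|_{X'}$. Finally, the boundedness of $S_n$ on $X'$ with the same constant $C_1$ as in condition 3) follows by duality from the self-adjointness of $S_n$ in the $L_2$-pairing that defines $X'$: for $\|f\|_X\le 1$ one has $\int (S_ng)\bar f\,d\mu=\int g\,\overline{S_nf}\,d\mu$, and condition 3) bounds $\|S_nf\|_X\le C_1$. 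Taking the supremum over $\|g\|_{X'}\le 1$ yields the asserted upper bound with constant $c_1c_2C_1(1+\eta)$.

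The main point requiring care is the transfer of the Fourier-partial-sum bound from $X$ to its associate space $X'$; although standard, it must be derived directly from the definition of $X'$ because condition 3) is stated only in $X$. All remaining steps are either direct invocations of Theorem~\ref{corthmmwe} or routine manipulations with the quadrature formula and Hölder's inequality.
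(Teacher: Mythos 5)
Your proposal is correct and follows essentially the same route as the paper: the left inequality is Theorem~\ref{corthmmwe}(B) applied in $X$ with $\Omega_k^{*}=\{x_k\}$, and the right inequality is exactly the duality--quadrature--H\"older argument of Proposition~\ref{prmz} combined with Theorem~\ref{corthmmwe}(B) applied in $X'$. Your only (welcome) addition is the explicit justification of $\|S_ng\|_{X'}\le C_1\|g\|_{X'}$ via the self-adjointness of $S_n$ in the $L_2$-pairing, a step the paper uses in~\eqref{prpr2} without comment.
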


In many cases, the above conditions can be simplified, see, e.g., Theorem~\ref{algw}, in which $\Psi_n=\mathcal{P}_n$ is the set of all algebraic polynomials of degree at most $n$.

To prove Theorem~\ref{thOr2}, we first establish the following result.

\begin{proposition}\label{prmz}
Assume that under the conditions 1), 2), and 3) of Theorem~\ref{thOr2}, there exists a constant $C_2$ independent of $n$ such that
%
%
%
      \begin{equation}\label{pre2}
        \bigg\|\sum_{k=1}^N |P_n(x_k)|\chi_{\Omega_k}\bigg\|_{X'}\le C_2\|P_n\|_{X'},\quad P_n\in \Psi_{n}.
     \end{equation}
     Then
    \begin{equation*}
        \|P_n\|_{X}\le c_1C_1C_2\bigg\|\sum_{k=1}^N |P_n(x_k)|\chi_{\Omega_k}\bigg\|_{X},\quad P_n\in \Psi_{n}.
     \end{equation*}
\end{proposition}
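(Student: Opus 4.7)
The plan is to argue by duality. Since $X$ is a Banach function lattice with $X\subset L_1$, we have the norming identity $\|P_n\|_X=\sup_{\|g\|_{X'}\le 1}\bigl|\int_\Omega P_n\overline{g}\,d\mu\bigr|$, so it suffices to bound the right-hand side uniformly in $g$ with $\|g\|_{X'}\le 1$. The strategy has three parts: (a) replace $g$ by $S_n g$ using that $P_n$ is reproduced by $S_n$; (b) convert the resulting integral into a finite sum via the quadrature from condition~1), and pass to an $X$--$X'$ estimate for simple functions supported on the $\Omega_k$; (c) apply hypothesis \eqref{pre2} to $S_n g\in\Psi_n$ and dualize condition~3) to the associate space.

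For step~(a), since $P_n\in\Psi_n$ we have $S_n P_n=P_n$, and $S_n$ is self-adjoint as the orthogonal projection onto $\Psi_n$; hence $(P_n,g)=(S_n P_n,g)=(P_n,S_n g)$. For step~(b), both $P_n$ and $S_n g$ lie in $\Psi_n$, so condition~1) yields
$$\int_\Omega P_n\overline{S_n g}\,d\mu=\sum_{k=1}^N P_n(x_k)\overline{S_n g(x_k)}\,\mu_k,$$
and using $\mu_k\le c_1\int_{\Omega_k}d\mu$ from condition~2), the modulus of this sum is at most
$$c_1\int_\Omega\sum_{k=1}^N |P_n(x_k)|\,|S_n g(x_k)|\,\chi_{\Omega_k}(x)\,d\mu(x).$$
The overlap of the sets $\Omega_k$ would normally obstruct a direct duality estimate, but for nonnegative sequences $(a_k),(b_k)$ one has the pointwise majorization
$$\sum_{k}a_k b_k\chi_{\Omega_k}(x)\le\Bigl(\sum_{k}a_k\chi_{\Omega_k}(x)\Bigr)\Bigl(\sum_{j}b_j\chi_{\Omega_j}(x)\Bigr),$$
since the cross terms on the right are nonnegative. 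Combined with the Hölder-type bound $\int f h\,d\mu\le\|f\|_X\|h\|_{X'}$ for nonnegative $f,h$, the preceding expression is at most
$$c_1\Bigl\|\sum_{k=1}^N |P_n(x_k)|\chi_{\Omega_k}\Bigr\|_X\Bigl\|\sum_{k=1}^N |S_n g(x_k)|\chi_{\Omega_k}\Bigr\|_{X'}.$$

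For step~(c), hypothesis \eqref{pre2} applied to $S_n g\in\Psi_n$ gives $\bigl\|\sum_k |S_n g(x_k)|\chi_{\Omega_k}\bigr\|_{X'}\le C_2\|S_n g\|_{X'}$. Finally, self-adjointness of $S_n$ with respect to $(\cdot,\cdot)$ transfers the operator bound in condition~3) to the associate space: $\|S_n g\|_{X'}\le C_1\|g\|_{X'}\le C_1$. Assembling everything and taking the supremum over $\|g\|_{X'}\le 1$ yields
$$\|P_n\|_X\le c_1 C_1 C_2\Bigl\|\sum_{k=1}^N |P_n(x_k)|\chi_{\Omega_k}\Bigr\|_X,$$
which is the desired inequality. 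The main subtlety is handling the overlap of the $\Omega_k$, resolved by the pointwise factorization above; a secondary point requiring care is that condition~3) as stated concerns only $X$, so one must use self-adjointness of $S_n$ to obtain the analogous bound on $X'$.
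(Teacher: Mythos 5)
Your proof is correct and follows essentially the same route as the paper's: dualize $\|P_n\|_X$, replace $g$ by $S_ng$ via orthogonality, discretize with the quadrature formula, majorize by the product of the two simple functions, and apply H\"older together with \eqref{pre2} and condition~3). The one place you are more careful than the paper is in justifying $\|S_ng\|_{X'}\le C_1\|g\|_{X'}$ by self-adjointness of $S_n$ (the paper applies condition~3) in $X'$ without comment), which is a legitimate and welcome clarification.
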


\begin{proof}
  By duality, orthogonality, conditions 1) and 2), and H\"older inequality,  we obtain
  \begin{equation}\label{prpr1}
    \begin{split}
       \|P_n\|_{X}&=\sup_{\|g\|_{X'}\le 1}\int_\Omega P_n(x)\overline{g(x)}d\mu(x)=\sup_{\|g\|_{X'}\le 1}\int_\Omega P_n(x)\overline{S_n g(x)}d\mu(x)\\
       &=\sup_{\|g\|_{X'}\le 1}\sum_{k=1}^N P_n(x_k)\overline{S_n g(x_k)}\mu_k\le\sup_{\|g\|_{X'}\le 1}\sum_{k=1}^N |P_n(x_k){S_n g(x_k)}|\mu_k\\
       &\le c_1\sup_{\|g\|_{X'}\le 1}\int_{\Omega}\(\sum_{k=1}^N |P_n(x_k)|\chi_{\Omega_k}(x)\)\(\sum_{\ell=1}^N |S_n g(x_\ell)|\chi_{\Omega_\ell}(x)\)d\mu(x)\\
       &\le c_1\sup_{\|g\|_{X'}\le 1} \bigg\|\sum_{k=1}^N |P_n(x_k)|\chi_{\Omega_k}\bigg\|_{X}\bigg\|\sum_{\ell=1}^N |S_n g(x_\ell)|\chi_{\Omega_\ell}\bigg\|_{X'}.
     \end{split}
  \end{equation}
  Next, applying \eqref{pre2} and condition 3), we have
  \begin{equation}\label{prpr2}
    \begin{split}
       \bigg\|\sum_{\ell=1}^N |S_n g(x_\ell)|\chi_{\Omega_\ell}\bigg\|_{X'}\le C_2\|S_n g\|_{X'}\le C_1C_2\|g\|_{X'}.
     \end{split}
  \end{equation}
  Finally, combining~\eqref{prpr1} and~\eqref{prpr2}, we complete the proof.
\end{proof}



\begin{proof}[Proof of Theorem~\ref{thOr2}]
  According to Proposition~\ref{prmz}, it is enough to verify that
       \begin{equation}\label{pre2WW}
        \bigg\|\sum_{k=1}^N |P_n(x_k)|\chi_{\Omega_k}\bigg\|_{X'}\le C_2\|P_n\|_{X'}\quad\text{for all}\quad P_{n}\in \Psi_n.
     \end{equation}
 Applying Theorem~\ref{corthmmwe}(B) for the space $X'$ and  $\Omega_k^*=\{x_k\}$, we see that~\eqref{pre2WW} indeed holds.
\end{proof}


\subsection{Approach to MZ inequalities using generalized translation}

In the case when $X$ is an Orlicz space, we can apply a different approach to establish~\eqref{pre2}.
For the definitions of the Orlicz space $L_\Phi$ and the corresponding norm $\|\cdot\|_\Phi$, see Appendix~\ref{Afs}.


We define the generalized translation operator $T^s$, $s\in \Omega$, by
$$
T^s f(x)\sim \sum_{k=1}^\infty (f,\psi_k)\psi_k(x)\psi_k(s)
$$
and assume that there exists a constant $C>0$ such that, for all $s\in \Omega$ and $f\in L_{1}(\Omega,\mu)$,
\begin{equation}\label{tr}
  \| T^s f\|_{L_{1}(\Omega,\mu)}\le C\|f\|_{L_{1}(\Omega,\mu)}.
\end{equation}
Note that in most cases
$$
T^s f(x)=\int_\Omega f(t)K(x,s,t)d\mu(t),
$$
where $K$ is a kernel, which possesses the following properties:
\begin{enumerate}
    \item[$1)$] $K(x,s,t)\ge 0$;
  \item[$2)$] $\int_\Omega K(x,s,t)d\mu(t)=1$;
  \item[$3)$] $K(x,s,t)=K(s,x,t)=K(t,s,x)$.
\end{enumerate}
We will call $T^s$ admissible if the properties 1)-3) hold. In this case \eqref{tr} is clearly valid.

\begin{theorem}\label{thOr}
Let $L_{\Phi}=L_{\Phi}(\Omega,\mu)$ be an Orlicz space. Assume that
conditions 1)--3) of Theorem~\ref{thOr2} hold with $X=L_{\Phi}$ and
 $(\Omega_k)_{k=1}^N$ such that $\Omega_k\cap \Omega_j=\emptyset$, $k\neq j$. Assume, additionally, that
there exists a de la Vall\'ee Poussin type kernel $V_n\in \Psi_{2n}$ such that $(V_n,\psi_k)=1$, $k=0,\dots,n$,
   \begin{equation}\label{vp1}
     \sup_n \|V_{n}\|_{L_{1}(\Omega,\mu)}<\infty,
   \end{equation}
   and
\begin{equation}\label{VVV}
  \sup_{t\in \Omega}\bigg\| \sum_{k=1}^N |T^tV_n(x_k)|\chi_{\Omega_k}\bigg\|_{L_1(\Omega,\mu)}\le C_2,
\end{equation}
where the constant $C_2$ is independent of $n$ and $N$.
%
Then, for all $P_{n}\in \Psi_n$, we have
     \begin{equation*}
           \|P_n\|_{{\Phi}}\asymp\bigg\|\sum_{k=1}^N |P_n(x_k)|\chi_{\Omega_k}\bigg\|_{{\Phi}}.
     \end{equation*}
%
\end{theorem}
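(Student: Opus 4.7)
The plan is to invoke Proposition~\ref{prmz} for the upper bound $\|P_n\|_\Phi \lesssim \|\sum_k|P_n(x_k)|\chi_{\Omega_k}\|_\Phi$ and to handle the reverse inequality by a direct pointwise argument. Both halves reduce to a single task: establishing the uniform-in-$n$ boundedness, on the Orlicz space $L_\Phi$ and on its associate $L_\Phi'$, of the positive linear integral operator
$$\mathcal{K}f(x):=\sum_{k=1}^N \chi_{\Omega_k}(x)\int_\Omega |T^{x_k}V_n(y)|\,|f(y)|\,d\mu(y),$$
whose non-negative kernel is $\widetilde K(x,y)=\sum_{k=1}^N \chi_{\Omega_k}(x)\,|T^{x_k}V_n(y)|$.

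First, I would establish the reproducing identity $P_n(s)=\int_\Omega P_n(y)\,T^sV_n(y)\,d\mu(y)$ for every $P_n\in \Psi_n$; this follows at once from the Fourier expansion $T^sV_n(y)=\sum_{k=0}^{2n}(V_n,\psi_k)\,\psi_k(y)\,\psi_k(s)$, the orthonormality of $(\psi_k)$, and the Vall\'ee Poussin property $(V_n,\psi_j)=1$ for $j\le n$. Setting $s=x_k$, taking absolute values, multiplying by $\chi_{\Omega_k}$, summing in $k$, and using the disjointness of the $\Omega_k$ then yields the key pointwise estimate
$$\sum_{k=1}^N |P_n(x_k)|\,\chi_{\Omega_k}(x)\le (\mathcal{K}|P_n|)(x),\quad x\in\Omega.$$
Next I would verify the two Schur-type bounds on $\widetilde K$: the row integral $\sup_x \int_\Omega \widetilde K(x,y)\,d\mu(y)$ is controlled by $C\sup_n \|V_n\|_{L_1(\Omega,\mu)}$, by combining the $L_1$-contractivity \eqref{tr} with the uniform $L_1$-bound \eqref{vp1}; the column integral $\sup_y\int_\Omega \widetilde K(x,y)\,d\mu(x)$ is bounded by $C_2$ directly from \eqref{VVV}, together with the symmetry $T^{x_k}V_n(y)=T^yV_n(x_k)$ that is evident from the Fourier expansion of the generalized translation.

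These two Schur bounds immediately give uniform-in-$n$ boundedness of $\mathcal{K}$ on $L_1(\Omega,\mu)$ and on $L_\infty(\Omega,\mu)$. Since the Orlicz spaces $L_\Phi$ and $L_\Phi'$ are rearrangement-invariant Banach function spaces, the Calder\'on--Mityagin interpolation theorem upgrades these estimates to uniform boundedness of $\mathcal{K}$ on $L_\Phi$ and on $L_\Phi'$. Combining the $L_\Phi$-bound with the pointwise estimate yields the easy direction $\|\sum_k |P_n(x_k)| \chi_{\Omega_k}\|_\Phi \lesssim \|P_n\|_\Phi$; and the $L_\Phi'$-bound, applied to the same pointwise inequality with an arbitrary $Q_n\in \Psi_n$ in place of $P_n$, is exactly the hypothesis \eqref{pre2} of Proposition~\ref{prmz}, which then furnishes the reverse estimate. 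The principal obstacle I foresee is precisely this Orlicz-interpolation step: the kernel estimates only see $L_1$ and $L_\infty$ and must be transferred to a uniform $L_\Phi$-bound without any extra restriction on the Young function, which requires invoking the full strength of the r.i.\ interpolation property of Orlicz spaces rather than any elementary Schur-type argument.
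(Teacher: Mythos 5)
Your proposal is correct, and both halves end up where the paper ends up (the hard direction via Proposition~\ref{prmz}, the easy direction and the hypothesis~\eqref{pre2} via a uniform bound for the discretization map in $L_\Phi$ and in $(L_\Phi)'$), but the route to that uniform bound is genuinely different. The paper's Proposition~\ref{prmz2} works directly at the modular level: it uses the reproducing identity $P_n(x_k)=\int_\Omega P_n\,T^{x_k}V_n\,d\mu$, normalizes $d\tau=|T^{x_k}V_n|\,d\mu$ using~\eqref{tr} and~\eqref{vp1}, applies Jensen's inequality to the convex $\Phi$, and then sums against~\eqref{VVV}; this is elementary, self-contained, and yields explicit constants. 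You instead package the same kernel into the positive operator $\mathcal{K}$, verify the two Schur bounds (row integrals from~\eqref{tr} and~\eqref{vp1}, column integrals from~\eqref{VVV} and the symmetry $T^{x_k}V_n(y)=T^yV_n(x_k)$), and upgrade the resulting $L_1$- and $L_\infty$-bounds to $L_\Phi$ and $(L_\Phi)'$ by the Calder\'on--Mityagin theorem. What your route buys is generality and symmetry: it treats $L_\Phi$ and its associate in a single stroke and would prove the theorem verbatim for any rearrangement-invariant Banach function lattice, not just Orlicz spaces. What it costs is the reliance on a heavier interpolation theorem, which strictly speaking requires the underlying measure space to be resonant (or, for Orlicz spaces, a direct verification of the Hardy--Littlewood--P\'olya majorization principle via convexity of $\Phi$ --- which is, in essence, exactly the Jensen computation the paper performs). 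It is worth noting that the two arguments are close cousins: Jensen's inequality applied to the normalized kernel measure is the standard direct proof of the Schur test in Orlicz spaces, so the paper's proof can be read as your interpolation step carried out by hand for the specific couple at issue.
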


\begin{proof}
  The proof follows from Propositions~\ref{prmz} and~\ref{prmz2}.
\end{proof}


%


\begin{proposition}\label{prmz2}
  Let $L_{\Phi}=L_{\Phi}(\Omega,\mu)$ be an Orlicz space.
  Assume that the points $(x_k)_{k=1}^N$, the sets $(\Omega_k)_{k=1}^N$, and $V_n$ are the same as in Theorem~\ref{thOr}.
%
Then, for all $P_{n}\in \Psi_n$, we have
     \begin{equation}\label{pre2o}
        \bigg\|\sum_{k=1}^N |P_n(x_k)|\chi_{\Omega_k}\bigg\|_{{\Phi}}\le C_3\|P_n\|_{{\Phi}}.
     \end{equation}

\end{proposition}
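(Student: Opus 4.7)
The plan is to combine the reproducing property of $V_n$ with a Schur-type row/column estimate on Orlicz spaces. First, for any $P_n\in\Psi_n$ I would expand $P_n=\sum_{k=0}^{n}(P_n,\psi_k)\psi_k$, use the hypothesis $(V_n,\psi_k)=1$ for $k=0,\dots,n$ together with the definition of $T^s$, and deduce the reproducing identity
\[
P_n(s)=\int_\Omega P_n(t)\,\overline{T^s V_n(t)}\,d\mu(t),\qquad s\in\Omega.
\]
Specializing at $s=x_k$ and taking moduli gives $|P_n(x_k)|\le \int_\Omega |P_n(t)|\,|T^{x_k}V_n(t)|\,d\mu(t)$. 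Because the sets $\Omega_k$ are pairwise disjoint, for each $y\in\Omega$ at most one indicator $\chi_{\Omega_k}(y)$ is nonzero, so one obtains the pointwise bound
\[
\sum_{k=1}^{N}|P_n(x_k)|\,\chi_{\Omega_k}(y)\;\le\;\int_\Omega |P_n(t)|\,K_n(t,y)\,d\mu(t),\qquad K_n(t,y):=\sum_{k=1}^{N}\chi_{\Omega_k}(y)\,|T^{x_k}V_n(t)|.
\]
Thus the proposition reduces to proving that the positive integral operator with kernel $K_n$ is bounded on $L_{\Phi}(\Omega,\mu)$ uniformly in $n$.

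Next I would verify the two Schur-type estimates for $K_n$. Using the symmetry $T^{x_k}V_n(t)=T^tV_n(x_k)$ of the generalized translation and assumption~\eqref{VVV},
\[
\sup_{t\in\Omega}\int_\Omega K_n(t,y)\,d\mu(y)\;=\;\sup_{t\in\Omega}\bigg\|\sum_{k=1}^N|T^tV_n(x_k)|\,\chi_{\Omega_k}\bigg\|_{L_1(\Omega,\mu)}\;\le\;C_2.
\]
For the column sum, the disjointness of the $\Omega_k$ collapses the sum in $K_n(\cdot,y)$ to at most one term for each $y$, and~\eqref{tr} combined with~\eqref{vp1} yields
\[
\sup_{y\in\Omega}\int_\Omega K_n(t,y)\,d\mu(t)\;\le\;\sup_{k}\|T^{x_k}V_n\|_{L_1(\Omega,\mu)}\;\le\;C\sup_{n}\|V_n\|_{L_1(\Omega,\mu)}.
\]

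To finish, I invoke a Schur/interpolation theorem for rearrangement-invariant Banach function spaces: a positive linear integral operator simultaneously bounded on $L_1(\Omega,\mu)$ and on $L_\infty(\Omega,\mu)$ extends to a bounded operator on every such space, including the Orlicz space $L_{\Phi}(\Omega,\mu)$, with operator norm controlled by the maximum of its $L_1\to L_1$ and $L_\infty\to L_\infty$ norms (Calder\'on--Mityagin). Applied to $|P_n|$, this yields the desired bound
\[
\bigg\|\sum_{k=1}^{N}|P_n(x_k)|\,\chi_{\Omega_k}\bigg\|_{\Phi}\;\le\;C_3\,\|P_n\|_{\Phi}.
\]
The main obstacle is this last interpolation step: one must ensure the Schur-type bound has the correct form for a general Orlicz function $\Phi$ without assuming the $\D_2$-condition. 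Either one cites the standard r.i.-interpolation result directly, or one gives a self-contained argument based on Jensen's inequality applied to the probability measures $K_n(t,y)\,d\mu(t)/\int K_n(t',y)\,d\mu(t')$ together with the convexity of $\Phi$, normalized by the uniform row bound $C_2$.
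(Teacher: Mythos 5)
Your argument is correct, and its engine is the same as the paper's: the reproducing identity $P_n(x_k)=\int_\Omega P_n(t)\,T^{x_k}V_n(t)\,d\mu(t)$ coming from $(V_n,\psi_j)=1$, the symmetry $T^{x_k}V_n(t)=T^tV_n(x_k)$, the disjointness of the $\Omega_k$, and the uniform $L_1$ bounds supplied by \eqref{tr}, \eqref{vp1}, and \eqref{VVV}. The only genuine divergence is the final step. You package the two $L_1$ bounds as a Schur test for the positive kernel $K_n(t,y)=\sum_k\chi_{\Omega_k}(y)|T^{x_k}V_n(t)|$ and then invoke Calder\'on--Mityagin interpolation between $L_1$ and $L_\infty$; the paper instead runs the direct modular estimate, applying Jensen's inequality to the probability measure $d\tau/\tau(\Omega)$ with $d\tau=|T^{x_k}V_n|\,d\mu$ (using $1\le\tau(\Omega)\le C_4$, where the lower bound comes from $(V_n,\psi_0)=1$) and summing over $k$ against the weights $\lambda_k=\mu(\Omega_k)$ via \eqref{VVV}. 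That direct route is exactly the ``self-contained Jensen argument'' you list as your fallback, and it is preferable here: the interpolation theorem in its exact form requires the underlying measure space to be resonant, which is not guaranteed for a general $\sigma$-finite $\mu$ on $\Omega\subset\R^d$, whereas the Jensen computation needs only convexity and monotonicity of $\Phi$ and works verbatim for the Luxemburg norm. So either promote your fallback to the main argument or add the resonance hypothesis; with that adjustment the proof is complete and matches the paper's in substance.
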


\begin{proof}
  We have
 \begin{equation}\label{pre1+}
   \begin{split}
       \Phi(P_n(x_k))&=\Phi\(\int_\Omega P_n(t)T^{x_k}V_n(t)d\mu(t)\)\le \Phi\(\int_\Omega |P_n(t)T^{x_k}V_n(t)|d\mu(t)\).
    \end{split}
 \end{equation}
Denote
$$
d\tau(t)=|T^{x_k}V_n(t)|d\mu(t).
$$
Then, by~\eqref{tr} and~\eqref{vp1},
\begin{equation}\label{ZZZ}
  \begin{split}
     1\le \tau(\Omega)=\int_\Omega |T^{x_k}V_n(t)|d\mu(t)\le C\int_\Omega |V_n(t)|d\mu(t)\le cC=C_4.
   \end{split}
\end{equation}
Applying Jensen's inequality together with~\eqref{ZZZ}, we obtain
\begin{equation}\label{pre2+}
  \begin{split}
     \Phi&\(\int_\Omega |P_n(t)T^{x_k}V_n(t)|d\mu(t)\)=\Phi\(\int_\Omega |P_n(t)|d\tau(t)\)\\
     &\le \Phi\(\frac{C_4}{\tau(\Omega)}\int_\Omega |P_n(t)|d\tau(t)\)\le \frac{1}{\tau(\Omega)}\int_\Omega \Phi\(C_4|P_n(t)|\)d\tau(t)\\
     &\le\int_\Omega \Phi\(C_4|P_n(t)|\)|T^{x_k}V_n(t)|d\mu(t).
   \end{split}
\end{equation}
Now, combining~\eqref{pre1+} with~\eqref{pre2+} and using that $T^{x_k}V_n(t)=T^{t}V_n(x_k)$ and~\eqref{VVV}, we obtain
\begin{equation*}
  \begin{split}
     \sum_{k=1}^N \Phi(P_n(x_k))\l_k&\le  \int_\Omega \Phi\(C_4|P_n(t)|\)\sum_{k=1}^N|T^{t}V_n(x_k)|\l_kd\mu(t)\\
     &\le C_2\int_\Omega \Phi\(C_4|P_n(t)|\)d\mu(t),
   \end{split}
\end{equation*}
where $\l_k=\int_{\Omega_k}d\mu(t)$. From the last inequality, we easily obtain~\eqref{pre2o}.
\end{proof}

\begin{remark}
We note that conditions~\eqref{vp1}  and~\eqref{VVV} hold if $T^s$ is admissible, there exists the quadrature formula
\begin{equation*}
        \int_\Omega P(x)d\mu(x)=\sum_{k=1}^N P(x_k)\l_k\quad\text{for all}\quad P\in \Psi_{2n},
\end{equation*}
and the kernel $V_n$ can be represented in the following form
$$
V_n(x)=\sum_{j=1}^K \a_j F_{j,n}(x),
$$
where $F_{j,n}\in \Psi_{2n}$, $\sup_{j,n} \|F_{j,n}\|_{L_{1}(\Omega,\mu)}\le c<\infty$, $F_{j,n}(x)\ge 0$, and $K$ and $\a_j$ do not depend on $n$.

Indeed, property~\eqref{vp1} is clear and~\eqref{VVV} follows from the following relations:
     \begin{equation*}
     \begin{split}
                \bigg\|\sum_{k=1}^N &|T^tV_n(x_k)|\chi_{\Omega_k}\bigg\|_{L_{1}(\Omega,\mu)}=\sum_{k=1}^N |T^tV_n(x_k)|\l_k\\
                &\le \sum_{j=1}^K |\a_j|\sum_{k=1}^N T^{t}F_{j,n}(x_k)\l_k=\sum_{j=1}^K |\a_j|\int_\Omega T^{t}F_{j,n}(u)d\mu(u)\\
                &\le C\sum_{j=1}^K |\a_j| \|F_{j,n}\|_{L_{1}(\Omega,\mu)}\le  cC\sum_{j=1}^K |\a_j|.
     \end{split}
     \end{equation*}
As an example, take the orthonormal trigonometric system $(\psi_k)_{k=0}^\infty$ and consider  the de la Vall\'ee Poussin kernel
$V_n$ given by
$V_n(x)=2K_{2n-1}(x)-K_{n-1}(x)$, where $K_n$ is the Fej\'er kernel. Then $V_n$ satisfies both
\eqref{vp1}  and~\eqref{VVV}.

\end{remark}


\section{MZ inequalities for trigonometric polynomials}

Denote by $\mathcal{T}_n^d$
the set of all trigonometric polynomials  of degree at most $n$ in each variable on $\T^d=[0,2\pi)^d$, 
 i.e.,
$$
\mathcal{T}_n^d:=\bigg\{T\,:\, T(x)=\sum_{k_1=-n}^n\dots \sum_{k_d=-n}^n c_k e^{i(k,x)} \bigg\}.
$$
In the case $d=1$, we denote $\mathcal{T}_n=\mathcal{T}_n^1$ and $\T=\T^1$.

\subsection{Minimal set of nodes}

In the case of minimal set of nodes in the MZ inequality, we obtain the following result, cf.~\eqref{mzclassical}.

\begin{theorem}\label{thF}
Let $X$ be a Banach lattice on $\T$ endowed with the Lebesgue measure, $d\mu(x)=dx$.
If there exists $C>0$ such that
\begin{equation}\label{SnSn}
  \| S_n f\|_{X}\le C\|f\|_{X},\quad f\in X,
\end{equation}
where $S_n$ is the n-th Fourier partial sum operator.
Then, for all $T_{n}\in \mathcal{T}_n$, we have
    \begin{equation}\label{EEE1t}
      \|T_n\|_{X}\asymp\bigg\|\sum_{k=0}^{2n} |T_n(t_k)|\chi_{[t_{k},t_{k+1}]}\bigg\|_{X},\quad t_k=\frac{2\pi k}{2n+1}.
     \end{equation}
\end{theorem}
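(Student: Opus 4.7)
The approach is to apply Theorem~\ref{thOr2} with $\Psi_n = \mathcal{T}_n$, $\Omega = \T$, $d\mu = dx$, and the standard trigonometric orthonormal system $\psi_k(x) = (2\pi)^{-1/2}e^{ikx}$. I will verify the five hypotheses one by one. Conditions 1 and 2 come from the classical equispaced quadrature formula
\[
\int_0^{2\pi} T(x)\,dx = \frac{2\pi}{2n+1}\sum_{k=0}^{2n} T(t_k),
\]
which is exact on $\mathcal{T}_{2n}$ and hence on products $P_n Q_n$ with $P_n, Q_n \in \mathcal{T}_n$, with weights $\mu_k = 2\pi/(2n+1)$. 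Taking $\Omega_k = [t_k, t_{k+1})$ gives a disjoint partition of $\T$, so $\sum_{k=0}^{2n}\chi_{\Omega_k} = 1$ (hence $c_2 = 1$) and $|\Omega_k| = \mu_k$ (hence $c_1 = 1$). Condition 3 is exactly the hypothesis~\eqref{SnSn}.

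The key step is condition 4, the Bernstein-type bound in both $X$ and $X'$. A Banach lattice on $\T$ is naturally translation invariant, and a change-of-variables in the supremum defining $\|\cdot\|_{X'}$ shows that $X'$ is translation invariant as well. Every $T_n\in\mathcal{T}_n$ admits the reproducing formula $T_n = T_n * V_n$, where $V_n \in \mathcal{T}_{2n}$ is a de~la~Vall\'ee Poussin type kernel with $\sup_n \|V_n\|_{L_1(\T)} < \infty$; iterating the differentiation of the convolution gives $T_n^{(m)} = T_n * V_n^{(m)}$ with $\|V_n^{(m)}\|_{L_1(\T)} \le (Cn)^m$. The standard convolution inequality $\|f * g\|_Y \le \|g\|_{L_1(\T)}\|f\|_Y$ valid in any translation-invariant Banach lattice $Y$ then yields
\[
\|T_n^{(m)}\|_Y \le (Cn)^m \|T_n\|_Y, \qquad Y\in\{X,X'\},
\]
verifying condition 4 with $\beta = 1$, $\varphi_n \equiv 1$, $B_m = C^m$. (Alternatively, one can invoke Remark~\ref{remm2.5}, reducing to the classical $L_\infty$ Bernstein inequality.)

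Condition 5 is then immediate: for $x\in\Omega_k = [t_k, t_{k+1})$ we have $|x - t_k| < 2\pi/(2n+1) < \pi/n$, so any $A > \pi$ gives $t_k \in Q(x, A/n)$, and with $B_m = C^m$ the series $\eta = \sum_{m\ge 1}(AC)^m/m! = e^{AC} - 1$ is finite. All five conditions being verified, Theorem~\ref{thOr2} yields~\eqref{EEE1t} directly. The only nontrivial ingredient is the passage from the hypothesized boundedness of $S_n$ on $X$ to the Bernstein inequality in $X$ and $X'$; the argument above handles this cleanly via translation invariance and convolution with $V_n$.
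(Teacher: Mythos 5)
Your overall strategy coincides with the paper's: both proofs reduce Theorem~\ref{thF} to Theorem~\ref{thOr2}, and your verification of conditions 1), 2), 3) and 5) matches the paper. The problem is in condition 4), which is the only nontrivial point, and there your argument has a genuine gap. You assert that ``a Banach lattice on $\T$ is naturally translation invariant'' and then use the convolution bound $\|f*g\|_Y\le\|g\|_{L_1}\|f\|_Y$ for $Y\in\{X,X'\}$. But translation invariance is \emph{not} among the hypotheses of the theorem: the only assumption on $X$ beyond the lattice axioms is the boundedness of $S_n$. Weighted spaces $L_{p,w}(\T)$ with nonconstant $w$, or the variable Lebesgue spaces $L_{p(x)}(\T)$ listed in the paper as example (iv) for this very theorem, are Banach lattices on $\T$ that are not translation invariant, yet the theorem is claimed (and intended) to cover them. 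Without translation invariance, Lemma~\ref{lecon} and hence your Young-type inequality for $T_n*V_n^{(m)}$ are unavailable, so your derivation of the Bernstein inequality in $X$ and $X'$ does not go through. Your fallback via Remark~\ref{remm2.5} fails for the same reason, since that remark also presupposes $\|F_n(\cdot+t)\|_X\le c\|F_n\|_X$.

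The paper closes exactly this gap by deducing the Bernstein inequality \emph{from the hypothesis~\eqref{SnSn} itself} rather than from translation invariance. It uses the representation
$$
T_n'(x)=\frac n{\pi}\int_\T T_n(x+u)\sin(nu)\,K_{n-1}(u)\,du,
$$
with $K_{n-1}\ge 0$ the Fej\'er kernel, to get $|T_n'|\le 2n\,(|T_n|*K_{n-1})$ pointwise, and then the identity $f*K_{n-1}=\frac1n\sum_{k=0}^{n-1}S_kf$ converts the convolution into an average of partial-sum operators, each bounded on $X$ by~\eqref{SnSn}. This yields $\|T_n'\|_X\le 2Cn\|T_n\|_X$ (and, by self-adjointness of $S_n$, the same in $X'$) with no appeal to translation invariance; iteration gives the higher-order bounds with $B_r=(2C)^r$, after which condition 5) holds as you noted. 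To repair your proof you would either have to add translation invariance as a hypothesis (weakening the theorem) or replace your convolution step with an argument of this type that funnels everything through the boundedness of $S_n$.
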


\begin{proof}
  By Theorem~\ref{thOr2}, it is enough to verify that the following Bernstein inequality holds:
  \begin{equation*}
    \|T_n'\|_X\le 2Cn\|T_n\|_X,\quad T_n\in \mathcal{T}_n.
 \end{equation*}
  It is well known that
  $$
  T_n'(x)=\frac n{\pi}\int_\T T_n(u+x)\sin nu K_{n-1}(u)du,
  $$
  where $K_{n-1}(x)=\sum_{|k|\le n-1} (1-\frac{|k|}{n})e^{ikx}\ge 0$, $x\in \T$, is the classical Fej\'er kernel. Therefore, 
  \begin{equation}\label{KnKn}
    \|T_n'\|_X\le 2n\| |T_n|*K_{n-1}\|_X.
  \end{equation}
  It remains  to note that $f*K_{n-1}=\frac1{n}\sum_{k=0}^{n-1}S_kf$ and to apply the triangle inequality and~\eqref{SnSn}.
\end{proof}

\begin{remark}
    Relation~\eqref{EEE1t} with explicit constants can be written as follows:
    \begin{equation}\label{TTMZ}
              e^{-\frac{4\pi C}{3}}\bigg\|\sum_{k=0}^{2n} |T_n(t_k)|\chi_{[t_{k},t_{k+1}]}\bigg\|_{X} \le \|T_n\|_{X} \le Ce^{\frac{4\pi C}{3}}\bigg\|\sum_{k=0}^{2n} |T_n(t_k)|\chi_{[t_{k},t_{k+1}]}\bigg\|_{X},
    \end{equation}
    where $C$ is given by \eqref{SnSn}.
Additionally, if \eqref{SnSn} as well as the Bernstein inequality
$  \|T_n'\|_X\le B n\|T_n\|_X$ hold, then
    the factor $e^{\frac{4\pi C}{3}}$ can be replaced by $e^{\frac{2\pi B}{3}}$. In particular, if $X$ is translation invariant, then $B=1$.

    Note that $C$ given by \eqref{SnSn} may depend on $n$. For example, take
    $X=L_1(\T)$ and $C\asymp \log n$, cf.~\cite[Ch. X, Th. 7.28]{Z}.


\end{remark}
   \begin{proof}
Indeed, under the conditions of Theorem~\ref{thF},
conditions 1)-4) of Theorem~\ref{thOr2} hold with the following constants:
$c_1=c_2=\b=1$, $C_1=C$, $B_\a=(2C)^\a$, $\a\in \N$, while condition 5) of Theorem~\ref{thOr2} is valid with $A=\frac{2\pi}{3}$ and $\eta=e^{\frac{4\pi C}{3}}-1$.

If $X$ is translation invariant, then $B_\a=1$ and, hence, $\eta=e^{\frac{2\pi}{3}}-1$.
\end{proof}



\begin{example}
    We note that~\eqref{SnSn} holds (that is, the MZ inequality \eqref{EEE1t} is valid) in the following cases:

\begin{itemize}
    \item[$(i)$] $X=L_p(\T)$, $1<p<\infty$, cf.  inequality~\eqref{mzclassical};

    \item[$(ii)$] $X=L_\Phi(\T)$ is the Orlicz space, where $\Phi$ satisfies conditions 1), 2) and 3) and $\Phi, \Phi^*\in\Delta_2$, see Theorem~\ref{th1} below; see also~\cite{PW22} for some MZ inequalities 
     in Orlicz spaces;

\item[$(iii)$]  $X=L_{p,q}(\T)$, $1<p<\infty$, $1\le q\le \infty$ is the Lorentz space;

   \item[$(iv)$]  $X=L_{p(x)}(\T)$ is the variable Lebesgue space  with a periodic function $p(x)$ satisfying the log-condition~\eqref{log} and $1<p_-\le p(x)\le p_+<\infty$, see, e.g.,~\cite[2.8.4]{KMRS16}.
\end{itemize}
At the same time, it is well known that~\eqref{EEE1t} is not valid if $X=L_1$ or $L_\infty$, see~\cite[Ch.~X, \S~7]{Z}.
\end{example}

Let us specify Theorem~\ref{thF} in the case of Orlicz spaces on $\T^d$ endowed with the norm
\begin{equation*}\label{luxnorm2}
  \|f\|_{\Phi}=\inf\left\{\l\,:\, \int_{\T^d}\Phi\(\frac{f(x)}{\l}\)dx\le 1\right\}.
\end{equation*}
Let $X_m=(x_j)_{j\in [1,m]^d}$
be a set of $m^d$ distinct points on $\T^d$. We denote
\begin{equation}\label{n0}
  \|f\|_{\Phi,X_m}
  =\inf \Big\{ \lambda>0 : \(\frac{2\pi}{m}\)^d \sum\limits_{j\in [1,m]^d} \Phi\Big(\frac{f(x_j)}{\lambda}\Big)  \le 1\Big\}.
\end{equation}

In the next theorem, we establish MZ inequalities for Orlicz spaces in their classical form. Special attention is given to the numerical constants. In particular, in Theorem~\ref{th3}, we obtain sharper estimates than those derived from Theorem~\ref{corMZinv}.


\begin{theorem}\label{th1}
Let $\Phi$ be an Orlicz function such that $\Phi\in \Delta_2$ and $\Phi^*\in \Delta_2$. Let $T_n \in \mathcal{T}_n^d$ and $X_{2n+1}=(x_j)_{j\in [0, 2n]^d}$ with $x_j =\frac{2 \pi j }{2n +1}$. Then
$$
\frac1{3^d C_{\Phi,d} } \|T_n\|_{\Phi}\le \|T_n\|_{\Phi, X_{2n+1}}\le 3^d\|T_n\|_{\Phi},
$$
where the constant $C_{\Phi,d}$ is defined in~\eqref{sn} below.
\end{theorem}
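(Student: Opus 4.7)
The plan is to reduce the claim to an MZ inequality on $L_\Phi(\T^d)$ with sharp explicit constants. Observing that the cubes $\Omega_j=x_j+[-\tfrac{\pi}{2n+1},\tfrac{\pi}{2n+1}]^d$ form a partition of $\T^d$ of volume $\bigl(\tfrac{2\pi}{2n+1}\bigr)^d$, one has the pointwise identity of $\Phi$-modular infima
$$
\|T_n\|_{\Phi,X_{2n+1}}=\bigg\|\sum_{j\in [0,2n]^d}|T_n(x_j)|\chi_{\Omega_j}\bigg\|_{\Phi},
$$
so the two inequalities of the theorem are the two sides of the classical MZ bound in $L_\Phi$.

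For the upper bound I would work with the tensor-product de la Vall\'ee Poussin kernel $V_n=\bigotimes_{i=1}^{d}(2K_{2n-1}-K_{n-1})$, which reproduces $T_n$ under normalized convolution, $T_n=V_n\star T_n$. The key observation is that $|V_n|$ is dominated by the positive kernel $W_n=\bigotimes_{i=1}^{d}(2K_{2n-1}+K_{n-1})$, a sum of $2^d$ tensor-product Fej\'er kernels, each lying in $\mathcal{T}_{2n}^d$. Since the $(2n+1)^d$-point cubature is exact on $\mathcal{T}_{2n}^d$, this gives
$$
\sum_{j}|\Omega_j|\,|V_n(x_j-y)|\le \int_{\T^d}W_n(u)\,du=(2\pi)^d 3^d
$$
uniformly in $y$. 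Writing $M=\|V_n\|_1/(2\pi)^d\le 3^d$ and applying Jensen's inequality to the probability measure $d\mu_j(y)=|V_n(x_j-y)|\,dy/((2\pi)^d M)$, together with the elementary convexity bound $\Phi(\alpha t)\le\alpha\Phi(t)$ for $0\le\alpha\le 1$ with $\alpha=M/3^d$, would give
$$
\sum_{j}|\Omega_j|\,\Phi\Bigl(\tfrac{|T_n(x_j)|}{3^d\|T_n\|_{\Phi}}\Bigr)\le\tfrac{3^d}{M}\int_{\T^d}\Phi\Bigl(\tfrac{M|T_n|}{3^d\|T_n\|_{\Phi}}\Bigr)dy\le 1,
$$
which is the upper bound.

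For the lower bound I would invoke Orlicz duality: under $\Phi,\Phi^*\in\Delta_2$ the Luxemburg and Orlicz norms are equivalent, so $\|T_n\|_\Phi\lesssim\sup_{\|g\|_{\Phi^*}\le 1}|\int T_n\bar g\,dy|$. Since $T_n\in\mathcal{T}_n^d$, one may replace $g$ by $S_ng$, and then $T_n\overline{S_ng}\in\mathcal{T}_{2n}^d$, so the $(2n+1)^d$-point cubature is exact and
$$
\int_{\T^d}T_n\overline{S_ng}\,dy=\int_{\T^d}\Bigl(\sum_{j}T_n(x_j)\chi_{\Omega_j}\Bigr)\overline{\Bigl(\sum_{j}S_ng(x_j)\chi_{\Omega_j}\Bigr)}dy.
$$
H\"older's inequality in the pair $(L_\Phi,L_{\Phi^*})$, the already proven upper bound applied to $S_ng$ in $L_{\Phi^*}$, and the boundedness of $S_n$ on $L_{\Phi^*}$ (which holds because $\Phi,\Phi^*\in\Delta_2$) then close the estimate, with $C_{\Phi,d}$ absorbing the H\"older constant and $\|S_n\|_{L_{\Phi^*}\to L_{\Phi^*}}$.

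The main obstacle is synchronising the three ingredients of the upper bound---Jensen, the pointwise majorisation $|V_n|\le W_n$ with exact cubature, and the convexity scaling of $\Phi$---so that the numerical factor is precisely $3^d$; once that is in place, the lower bound is essentially a mechanical duality argument given the $\Delta_2$-based control of $S_n$ on $L_{\Phi^*}$.
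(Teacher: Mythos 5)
Your proof is correct, and it splits naturally into two halves of different character relative to the paper. For the lower bound you follow essentially the same route as the paper: the paper invokes its Proposition~\ref{prmz}, whose proof is exactly your chain -- duality, replacement of $g$ by $S_ng$, exactness of the $(2n+1)^d$-point cubature on $\mathcal{T}_{2n}^d$, H\"older in the pair $(L_\Phi,L_{\Phi^*})$, the discrete upper bound applied to $S_ng$ in $L_{\Phi^*}$, and the boundedness of $S_n$ (which the paper derives, as you do, from $\Delta_2$ and the reflexivity of $L_{\Phi^*}$). For the upper bound the paper simply cites Zygmund's one-dimensional inequality $\frac{2\pi}{2n+1}\sum_\nu\Phi(T_n(t_\nu))\le\int_{\T}\Phi(3T_n)$ and tensorizes it, whereas you give a self-contained proof via the reproducing de la Vall\'ee Poussin kernel, the positive majorant $W_n=\bigotimes(2K_{2n-1}+K_{n-1})$, exact cubature, and Jensen; this is precisely the mechanism of the paper's own Proposition~\ref{prmz2} and the remark following it, specialized to equispaced nodes, and your bookkeeping of the constant $3^d$ (via $\|W_n\|_1/(2\pi)^d=3^d$ and the convexity scaling $\Phi(\alpha t)\le\alpha\Phi(t)$) checks out. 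The only caveat is cosmetic: to land exactly on the constant $\tfrac1{3^dC_{\Phi,d}}$ you should run the duality with the associate-space norm $\|\cdot\|_{X'}$ (for which $\|f\|_{X''}=\|f\|_X$ and H\"older has constant $1$) rather than the Luxemburg norm of $\Phi^*$, which would otherwise cost an extra harmless factor of $2$.
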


\begin{proof}
The estimate from above is a simple corollary of the following inequality
$$
\frac{2\pi}{2n+1}\sum\limits^{2n}_{\nu=0} \Phi\({T_n\(\frac{2 \pi \nu}{2n+1}\)}\)  \le \int_{\T} \Phi\(3 {T_n(x)}\) dx,\quad T_n\in \mathcal{T}_n^1,
$$
where $\Phi$  is a convex non-negative and non-decreasing function on $\mathbb{R_+}$, see~\cite[Ch.~X, \S.~7]{Z}.
The estimate from below follows from Proposition~\ref{prmz}. Let us only show that condition 3) of Proposition~\ref{prmz} holds under the conditions of Theorem~\ref{th1}.
Indeed, using the fact that  $\|S_ng\|_{L_p(\T^d)} \le C_{p,d} \|g\|_{L_p(\T^d)}$, $1<p<\infty$, where
$$
S_ng(x)=\sum_{k\in [-n,n]^d} \widehat{g}(k)e^{i(k,x)},
$$
see, e.g.,~\cite[Theorem~4.2.5]{W12} and~\cite[Theorem 3]{R63}, we get that
\begin{equation}\label{sn}
  \|S_ng\|_{\Phi} \le C_{\Phi,d} \|g\|_{\Phi}
\end{equation}
holds provided $L_{\Phi^*}$ is reflex. At the same time, by~\cite[p.~226]{KR61} the space $L_{\Phi^*}$ is reflex if and only if  $\Phi^*$ and $\Phi$ satisfy $\Delta_2$-condition.
\end{proof}

\subsection{MZ type inequalities in Banach lattices}


Here we start with the estimates for maximal and minimal functions.


\begin{proposition}\label{corn1}

Let $X=X(\T^d,\mu)$ be a Banach function lattice such that
\begin{equation}\label{BerBL}
  \left\|\frac{\partial}{\partial x_i}T_n\right\|_X\le Bn\|T_n\|_X,\quad i=1,\dots,d,\quad T_n\in \mathcal{T}_n^d,
\end{equation}
for some constant $B>0$. Then, for any $0<A<\frac{\log 2}{dB}$ and any $T_n\in \mathcal{T}_n^d$, we have
  \begin{equation*}
    \frac{1}{e^{dAB}}\Big\|\max_{t\in Q(x,A/n)}|T_n(t)|\Big\|_X\le \|T_n\|_X\le \frac{1}{2-e^{dAB}}\Big\|\min_{t\in Q(x,A/n)}|T_n(t)|\Big\|_X.
  \end{equation*}

\end{proposition}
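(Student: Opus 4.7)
The plan is to derive Proposition~\ref{corn1} from Theorem~\ref{thmmwe} applied with parameters $q=1$, $\beta=1$, and $\vp_n\equiv 1$, after first bootstrapping the first-order Bernstein inequality~\eqref{BerBL} into a bound on all mixed partial derivatives.

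First, since $\partial T_n/\partial x_i$ again belongs to $\mathcal{T}_n^d$, hypothesis~\eqref{BerBL} applies to this derivative as well. A straightforward induction on $|\a|_1$ therefore yields
\begin{equation*}
  \|D^\a T_n\|_X \le (Bn)^{|\a|_1}\|T_n\|_X,\qquad \a\in\Z_+^d,\quad T_n\in \mathcal{T}_n^d,
\end{equation*}
which is precisely hypothesis~\eqref{berwe} of Theorem~\ref{thmmwe} with $\vp_n\equiv 1$, $\beta=1$, and $B_\a=B^{|\a|_1}$.

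Next, I would evaluate the constant $\eta$ from~\eqref{asumpeta}. Using the multinomial identity $\sum_{|\a|_1=k} k!/\a!=d^k$, the series collapses into an exponential:
\begin{equation*}
  \eta=\sum_{\a\in\Z_+^d\setminus\{\bar{0}\}}\frac{(AB)^{|\a|_1}}{\a!}=\sum_{k=1}^\infty \frac{(AB)^k}{k!}\sum_{|\a|_1=k}\frac{k!}{\a!}=\sum_{k=1}^\infty \frac{(dAB)^k}{k!}=e^{dAB}-1.
\end{equation*}
The condition $\eta<1$ becomes exactly $A<(\log 2)/(dB)$, matching the stated range. Substituting $1+\eta=e^{dAB}$ and $1-\eta=2-e^{dAB}$ into~\eqref{eqwe1} then gives the two inequalities of the proposition, noting that for $\vp_n\equiv 1$ and $\beta=1$ the neighborhood $Q(x,\tfrac{A\vp_n(x)}{n^\beta})$ reduces to $Q(x,A/n)$.

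There is no real obstacle; the only conceptual observation worth making is that the factorials generated by the Taylor expansion underlying Theorem~\ref{thmmwe} absorb the combinatorial factor $d^k$ into an exponential, producing the sharper constant $e^{dAB}$ in place of the power $(1+2AB)^d$ obtained via the Newton--Leibniz route of Theorem~\ref{thmm2}. This is precisely the payoff of using Taylor's formula instead of the fundamental theorem of calculus.
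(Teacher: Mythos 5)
Your proof is correct and follows exactly the route the paper intends: the paper's own proof consists of the single line ``This is a direct corollary from Theorem~\ref{thmmwe}'', and your argument supplies precisely the missing details — iterating~\eqref{BerBL} to get $B_\a=B^{|\a|_1}$ (valid since $\partial T_n/\partial x_i\in\mathcal{T}_n^d$) and summing the series via the multinomial identity to obtain $\eta=e^{dAB}-1$, which matches the stated constants and the threshold $A<(\log 2)/(dB)$.
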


\begin{proof}

This is a direct corollary from Theorem~\ref{thmmwe}.
\end{proof}

\begin{remark}\label{remF}
Instead of assuming the Bernstein inequality~\eqref{BerBL}, we can suppose that there exists a constant $c>0$ such that
\begin{equation}\label{Fejer}
    \|\s_{n,i}f\|_X\le c\|f\|_X,\quad f\in X,
\end{equation}
where $\s_{n,i}f$ is the classical F\'ejer means of $f$ with respect to the variable $x_i$. This can be shown by repeating the arguments of the proof of Theorem~\ref{thF}.
\end{remark}


Proposition~\ref{corn1}  implies the following result.

\begin{theorem}\label{cheT}
  Let $X=X(\T^d,\mu)$ be a Banach lattice and let $\e>0$. Assume that~\eqref{BerBL} holds with the constant $B>0$.
  Then, for all $T_n\in \mathcal{T}_n^d$ and $N\ge (\frac{2\pi dB}{\log 2}+\e)n$,
\begin{equation}\label{TrMZ}
  \|T_n\|_X\asymp \bigg\|\sum_{k_1=1}^{N}\dots\sum_{k_d=1}^{N} |T_n(x_k)|\chi_{\Omega_k}\bigg\|_{X},
\end{equation}
  where $x_k$ is an arbitrary point in $\Omega_k=[\frac{2\pi(k_1-1)}{N},\frac{2\pi k_1}{N}]\times\dots\times [\frac{2\pi(k_d-1)}{N},\frac{2\pi k_d}{N}]$, $k\in [1,N]^d$.
\end{theorem}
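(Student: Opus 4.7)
The plan is to apply Proposition~\ref{corn1} with a carefully chosen parameter $A$ and to use that each cell $\Omega_k$ is a cube of $\ell^\infty$-side $\frac{2\pi}{N}$, so that $\Omega_k\subset Q(x,\frac{A}{n})$ automatically for every $x\in\Omega_k$ once $A\ge \frac{2\pi n}{N}$. The hypothesis $N\ge(\frac{2\pi dB}{\log 2}+\e)n$ is used exactly to guarantee that such an $A$ can be taken strictly below the threshold $\frac{\log 2}{dB}$ imposed by Proposition~\ref{corn1}.

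First I would fix $A$. Rearranging the hypothesis on $N$ gives
$$
\frac{2\pi n}{N}\le \frac{2\pi\log 2}{2\pi dB+\e\log 2}<\frac{\log 2}{dB},
$$
so any choice (for instance $A=\frac{2\pi n}{N}$) with $\frac{2\pi n}{N}\le A<\frac{\log 2}{dB}$ is admissible. For this $A$, Proposition~\ref{corn1} applies and yields
$$
\frac{1}{e^{dAB}}\bigg\|\max_{t\in Q(x,A/n)}|T_n(t)|\bigg\|_X\le \|T_n\|_X\le \frac{1}{2-e^{dAB}}\bigg\|\min_{t\in Q(x,A/n)}|T_n(t)|\bigg\|_X.
$$

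The remaining step is a pointwise comparison. Since the sets $\Omega_k$ are essentially disjoint and cover $\T^d$, for almost every $x$ there is a unique index $k$ with $x\in \Omega_k$, and for this $k$ the choice of $A$ gives $\Omega_k\subset Q(x,\frac{A}{n})$. Consequently, since $x_k\in \Omega_k$,
$$
\min_{t\in Q(x,A/n)}|T_n(t)|\le \min_{t\in \Omega_k}|T_n(t)|\le |T_n(x_k)|=\sum_{k}|T_n(x_k)|\chi_{\Omega_k}(x),
$$
and symmetrically $\sum_{k}|T_n(x_k)|\chi_{\Omega_k}(x)\le \max_{t\in Q(x,A/n)}|T_n(t)|$. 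Taking $X$-norms and chaining with the preceding two-sided estimate immediately produces~\eqref{TrMZ}.

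The argument is essentially immediate once Proposition~\ref{corn1} is in hand; the only delicate point, which I expect to be the main (but mild) obstacle, is translating the quantitative hypothesis on $N$ into the strict admissibility condition $A<\frac{\log 2}{dB}$. This is precisely the role played by the $\e$-buffer in the statement, without which the factor $(2-e^{dAB})^{-1}$ in the right-hand inequality of Proposition~\ref{corn1} would blow up.
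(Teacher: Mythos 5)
Your proposal is correct and follows essentially the same route as the paper, which simply derives Theorem~\ref{cheT} from Proposition~\ref{corn1} (equivalently, from Theorem~\ref{corMZinv}) by choosing $A$ with $\frac{2\pi n}{N}\le A<\frac{\log 2}{dB}$, exactly as you do; your computation showing the $\e$-buffer guarantees admissibility of $A$ is the only nontrivial point and it is handled correctly. The sole minor imprecision is the claim that almost every $x$ lies in a unique $\Omega_k$: for a general measure $\mu$ the cell boundaries need not be $\mu$-null, but since the closed cubes overlap with multiplicity at most $2^d$ this only affects the constant in the lower bound (this is precisely the role of $c_d$ in Theorem~\ref{corMZinv}).
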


%
%

\begin{example}\label{exbb}
The MZ inequality \eqref{TrMZ}
holds in the following cases:
\begin{itemize}
    \item[$(i)$] $X$ is any translation invariant Banach lattice on $\T^d$. For such spaces $X$, equivalence~\eqref{TrMZ} holds with $N\ge (\frac{2\pi d}{\log 2}+\e)n$. In particular, as a space $X$, we can consider the mixed Lebesgue spaces
$X=L_{\overline{p}}(\T^d)$, $\overline{p}=(p_1,\dots,p_d)$, $1\le p_i\le \infty$, $i=1,\dots,d$, see~\cite[p.~3.3.4]{Tem};   the Lorentz spaces $L_{p,q}(\T^d)$ with $1\le p\le \infty$; the  Orlicz spaces $X=L_{\Phi}(\T^d)$, where the function $\Phi$ satisfies only conditions 1) and 2) of Definition~\ref{defO}; the Morrey spaces $M_{p,q}^\l(\T^d)$ with $1\le p<\infty$, $1\le q\le \infty$, and $0\le \l\le \frac dp$.

\item[$(ii)$]  $X=L_{p,w}(\T)$, $1\le p\le \infty$, where $w$ is a doubling weight (see~\cite{MT00}) or a non-doubling weight satisfying certain  conditions (see \cite{BT17} for a needed Bernstein inequality~\eqref{BerBL});

\item[$(iii)$]  $X=L_{p(x)}$ is a variable Lebesgue space,  where $p(x)$ is such that $1\le p_-\le p(x)\le p_+<\infty$ and the log-condition~\eqref{log} holds, see, e.g.,~\cite{W23} for boundedness of the Fej\'er means~\eqref{Fejer} in this case.
\end{itemize}
\end{example}


\subsection{MZ type inequalities in quasi-Banach lattices}

The next theorem follows directly from Theorem~\ref{corthmmwe}.

\begin{theorem}\label{qB}
  Let $X=X(\T^d,\mu;q)$, $0<q\le 1$, be a quasi-Banach lattice.   Assume that, for all $\a\in\Z_+^d$, there exist $B_\a>0$ and $A>0$ such that
  \begin{equation}\label{berweQu}
    \|D^\a T_n\|_X\le B_\a n^{|\a|_1}\| T_n\|_X,\quad T_n\in \mathcal{T}_n,
  \end{equation}
and $\sum_{\a\in\Z_+^d\setminus  \{\bar{0}\}}\(\frac{A^{|\a|_1}B_\a}{\a!}\)^{q}<1$. Then, for all $T_n\in \mathcal{T}_n^d$ and  $N\ge \frac{2\pi}{A}n$,
\begin{equation}\label{TrMZ++++}
  \|T_n\|_X\asymp \bigg\|\sum_{k_1=1}^{N}\dots\sum_{k_d=1}^{N} |T_n(x_k)|\chi_{\Omega_k}\bigg\|_{X},
\end{equation}
  where $x_k$ is an arbitrary point in $\Omega_k=[\frac{2\pi(k_1-1)}{N},\frac{2\pi k_1}{N}]\times\dots\times [\frac{2\pi(k_d-1)}{N},\frac{2\pi k_d}{N}]$, $k\in [1,N]^d$.
\end{theorem}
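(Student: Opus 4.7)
The plan is to deduce Theorem~\ref{qB} as a direct specialization of Theorem~\ref{corthmmwe} to the unweighted setting with cubic partitions of $\T^d$. First I would take $\vp_n\equiv 1$ and $\beta=1$, and use the remark following Theorem~\ref{corthmmwe} that allows $\Omega_k^*=\Omega_k$ in the unweighted case. With these choices, the Bernstein hypothesis of Theorem~\ref{corthmmwe} is exactly the assumption~\eqref{berweQu}, and the smallness requirement $\eta=\sum_{\alpha\ne 0}\bigl(\tfrac{A^{|\alpha|_1}B_\alpha}{\alpha!}\bigr)^q<1$ is the second standing hypothesis. It remains to verify the two geometric conditions on the family $(\Omega_k)$.

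For the covering condition~\eqref{mzwe0}, the cubes $\Omega_k=\prod_{i=1}^d [\tfrac{2\pi(k_i-1)}{N},\tfrac{2\pi k_i}{N}]$ with $k\in[1,N]^d$ tile $\T^d$, so $\sum_k\chi_{\Omega_k}(x)=1$ almost everywhere and \eqref{mzwe0} holds with $c_d=1$. For the inclusion~\eqref{zvv}, which here reduces to $\Omega_k\subset Q(x,A/n)$ for every $x\in\Omega_k$, I would observe that the $\ell^\infty$-diameter of $\Omega_k$ equals $2\pi/N$; hence the inclusion is equivalent to $2\pi/N\le A/n$, i.e.\ to $N\ge \tfrac{2\pi}{A}n$, which is the standing assumption.

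Applying Theorem~\ref{corthmmwe}(A) then yields
\begin{equation*}
   \bigg\|\sum_{k\in[1,N]^d}\max_{y\in\Omega_k}|T_n(y)|\,\chi_{\Omega_k}\bigg\|_X\asymp \|T_n\|_X \asymp \bigg\|\sum_{k\in[1,N]^d}\min_{y\in\Omega_k}|T_n(y)|\,\chi_{\Omega_k}\bigg\|_X.
\end{equation*}
Finally, for any choice of representative $x_k\in\Omega_k$ we have $\min_{y\in\Omega_k}|T_n(y)|\le |T_n(x_k)|\le\max_{y\in\Omega_k}|T_n(y)|$, so by the lattice property the sum $\sum_k|T_n(x_k)|\chi_{\Omega_k}$ is pointwise sandwiched between the two extremal sums; sandwiching the $X$-(quasi)norms gives the desired equivalence~\eqref{TrMZ++++}.

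I do not expect a substantive obstacle: the entire content is the correct bookkeeping of the specialization $\vp_n\equiv1$, $\beta=1$, together with the elementary fact that $\operatorname{diam}_{\ell^\infty}\Omega_k=2\pi/N$. The only mild point to be careful about is the arbitrariness of $x_k\in\Omega_k$, which is handled automatically by the sandwiching argument in the last step rather than requiring a separate invocation of Theorem~\ref{corthmmwe}.
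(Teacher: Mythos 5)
Your proposal is correct and follows exactly the paper's route: the paper states that this theorem ``follows directly from Theorem~\ref{corthmmwe}'', and your specialization ($\vp_n\equiv1$, $\beta=1$, $\Omega_k^*=\Omega_k$, diameter check $2\pi/N\le A/n$, and the pointwise sandwiching of $|T_n(x_k)|$ between the min and max over $\Omega_k$) is precisely the bookkeeping that verification requires. The only cosmetic quibble is that the closed cubes overlap on their (measure-zero) boundaries, so strictly one takes $c_d=2^d$ rather than $1$ in \eqref{mzwe0}, which only affects the implied constant.
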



\begin{example}
The MZ inequality \eqref{TrMZ++++}
holds in the following cases:
    \begin{itemize}
        \item[$(i)$] $X=L_{\overline{p}}(\T^d)$, $\overline{p}=(p_1,\dots,p_d)$, $0<p_i\le \infty$, $i=1,\dots,d$. Note that if $p_1=\dots=p_d=p$, we obtain the MZ inequalities in the classical case of $L_p(\T^d)$, $0<p\le\infty$, cf.~\eqref{mzclassical};

    \item[$(ii)$] $X=L_{p,w}(\T)$, $0<p\le \infty$, where $w$ is a doubling or non-doubling weight satisfying certain natural conditions, see Example~\ref{exbb}(ii) for the case $p\ge 1$ and~\cite{E99} and~\cite{BT17} for
    the case $0<p<1$.
    \end{itemize}
\end{example}


\subsection{More Marcinkiewicz--Zygmund inequalities in Orlicz spaces}
In this section, we specify some of the above MZ inequalities to the case of the Orlicz spaces $L_\Phi=L_\Phi(\T)$ endowed with the Lebesgue measure, by tracking the corresponding constants and  considering non-uniform nodes.

We  suppose that the function $\Phi$ satisfies only conditions 1) and 2) of Definition~\ref{defO}.
Note that under these conditions $\Phi$ is not necessarily an Orlicz function (take $\Phi(t)=t$). In what follows, together with the Luxemburg norm $\|f\|_{\Phi}$, it will be convenient to use the following Orlicz norm, see~\cite[p.~222]{KR61}:
\begin{equation}\label{n1}
\|f\|^\sharp_{\Phi,d\mu}=\inf_{\kappa>0}\left\{\frac{1}{\kappa}\(1+\int_{\T}\Phi(\kappa f(t))d\mu(t)\) \right\}.
\end{equation}
If $d\mu(t)=dt$, then we will simple write $\|f\|^\sharp_{\Phi,d\mu}=\|f\|^\sharp_{\Phi}$.
It is well known (see, e.g., ~\cite[\S~9, (9.24)]{KR61}) that,
 for any $f\in L_{\Phi}(\T)$,
 \begin{equation}\label{4}
     \|f\|_{\Phi}\le \|f\|^\sharp_{\Phi}\le 2\|f\|_{\Phi}.
 \end{equation}

We will also need the following simple fact, which directly follows from~\eqref{n1}.
\begin{lemma}\label{le1}
 Let functions $f, g$ and measures $\mu_1$ and $\mu_2$ be such that, for any $\kappa>0$,
 \begin{equation*}
   \int_{\T} \Phi(\kappa f(t))d\mu_1 \le C \int_{\T} \Phi(\kappa g(t))d\mu_2,
 \end{equation*}
 where $C$ is some positive constant. Then
 \begin{equation*}
   \|f\|^\sharp_{\Phi,d\mu_1}\le\max\{C,1\}\|f\|^\sharp_{\Phi,d\mu_2}.
 \end{equation*}
 \end{lemma}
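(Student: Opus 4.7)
The plan is to read the inequality directly off the definition~\eqref{n1} after inserting the hypothesized bound inside the infimum. The statement is essentially a bookkeeping lemma about how a multiplicative constant inside the integral propagates through the Orlicz-type norm, so the proof should be short and direct.

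Concretely, I would fix an arbitrary $\kappa>0$ and observe that the hypothesis gives
\begin{equation*}
\frac{1}{\kappa}\Bigl(1+\int_{\T}\Phi(\kappa f(t))\,d\mu_1\Bigr)\le \frac{1}{\kappa}\Bigl(1+C\int_{\T}\Phi(\kappa g(t))\,d\mu_2\Bigr).
\end{equation*}
The only genuine manipulation is pulling the constant $C$ out of the parenthesis on the right, and this is exactly what forces the $\max\{C,1\}$ in the conclusion. If $C\ge 1$, one uses $1\le C$ to bound the right-hand side by
\begin{equation*}
\frac{C}{\kappa}\Bigl(1+\int_{\T}\Phi(\kappa g(t))\,d\mu_2\Bigr);
\end{equation*}
if $C<1$, one replaces $C\int_{\T}\Phi(\kappa g)\,d\mu_2$ by $\int_{\T}\Phi(\kappa g)\,d\mu_2$ to bound it by $\frac{1}{\kappa}(1+\int_{\T}\Phi(\kappa g(t))\,d\mu_2)$. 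In either case the bound takes the common form $\max\{C,1\}\cdot \frac{1}{\kappa}(1+\int_{\T}\Phi(\kappa g(t))\,d\mu_2)$.

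Finally, I would take the infimum over $\kappa>0$ on both sides and reapply~\eqref{n1} to obtain the stated inequality. There is no real obstacle here; the only point worth flagging is the two-case split, which is needed precisely because the additive $1$ inside the parenthesis in~\eqref{n1} is insensitive to $C$, so a bare factor of $C$ would not suffice when $C<1$.
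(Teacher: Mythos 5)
Your argument is correct and is exactly the direct verification the paper has in mind: the paper gives no proof at all, stating only that the lemma ``directly follows from~\eqref{n1}'', and your pointwise-in-$\kappa$ bound followed by taking the infimum, with the two-case split explaining the $\max\{C,1\}$, is that omitted argument. One small point: your derivation naturally produces $\|g\|^\sharp_{\Phi,d\mu_2}$ on the right-hand side, which is what the conclusion should read (and how the lemma is actually applied in the proof of Theorem~\ref{th3}); the $\|f\|^\sharp_{\Phi,d\mu_2}$ in the statement is a typo.
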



Similarly as in~\eqref{n0}, for $X_m=(x_j)_{j=1}^m\subset \T$ and $(a_j)_{j=1}^m\subset \C$, we denote
\begin{equation*}\label{n2}
  \|f\|^\sharp_{\Phi,X_m}
  =\inf_{\kappa>0} \left\{\frac1{\kappa}\(1+\frac{2\pi}{m} \sum_{j=1}^m \Phi \big(\kappa f(x_j)\big)\)\right\}
\end{equation*}
and
\begin{equation*}
  \|(a_j)\|^\sharp_{\Phi,m}=\inf_{\kappa>0} \left\{\frac1{\kappa}\(1+\frac{2\pi}{m} \sum_{j=1}^m  \Phi(\kappa a_j)\)\right\}.
\end{equation*}

We will make use of the following result, which improves Theorem~2 in~\cite{LMN87}.

\begin{lemma}\label{le2} {\sc (See~\cite[Theorem 2.3]{Jo99}.)}
    Let $\Phi$ be an even, convex, non-negative, and non-decreasing function on $[0,\infty)$. Let $(\t_k)_{k=1}^m$, $m\in \N$, be such that
  $0\le \t_1<\t_2<\dots<\t_m<2\pi$ and $\d=\min_k(\t_{k+1}-\t_k)$, where $\t_{m+1}=2\pi+\t_1$. Then
\begin{equation}\label{8}
 \sum_{k=1}^m \Phi(T_n(\t_k))\le \(\frac{n+1}{2\pi}+\frac1\d\)\int_\T \Phi(eT_n(t)) dt,\quad T_n\in \mathcal{T}_n.
  \end{equation}
\end{lemma}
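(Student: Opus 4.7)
\emph{Proof plan.} The strategy will be the standard one for turning a Marcinkiewicz--Zygmund type bound for trigonometric polynomials into a majorization by an integral: thicken each node $\tau_k$ into a short interval $I_k$ on which $|T_n|$ remains comparable to $|T_n(\tau_k)|$, and then use the monotonicity of $\Phi$ to pass from a sum of values to an integral.

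First I would construct the intervals. Setting $h=1\big/\bigl(\tfrac{n+1}{2\pi}+\tfrac1\delta\bigr)$ gives $h\le\delta$ and $h\le\tfrac{2\pi}{n+1}$ simultaneously. For each $k$, I would take $I_k\subset[\tau_k,\tau_k+h)$ of length $h$; the bound $h\le\delta$ guarantees that the intervals $\{I_k\}$ fit inside consecutive gaps and are pairwise disjoint, while $h\le 2\pi/(n+1)$ will make them short enough to apply a local Bernstein-type estimate on each.

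The hard part will be establishing the local comparability
\begin{equation*}
  |T_n(\tau_k)|\le e\,|T_n(t)|,\qquad t\in I_k.
\end{equation*}
A direct use of the classical Bernstein inequality $\|T_n'\|_\infty\le n\|T_n\|_\infty$ yields only the additive estimate $|T_n(\tau_k)-T_n(t)|\le n\|T_n\|_\infty h$, which is useless when $|T_n(\tau_k)|$ is much smaller than $\|T_n\|_\infty$. One should instead rely on a pointwise, logarithmic Bernstein-type bound: away from zeros of $T_n$, the derivative of $\log|T_n|$ is controlled by $n$, so integrating gives $\bigl|\log|T_n(\tau_k)|-\log|T_n(t)|\bigr|\le n|t-\tau_k|\le\tfrac{2\pi n}{n+1}<2\pi$. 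Refining this by either a Chebyshev-type extremal comparison on $I_k$ or a telescoping argument of the form $(1+1/m)^m\to e$ will bring the multiplicative constant down to the sharp value $e$.

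With the comparability in hand, the conclusion is routine. Since $\Phi$ is non-negative, non-decreasing, and even (so $\Phi(T_n)=\Phi(|T_n|)$), the local bound gives $\Phi(T_n(\tau_k))\le\Phi(eT_n(t))$ for $t\in I_k$; integrating over $I_k$, summing over $k$, and using the pairwise disjointness of the $\{I_k\}$ yields
\begin{equation*}
  h\sum_{k=1}^m\Phi(T_n(\tau_k))\le\int_\T\Phi(eT_n(t))\,dt,
\end{equation*}
and dividing by $h$ produces exactly~\eqref{8}.
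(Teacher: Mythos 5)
The paper gives no proof of this lemma; it is quoted verbatim from Joung \cite[Theorem 2.3]{Jo99} (a descendant of the Lubinsky--M\'at\'e--Nevai quadrature-sum estimates), so your attempt has to be measured against that argument. Your bookkeeping is fine (the choice of $h$, the disjointness of the $I_k$, the final division by $h$), but the step on which everything rests --- the pointwise comparability $|T_n(\tau_k)|\le e\,|T_n(t)|$ for \emph{all} $t\in I_k$ --- is false, and neither of your proposed repairs can rescue it. Take $m=1$, $\tau_1=0$ (so $\delta=2\pi$ and $h=2\pi/(n+2)$) and $T_n(t)=\cos nt$: then $T_n(\tau_1)=1$ while $T_n\bigl(\pi/(2n)\bigr)=0$ and $\pi/(2n)<2\pi/(n+2)$ for every $n\ge 1$, so $I_1$ contains a zero of $T_n$ and the claimed inequality fails there. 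The suggested fix via ``the derivative of $\log|T_n|$ is controlled by $n$ away from zeros'' is also false: $(\log|T_n|)'=T_n'/T_n$ is unbounded near any zero of $T_n$ (it equals $-n\tan nt$ in the example above), and no Chebyshev-type or telescoping refinement yields a uniform multiplicative bound on an interval that may contain a zero. Note finally that the trivially true bound $|T_n(\tau_k)|\le\max_{t\in \overline{I_k}}|T_n(t)|$ would not suffice either, since the right-hand side of \eqref{8} is an average over $I_k$, not a maximum.

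The proof in the cited source must therefore retain the derivative term instead of absorbing it into a pointwise constant. One applies the elementary inequality $F(\tau_k)\le\frac1{|I_k|}\int_{I_k}F+\frac12\int_{I_k}|F'|$ to $F=\Phi\circ T_n\ge0$ on pairwise disjoint intervals of length $\delta$ about the nodes and sums, obtaining
$$
\sum_{k=1}^m\Phi(T_n(\tau_k))\le\frac1\delta\int_{\T}\Phi(T_n(t))\,dt+\frac12\int_{\T}\bigl|(\Phi\circ T_n)'(t)\bigr|\,dt .
$$
The entire content of the lemma is then the integrated Bernstein-type estimate
$$
\int_{\T}\bigl|(\Phi\circ T_n)'(t)\bigr|\,dt\le\frac{n+1}{\pi}\int_{\T}\Phi(eT_n(t))\,dt ,
$$
which is proved from the convexity of $\Phi$ (Young-type inequalities such as $u\Phi'(u)\le n\bigl(\Phi((1+\tfrac1n)u)-\Phi(u)\bigr)$ for $u\ge 0$) combined with an interpolation formula for $T_n'$; the constant $e$ arises there from $(1+\tfrac1n)^n\le e$, not from any pointwise comparison of $|T_n(\tau_k)|$ with $|T_n(t)|$. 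This integrated derivative estimate is the missing idea in your plan, and without it the approach does not close.
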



We will also need the  Bernstein inequality
\begin{equation}\label{berOLD}
  \|T_n'\|_{\Phi} \le n\|T_n\|_{\Phi}, \quad T_n\in \mathcal{T}_n,
\end{equation}
which immediately follows from the classical Bernstein inequality $\|T_n'\|_{\infty} \le n\|T_n\|_{\infty}$ and Remark~\ref{remm2.5}.




\begin{theorem}\label{th3}
Let $\Phi$ be an even, convex, non-negative, and non-decreasing function on $[0,\infty)$. Suppose that $X_N=(\t_k)_{k=1}^N$, $N\in \N$, be such that
  $0\le \t_1<\t_2<\dots<\t_N<2\pi$
  and
  $$
  \d=\min_k(\t_{k+1}-\t_k),
  $$
  $$
  \l=\max_k(\t_{k+1}-\t_k),
  $$
  where $\t_{N+1}=2\pi+\t_1$. Then
    \begin{equation}\label{9}
      \|T_n\|^\sharp_{\Phi,X_N}\le \frac{e(n+1+\frac{2\pi}\d)}N\|T_n\|^\sharp_{\Phi},\quad T_n\in \mathcal{T}_n.
  \end{equation}
If, additionally,   $N \min\left\{1,\frac{4\pi}{\l N}\right\} >4e\l n(n+1+\frac{2\pi}{\d})$, then
    \begin{equation}\label{10}
      \|T_n\|^\sharp_{\Phi}\le \frac{2}{\min\left\{1,\frac{4\pi}{\l N}\right\}-{4e\l n(n+1+\frac{2\pi}{\d})}N^{-1}}\|T_n\|^\sharp_{\Phi,X_N},\quad T_n\in \mathcal{T}_n.
  \end{equation}
\end{theorem}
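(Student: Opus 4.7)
The inequality~\eqref{9} should follow directly from Lemma~\ref{le2} and Lemma~\ref{le1}. Replacing $T_n$ by $\kappa T_n$ in Lemma~\ref{le2} and multiplying both sides by $2\pi/N$ gives
\[
\frac{2\pi}{N}\sum_{k=1}^N \Phi(\kappa T_n(\tau_k)) \le R\int_\T \Phi(e\kappa T_n(t))\,dt, \qquad R:=\frac{n+1+2\pi/\delta}{N}.
\]
Since $\sum_k(\tau_{k+1}-\tau_k)=2\pi$ forces $N\delta\le 2\pi$, one has $R\ge 1$, so Lemma~\ref{le1} applied with $d\mu_1=\frac{2\pi}{N}\sum_{k=1}^N\delta_{\tau_k}$, $d\mu_2=dt$, $f=T_n$, $g=eT_n$, and $C=R$ yields $\|T_n\|^\sharp_{\Phi,X_N}\le\max\{R,1\}\cdot e\|T_n\|^\sharp_\Phi = eR\|T_n\|^\sharp_\Phi$, which is exactly~\eqref{9}.

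For~\eqref{10}, my plan is to split $T_n=\tilde T_n+(T_n-\tilde T_n)$, where $\tilde T_n(t):=\sum_{k=1}^N T_n(\tau_k)\chi_{[\tau_k,\tau_{k+1})}(t)$ is the piecewise-constant interpolant, and to invoke the triangle inequality $\|T_n\|^\sharp_\Phi\le\|\tilde T_n\|^\sharp_\Phi+\|T_n-\tilde T_n\|^\sharp_\Phi$. For the step function, the identity $\int_\T\Phi(\kappa\tilde T_n)\,dt=\sum_k(\tau_{k+1}-\tau_k)\Phi(\kappa T_n(\tau_k))\le\lambda\sum_k\Phi(\kappa T_n(\tau_k))=\tfrac{\lambda N}{2\pi}\cdot\tfrac{2\pi}{N}\sum_k\Phi(\kappa T_n(\tau_k))$, combined with the elementary observation that $1+\tfrac{\lambda N}{2\pi}X\le\max\{2,\tfrac{\lambda N}{2\pi}\}(1+X)$ for all $X\ge 0$, produces via Lemma~\ref{le1} the estimate
\[
\|\tilde T_n\|^\sharp_\Phi\le \max\Big\{2,\tfrac{\lambda N}{2\pi}\Big\}\|T_n\|^\sharp_{\Phi,X_N} = \frac{2}{\min\{1,4\pi/(\lambda N)\}}\|T_n\|^\sharp_{\Phi,X_N},
\]
which accounts for the numerator $2$ and the $\min$ factor in the denominator of~\eqref{10}. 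For the remainder, $|T_n(t)-\tilde T_n(t)|\le\int_{\tau_{k(t)}}^{\tau_{k(t)+1}}|T_n'(s)|\,ds$ whenever $t\in[\tau_{k(t)},\tau_{k(t)+1})$; Jensen's inequality applied inside each subinterval, together with the uniform bound $\tau_{k+1}-\tau_k\le\lambda$, yields $\int_\T\Phi(\kappa|T_n-\tilde T_n|)\,dt\le\int_\T\Phi(\kappa\lambda|T_n'(s)|)\,ds$, and a further application of Lemma~\ref{le1} gives $\|T_n-\tilde T_n\|^\sharp_\Phi\le\lambda\|T_n'\|^\sharp_\Phi$.

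The last step is to replace $\|T_n'\|^\sharp_\Phi$ by $\|T_n\|^\sharp_\Phi$ with the precise coefficient appearing in~\eqref{10}. Combining the Bernstein inequality~\eqref{berOLD} with the equivalence~\eqref{4} gives the crude bound $\|T_n'\|^\sharp_\Phi\le 2n\|T_n\|^\sharp_\Phi$; to recover the $R$-dependence one applies~\eqref{9} to $T_n'\in\mathcal{T}_n$. Pairing the resulting factor $eR$ with the factor $2n$ from Bernstein, the factor $2$ from~\eqref{4}, and the $\max\{2,\lambda N/(2\pi)\}$ case split should yield a bound of the form $\|T_n-\tilde T_n\|^\sharp_\Phi\le\tfrac{4e\lambda nR}{\min\{1,4\pi/(\lambda N)\}}\|T_n\|^\sharp_\Phi$; substituting both estimates into the triangle inequality and solving for $\|T_n\|^\sharp_\Phi$ under the hypothesis $N\min\{1,4\pi/(\lambda N)\}>4e\lambda n(n+1+2\pi/\delta)$ will then produce~\eqref{10}. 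The main obstacle is exactly this last matching step: arranging the constants $e$ (from~\eqref{9}), $2$ (from~\eqref{4}), and $2$ (from the case split in the $\tilde T_n$ bound) so that they multiply together with $\lambda nR$ and combine with the $\min$ factor in precisely the form demanded by the statement of~\eqref{10}.
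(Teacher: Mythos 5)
Your proof of \eqref{9} is essentially the paper's: Lemma~\ref{le2} combined with Lemma~\ref{le1} for the discrete versus continuous measures, plus the observation that $N\delta\le 2\pi$ makes the $\max$ harmless. For \eqref{10} you take a genuinely different route. The paper splits $\T$ into the alternating arcs $M'=\bigcup_j[\tau_{2j-1},\tau_{2j}]$ and $M''=\bigcup_j[\tau_{2j},\tau_{2j+1}]$, bounds $\|T_n\chi_{M'}\|^\sharp_{\Phi}$ by the discrete norm at the (unknown) maximum points $\eta_{2j-1}$, and then transfers back to the prescribed nodes by a perturbation argument; because the error $T_n(\eta_{2j-1})-T_n(\tau_{2j-1})$ lives in a \emph{discrete} norm, the paper must apply \eqref{9} a second time (to $\|T_n'\|^\sharp_{\Phi,Z'_{N/2}}$) before invoking Bernstein, which is exactly where the factor $n+1+\frac{2\pi}{\delta}$ in the error term of \eqref{10} originates. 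You instead compare $T_n$ with its piecewise-constant interpolant $\tilde T_n$ in the \emph{continuous} norm: $\|\tilde T_n\|^\sharp_{\Phi}$ is controlled directly by $\|T_n\|^\sharp_{\Phi,X_N}$ (your identity $\max\{2,\frac{\lambda N}{2\pi}\}=2/\min\{1,\frac{4\pi}{\lambda N}\}$ is correct), while the remainder is handled by the fundamental theorem of calculus and Jensen, giving $\|T_n-\tilde T_n\|^\sharp_{\Phi}\le\lambda\|T_n'\|^\sharp_{\Phi}\le 2\lambda n\|T_n\|^\sharp_{\Phi}$; no second use of \eqref{9} is needed. The only place you go astray is the final ``matching'' step: applying \eqref{9} to $T_n'$ bounds the discrete norm of $T_n'$ by its continuous one, which is the wrong direction and cannot convert $\|T_n'\|^\sharp_{\Phi}$ into $\|T_n\|^\sharp_{\Phi}$. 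But the obstacle you flag is illusory: since $\delta\le\frac{2\pi}{N}$ forces $n+1+\frac{2\pi}{\delta}\ge N$, and $\min\{1,\frac{4\pi}{\lambda N}\}\le 1$, one has
\begin{equation*}
2\lambda n\ \le\ \frac{4e\lambda n\bigl(n+1+\frac{2\pi}{\delta}\bigr)}{N\min\{1,\frac{4\pi}{\lambda N}\}}
\end{equation*}
trivially, so your crude bound $\|T_n-\tilde T_n\|^\sharp_{\Phi}\le 2\lambda n\|T_n\|^\sharp_{\Phi}$ already dominates the error term demanded by \eqref{10}; substituting both estimates into the triangle inequality and solving for $\|T_n\|^\sharp_{\Phi}$ (the hypothesis guarantees $2\lambda n<\frac{1}{2e}<1$, so the coefficient is positive) yields \eqref{10}, in fact with the slightly sharper denominator $\min\{1,\frac{4\pi}{\lambda N}\}(1-2\lambda n)$. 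So your argument closes, and what each approach buys is this: the paper's arc decomposition keeps everything at the level of discrete norms (at the cost of a second application of \eqref{9} and a weaker constant), whereas your interpolant decomposition works in the continuous norm throughout and gives a cleaner, marginally stronger estimate.
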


\begin{proof}
  First, we prove inequality~\eqref{9}. Applying~\eqref{8} and Lemma~\ref{le1} for $f=T_n$, $g=eT_n$, and
  $$
  \mu_1(x)=\frac{2\pi}{N}\sum_{j=1}^N\delta(x-\t_j),\quad \mu_2(x)=x,
  $$
  we have
  \begin{equation*}
    \|T_n\|^\sharp_{\Phi,X_N}=\|T_n\|^\sharp_{\Phi,d\mu_1}\le \max\left\{\frac{n+1+\frac{2\pi}\d}N,1\right\}\|e T_n\|^\sharp_{\Phi},
  \end{equation*}
  which, in view of the norm homogeneity and the inequality $\d\le\frac{2\pi}{N}$, implies~\eqref{9}.

  Second, we show~\eqref{10}. Without loss of generality, we may assume that $N$ is even. Denote
  $$
  M'=\bigcup_{j=1}^{N/2} [\t_{2j-1},\t_{2j}], \quad
  M''=\bigcup_{j=1}^{N/2} [\t_{2j},\t_{2j+1}].
  $$
  We have
  $$
  T_n(x)=T_n(x)\chi_{M'}(x)+T_n(x)\chi_{M''}(x),
  $$
  which gives
  \begin{equation}\label{11}
    \|T_n\|^\sharp_{\Phi}\le \|T_n\chi_{M'}\|^\sharp_{\Phi}+\|T_n\chi_{M''}\|^\sharp_{\Phi}.
  \end{equation}
  Let $\eta_{2j-1}\in [\t_{2j-1},\t_{2j}]$ be such that
  $$
  |T_n(\eta_{2j-1})|=\max_{t\in [\t_{2j-1},\t_{2j}]}|T_n(t)|,\quad j=1,\dots,\frac N2.
  $$
  Then, taking into account
  Lemma~\ref{le2}, we have
  \begin{equation}\label{12}
    \begin{split}
      \|T_n\chi_{M'}\|^\sharp_{\Phi}&=
      \inf_{\kappa>0}\left\{\frac{1}{\kappa}\(1+\sum_{j=1}^{N/2}
      \int_{\t_{2j-1}}^{\t_{2j}}\Phi(\kappa T_n(t))dt\) \right\}\\
      &\le \inf_{\kappa>0}\left\{\frac{1}{\kappa}\(1+\sum_{j=1}^{N/2}
      \Phi(\kappa T_n(\eta_{2j-1})) (\t_{2j}-\t_{2j-1})\) \right\}\\
      &\le \inf_{\kappa>0}\left\{\frac{1}{\kappa}\(1+\l\sum_{j=1}^{N/2}
      \Phi(\kappa T_n(\eta_{2j-1}))\) \right\}\le \max\{1,\tfrac{\l N}{4\pi}\}\|T_n\|^\sharp_{\Phi,Y'_{N/2}},
    \end{split}
  \end{equation}
  where $Y'_{N/2}=(\eta_{2j-1})_{j=1}^{N/2}$. Similarly, we obtain that
  \begin{equation}\label{13}
    \|T_n\chi_{M''}\|^\sharp_{\Phi}\le \|T_n\|^\sharp_{\Phi,Y''_{N/2}},
  \end{equation}
  where $Y''_{N/2}=(\eta_{2j})_{j=1}^{N/2}$ with $\eta_{2j}\in [\t_{2j},\t_{2j+1}]$  such that
  $|T_n(\eta_{2j})|=\max_{t\in [\t_{2j},\t_{2j+1}]}|T_n(t)|,\, j=1,\dots,N/2$.
  Denoting $X'_{N/2}=(\t_{2j-1})_{j=1}^{N/2}$, $X''_{N/2}=(\t_{2j})_{j=1}^{N/2}$ and   combining~\eqref{11}, \eqref{12}, and \eqref{13}, we obtain
  \begin{equation}\label{14}
    \begin{split}
       \tfrac{1}{\max\left\{1,\tfrac{\l N}{4\pi}\right\}}\|T_n\|^\sharp_{\Phi}-&\(\|T_n\|^\sharp_{\Phi,X'_{N/2}}+\|T_n\|^\sharp_{\Phi,X''_{N/2}}\)\\
       &\le
       \(\|T_n\|^\sharp_{\Phi,Y'_{N/2}}-\|T_n\|^\sharp_{\Phi,X'_{N/2}}\)+
       \(\|T_n\|^\sharp_{\Phi,Y''_{N/2}}-\|T_n\|^\sharp_{\Phi,X''_{N/2}}\):=I_1+I_2.
    \end{split}
  \end{equation}
  Next, by the mean value theorem there exists a set of points $Z'_{N/2}=(\xi_{2j-1})_{j=1}^{N/2}$, $\xi_{2j-1}\in [\tau_{2j-1},\eta_{2j-1}]$, such that
  \begin{equation}\label{15}
    \begin{split}
      I_1&\le \|(T_n(\eta_{2j-1})-T_n(\t_{2j-1}))\|^\sharp_{\Phi,N/2}
        =\|T_n'(\xi_{2j-1})(\eta_{2j-1}-\t_{2j-1})\|^\sharp_{\Phi,N/2}\\
        &\le\l\|T_n'\|^\sharp_{\Phi,Z'_{N/2}}
        \le \frac{2e\l (n+1+\frac{2\pi}{\d'})}N
        \|T_n'\|^\sharp_{\Phi},
    \end{split}
  \end{equation}
  where the last inequality follows from~\eqref{9} with
  $\d'=\min_{j=1,\dots,N/2}\{\xi_{2j+1}-\xi_{2j-1}\}$. Then, applying the Bernstein inequality~\eqref{berOLD} to the right-hand side of~\eqref{15}, we obtain
  \begin{equation}\label{16}
    \begin{split}
      I_1
        \le \frac{2e\l n(n+1+\frac{2\pi}{\d})}N
        \|T_n\|^\sharp_{\Phi}.
    \end{split}
  \end{equation}
  Similarly, we get
    \begin{equation}\label{17}
    \begin{split}
      I_2
        \le \frac{2e\l n(n+1+\frac{2\pi}{\d})}N
        \|T_n\|^\sharp_{\Phi}.
    \end{split}
  \end{equation}
Now, combining~\eqref{14}, \eqref{16}, and \eqref{17}, we derive
  \begin{equation*}
    \begin{split}
      \({\min\left\{1,\frac{4\pi}{\l N}\right\}}-\frac{4e\l
 n(n+1+\frac{2\pi}{\d})}N\)\|T_n\|^\sharp_{\Phi}
      \le \|T_n\|^\sharp_{\Phi,X'_{N/2}}+\|T_n\|^\sharp_{\Phi,X''_{N/2}}\le 2\|T_n\|_{\Phi,X_N}^\sharp,
    \end{split}
  \end{equation*}
  which proves~\eqref{10}.
\end{proof}

We present one more MZ inequalities for $2n+1$ non-equidistant nodes.
%
\begin{theorem}\label{th4}
 Let $\Phi$ be an even, convex, non-negative, and non-decreasing function on $[0,\infty)$.  Let  $Y_{2n+1}=(y_j)_{j=0}^{2n}$ be such that
  $$
  \frac{2\pi(j-\s)}{2n+1}\le y_j <\frac{2\pi(j+\s)}{2n+1},\quad j=0,\dots,2n,
  $$
for some $\s\in (0,1/4)$. Then
  \begin{equation}\label{18}
    \|T_n\|^\sharp_{\Phi,Y_{2n+1}}\le \frac{8e}{3}\|T_n\|^\sharp_{\Phi},\quad T_n\in \mathcal{T}_n.
  \end{equation}
  If, additionally, $\Phi$ is an Orlicz function satisfying $\Phi\in \Delta_2$ and $\Phi^*\in \Delta_2$, and $\s>0$ is such that  $\s<\frac1{16 e\pi C_\Phi}$, where $C_\Phi$ is defined in~\eqref{sn}, then
  \begin{equation}\label{19}
   \frac13\(\frac1{2C_\Phi}-8e\pi\s\) \|T_n\|^\sharp_{\Phi}\le \|T_n\|^\sharp_{\Phi,Y_{2n+1}},\quad T_n\in \mathcal{T}_n.
  \end{equation}
\end{theorem}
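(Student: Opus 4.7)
The statement splits into two inequalities, and my plan is to prove~\eqref{18} first and then derive~\eqref{19} from it.

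For~\eqref{18}, I will apply Lemma~\ref{le2} directly to $T_n$ at the nodes $y_0,\dots,y_{2n}$. The hypothesis on $y_j$ together with the periodic identification $y_{2n+1}=y_0+2\pi$ forces the minimal gap $\d=\min_j(y_{j+1}-y_j)\ge\frac{2\pi(1-2\s)}{2n+1}$. Plugging this into Lemma~\ref{le2} and multiplying through by $\frac{2\pi}{2n+1}$, one obtains a pointwise-in-$\kappa$ comparison between the discrete measure $\frac{2\pi}{2n+1}\sum_j\delta(\cdot-y_j)$ and Lebesgue measure with constant $\frac{n+1}{2n+1}+\frac{1}{1-2\s}$. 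For $n\ge 1$ this is at most $\tfrac23+\frac{1}{1-2\s}<\tfrac83$ whenever $\s<1/4$, so Lemma~\ref{le1} with $f=T_n$ and $g=eT_n$ delivers~\eqref{18} with constant $\tfrac83\cdot e=\tfrac{8e}{3}$; the case $n=0$ is trivial.

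For~\eqref{19}, the idea is to insert the uniform grid $X_{2n+1}=\bigl(\tfrac{2\pi j}{2n+1}\bigr)_{j=0}^{2n}$ between the two norms. By Theorem~\ref{th1} with $d=1$, together with the two-sided equivalence~\eqref{4} applied on both the continuous and the discrete sides, one gets $\|T_n\|^\sharp_\Phi\le 6C_\Phi\|T_n\|^\sharp_{\Phi,X_{2n+1}}$. The triangle inequality for the discrete Orlicz norm yields
\[
\|T_n\|^\sharp_{\Phi,X_{2n+1}}\le\|T_n\|^\sharp_{\Phi,Y_{2n+1}}+\|(T_n(x_j)-T_n(y_j))\|^\sharp_{\Phi,2n+1}.
\]
I will apply the mean value theorem to produce intermediate points $\xi_j$ between $x_j=\tfrac{2\pi j}{2n+1}$ and $y_j$; these automatically satisfy $|\xi_j-x_j|<\tfrac{2\pi\s}{2n+1}$, so the set $Z_{2n+1}=(\xi_j)_{j=0}^{2n}$ obeys the same geometric hypothesis as $Y_{2n+1}$, and~\eqref{18} can be applied to $T_n'\in\mathcal{T}_n$ at $Z_{2n+1}$. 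Combined with the Bernstein inequality~\eqref{berOLD}, which holds for the translation-invariant norm $\|\cdot\|^\sharp_\Phi$ with constant $n$ in view of Remark~\ref{remm2.5}, and with the monotonicity of the discrete Orlicz norm, the error term is bounded by $\tfrac{2\pi\s}{2n+1}\cdot\tfrac{8e}{3}\cdot n\|T_n\|^\sharp_\Phi\le\tfrac{8e\pi\s}{3}\|T_n\|^\sharp_\Phi$ (using $n/(2n+1)\le 1/2$). Rearranging yields~\eqref{19} exactly under the smallness assumption $\s<\tfrac{1}{16e\pi C_\Phi}$.

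The main obstacle I anticipate is the bookkeeping of the numerical constants. The factor $\tfrac{8e}{3}$ in~\eqref{18} is tight because $\tfrac{n+1}{2n+1}=\tfrac23$ at $n=1$ and $\tfrac{1}{1-2\s}\to 2$ as $\s\to 1/4$, so no slack is available there. Similarly, the factor $\tfrac{8e\pi}{3}$ in~\eqref{19} is achieved only if the Bernstein inequality on $\|\cdot\|^\sharp_\Phi$ holds with constant exactly $n$; this requires verifying that the Orlicz norm is translation invariant on $L_\Phi(\T)$ so that Remark~\ref{remm2.5} applies with $c=1$, which in turn is a straightforward consequence of the fact that $\int_\T\Phi(\kappa|f(\cdot+t)|)\,dx$ is independent of $t$.
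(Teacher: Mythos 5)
Your proposal is correct and follows essentially the same route as the paper: inequality~\eqref{18} via the large-sieve bound of Lemma~\ref{le2} combined with Lemma~\ref{le1} (which is precisely how the paper's inequality~\eqref{9} is obtained, the paper simply citing~\eqref{9} with $N=2n+1$), and inequality~\eqref{19} by comparing with the uniform grid $X_{2n+1}$ through Theorem~\ref{th1}, the mean value theorem, the first part applied to $T_n'$ at the intermediate points, and the Bernstein inequality~\eqref{berOLD}. The only cosmetic difference is that you keep the sharp (Amemiya) norm on the discrete side throughout, whereas the paper works with the Luxemburg discrete norm and converts via~\eqref{4} at the end; the constants come out identically.
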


\begin{proof}
  The proof of inequality~\eqref{18} follows directly from~\eqref{9} with $N=2n+1$.  Indeed, for $\s\le 1/4$, we have $\d\ge \frac{2\pi}{2n+1}(1-2\s)\ge \frac{\pi}{2n+1}$. Hence,~\eqref{9} gives
  \begin{equation*}
    \begin{split}
         \|T_n\|^\sharp_{\Phi,2n+1}\le \frac{e(n+1+2(2n+1))}{2n+1}\|T_n\|^\sharp_{\Phi}\le \frac{8e}{3}\|T_n\|^\sharp_{\Phi}.
     \end{split}
  \end{equation*}

  Now we show~\eqref{19}. Let $X_{2n+1}=(x_j)_{j=0}^{2n}$, where $x_j =\frac{2 \pi j }{2n +1}$. By Theorem~\ref{th1} and~\eqref{4}, we have
  \begin{equation}\label{20}
    \begin{split}
      \frac1{6C_\Phi}\|T_n\|_{\Phi}^\sharp &\le  \frac1{3C_\Phi}\|T_n\|_{\Phi }\le \|T_n\|_{\Phi,X_{2n+1}}\\
      &\le \|(T_n(x_j)-T_n(y_j))\|_{\Phi,{2n+1}}+\|T_n\|_{\Phi,Y_{2n+1}}.
    \end{split}
  \end{equation}
  By the mean value theorem, there exists a set of points $Z_{2n+1}:=(\xi_{j})_{j=0}^{2n}$, $\xi_{j}\in [\min(x_j,y_j), \max(x_j,y_j)]$, such that
  \begin{equation}\label{21}
    \begin{split}
      \|(T_n(x_j)-T_n(y_j))\|_{\Phi,{2n+1}}=\|(T_n'(\xi_j)|x_j-y_j|)\|_{\Phi,{2n+1}}\le \frac{2\pi\s}{2n+1}\|T_n'\|_{\Phi,Z_{2n+1}}.
    \end{split}
  \end{equation}
 Next, by~\eqref{9} with $N=2n+1$  and
 $$
 \d=\min\{\xi_{j+1}-\xi_j\}\ge\frac{2\pi(1-2\s)}{2n+1},
 $$
 we get
  \begin{equation*}
    \begin{split}
      \|T_n'\|_{\Phi,Z_{2n+1}}&\le
      \frac{e(n+1+\frac{2n+1}{1-2\s})}{2n+1} \|T_n'\|_{\Phi}\le \frac{8e}{3}\|T_n'\|_{\Phi}.
    \end{split}
  \end{equation*}
  This, together with~\eqref{21} and the Bernstein inequality~\eqref{berOLD}, gives
  \begin{equation}\label{22}
    \|(T_n(x_j)-T_n(y_j))\|_{\Phi,{{2n+1}}}\le \frac{2\pi\s n}{2n+1}\cdot\frac{8e}{3}\|T_n\|_{\Phi}
    \le \frac{8 e\pi \s}{3}\|T_n\|_{\Phi}.
  \end{equation}
Finally, combining~\eqref{20} and~\eqref{22}  and applying~\eqref{4}, we obtain~\eqref{19}.
\end{proof}


\subsection{Historical notes}
In the classical case $X=L_p(\T^d)$ as well as for the mixed Lebesgue spaces $L_{\overline{p}}(\T^d)$, MZ inequality~\eqref{TrMZ} with $1<p<\infty$ is well known,  see, e.g.,~\cite[Ch.~X, \S~7]{Z} and \cite[3.3.18]{Tem}. For the case $0<p<1$, see \cite{Pe83} and~\cite{RS98}.

When $X=L_\Phi(\T)$ is the Orlicz space, relation~\eqref{EEE1t} was proved in~\cite{PW22} under the condition that the Hilbert transform is bounded in $X$. One sided MZ inequalities in the Orlicz spaces follow also directly from the results of the work~\cite{LMN87}.
See also \cite{kosov} for the recent sampling discretization results in Orlicz norms on finite dimensional spaces.

In the case of the weighted spaces $X=L_{p,w}(\T)$, $0<p<\infty$, Mastroianni and Totik~\cite{MT00} and Erd\'elyi~\cite{E99} proved~\eqref{TrMZ} for doubling weights $w$.
Necessary and sufficient conditions for MZ inequalities in $L_p(\T)$ to hold were obtained by Chui et al. in~\cite{CZ99}, \cite{CZ93} and by Ortega-Cerd\'a and Saludes in~\cite{OP07}. Some versions of MZ inequalities for trigonometric polynomials in $L_p(\T^d)$ spaces were also obtained in~\cite{Jo99}, \cite{KKLT22}, \cite{O86}, \cite{Xu91}, \cite{RS97}, \cite{Kr20}.

\section{MZ inequalities for algebraic polynomials}\label{seca}

By $\mathcal{P}_n$ we denote the set of all algebraic polynomials of degree at most $n$,
$$
\mathcal{P}_n=\bigg\{P\,:\, P(x)=\sum_{k=0}^n a_k x^k,\,\, a_k\in \C\bigg\}.
$$

\subsection{MZ inequalities via Markov's inequality}
The well-known Markov inequality states that for each algebraic polynomial $P_n\in \mathcal{P}_n$ and any interval $(a,b)\subset \R$ there holds
\begin{equation}\label{Mark}
  \|P_n'\|_{L_\infty[a,b]}\le \frac{2n^2}{b-a}\|P_n\|_{L_\infty[a,b]}.
\end{equation}
First, we derive a generalization of this inequality to the case of an arbitrary rearrangement invariant Banach lattice.

\begin{proposition}\label{pMarkX}
  Let $X$ be a rearrangement invariant Banach lattice on $[a,b]\subset \R$ and let $r\in \N$. Then, for each $P_n\in \mathcal{P}_n$,  there holds
\begin{equation*}
  \|P_n^{(r)}\|_{X}\le \(\frac{8n^2}{b-a}\)^r\|P_n\|_{X}.
\end{equation*}
\end{proposition}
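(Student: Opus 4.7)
By iteration, it suffices to establish the case $r=1$: for every $P_n \in \mathcal{P}_n$,
\begin{equation*}
\|P_n'\|_X \le \frac{8n^2}{b-a}\|P_n\|_X.
\end{equation*}
Indeed, since $P_n^{(j)} \in \mathcal{P}_{n-j} \subset \mathcal{P}_n$ and $(n-j)^2 \le n^2$, applying the $r=1$ bound successively to $P_n, P_n', \dots, P_n^{(r-1)}$ produces the advertised constant $(8n^2/(b-a))^r$.

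For the $r=1$ case my plan is to combine the classical Markov inequality \eqref{Mark} applied on \emph{subintervals} with a transfer from $L_\infty$-type pointwise bounds to the $X$-norm, the latter furnished by Lemma~\ref{ilf}. For each $x \in [a,b]$ I select a subinterval $I_x \subset [a,b]$ of length $(b-a)/4$ that contains $x$ (a simple geometric choice always works: shift the interval $[x-(b-a)/8,x+(b-a)/8]$ inside $[a,b]$ if necessary). Applying \eqref{Mark} on $I_x$ gives
\begin{equation*}
|P_n'(x)| \le \|P_n'\|_{L_\infty(I_x)} \le \frac{2n^2}{(b-a)/4}\|P_n\|_{L_\infty(I_x)} = \frac{8n^2}{b-a}\max_{t \in I_x}|P_n(t)|,
\end{equation*}
which accounts for the numerical factor $8$ via the quarter-length localization. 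Monotonicity of $\|\cdot\|_X$ then yields
\begin{equation*}
\|P_n'\|_X \le \frac{8n^2}{b-a}\Bigl\|x \mapsto \max_{t \in I_x}|P_n(t)|\Bigr\|_X.
\end{equation*}

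The final step is to bound the $X$-norm of the localized maximum by $\|P_n\|_X$. This is the transfer result provided by Lemma~\ref{ilf}, which relies on both the rearrangement invariance of $X$ and the rigidity of polynomials of fixed degree: polynomials cannot concentrate too sharply, so a family of intervals of equal length cannot inflate the rearrangement of $|P_n|$.

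\textbf{Main obstacle.} The transfer step at the end is the hardest. For arbitrary measurable functions the localized maximum operator $f \mapsto \max_{t \in I_\cdot}|f(t)|$ is \emph{unbounded} on many rearrangement invariant lattices---already on $L_1$, as shown by $f = \chi_{[a,a+\varepsilon]}$, where $\|\max_{I_\cdot}|f|\|_{L_1} \asymp b-a$ while $\|f\|_{L_1} = \varepsilon$. Hence the conclusion cannot be reached by purely measure-theoretic means, and the polynomial structure encoded in Lemma~\ref{ilf} is essential. Everything else (the reduction to $r=1$, the choice of $I_x$, and the application of \eqref{Mark}) is routine.
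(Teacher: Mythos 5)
Your reduction to $r=1$ and the idea of applying Markov's inequality \eqref{Mark} on subintervals are fine, but the proof breaks down at the final transfer step, and not merely because a detail is missing: the inequality $\bigl\|\max_{t\in I_x}|P_n(t)|\bigr\|_X\le \|P_n\|_X$ that you need is false for windows of fixed length $(b-a)/4$. Take $[a,b]=[0,1]$, $X=L_1$, $P_n(x)=x^n$. Then $\|P_n\|_{L_1}=1/(n+1)$, while with your shifting rule $\max_{t\in I_x}|P_n(t)|=1$ for every $x\ge 7/8$ (since then $1\in I_x$), so $\bigl\|\max_{t\in I_\cdot}|P_n|\bigr\|_{L_1}\ge 1/8$ and the ratio grows like $n$. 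A localized maximum can only be controlled in $\|\cdot\|_X$ when the window length is of the order of the reciprocal of the Markov constant, i.e. $\asymp (b-a)/n^2$; this is exactly the content of Theorem~\ref{thmm2} and Remark~\ref{rem1}, and even then the resulting constant is $1+\eta>1$, never $1$ (note $\max_{t\in I_x}|P_n(t)|\ge |P_n(x)|$, so a constant-$1$ bound would be an equality-forcing statement). Moreover, Lemma~\ref{ilf} does not assert any boundedness of a localized maximal operator: it is a representation of a nonzero linear functional on an $n$-dimensional subspace of $C(Q)$ as a sum of at most $2n-1$ point evaluations with $\|L\|=\sum_i|\lambda_i|$, so invoking it as ``the transfer result'' is a misreading.

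The paper's actual use of Lemma~\ref{ilf} is the idea you are missing. One applies it to the functional $L(s)=s'(c)$, $c=\frac{a+b}{2}$, on $\mathcal{P}_n$ viewed on the half-interval $[a,c]$, obtaining $s'(c)=\sum_{i=1}^{2n-1}\lambda_i s(x_i)$ with $x_i\in[a,c]$ and, by \eqref{Mark} on $[a,c]$, $\sum_i|\lambda_i|\le \frac{4n^2}{b-a}$. Substituting $s(t)=P_n(t-c+x)$ turns this into the pointwise identity $P_n'(x)=\sum_i\lambda_i P_n(x_i-c+x)$; restricting to $x\in[c,b]$, each summand $P_n(x_i-c+\cdot)\chi_{[c,b]}$ is equimeasurable with $P_n\chi_{[x_i,\,x_i-c+b]}$, a restriction of $P_n$ to a subinterval of $[a,b]$, so rearrangement invariance and monotonicity give $\|P_n'\chi_{[c,b]}\|_X\le \frac{4n^2}{b-a}\|P_n\|_X$. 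The symmetric estimate on $[a,c]$ and the triangle inequality produce the factor $8$; the numerology is $8=4+4$ from halving the interval and summing the two pieces, not from quarter-length localization.
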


\begin{proof}
We follow the ideas from~\cite{E00}. It is enough to prove the proposition for $r=1$. Let $c=\frac{a+b}{2}$. Applying Lemma~\ref{ilf} for $\Phi_n=\mathcal{P}_n$, $Q=[a,c]$, and $L(s)=s'(c)$, $s\in \mathcal{P}_n$, we can find the numbers $(\l_i)_{i=1}^{2n-1}$ and the points $(x_i)_{i=1}^{2n-1}\subset [a,c]$ such that
\begin{equation}\label{intP}
  L(s)=s'(c)=\sum_{i=1}^{2n-1}\l_i s(x_i),\quad s\in \mathcal{P}_n.
\end{equation}
Moreover, we deduce from~\eqref{Mark} that
\begin{equation}\label{mark0}
\|L\|=\sum_{i=1}^{2n-1}|\l_i|\le \frac{2n^2}{c-a}=\frac{4n^2}{b-a}.
\end{equation}
We set $s(t)=P_n(t-c+x)\in \mathcal{P}_n$ for any $x\in [a,b]$. Using~\eqref{intP} and taking into account that $s'(c)=P_n'(x)$, we obtain
\begin{equation}\label{mark1}
  \begin{split}
     \bigg\|\frac{b-a}{4n^2}P_n'\chi_{[c,b]}\bigg\|_X\le \frac{b-a}{4n^2} \sum_{i=1}^{2n-1}|\l_i \|P_n(x_i-c+\cdot)\chi_{[c,b]}\|_X.
  \end{split}
\end{equation}
It is not difficult to see that the functions $f(x)=P_n(x_i-c+x)\chi_{[c,b]}(x)$ and $g(x)=P_n(x)\chi_{[x_i, x_i-c+b]}(x)$
have equal distribution functions, i.e., $\mu_f=\mu_g$. Therefore,
\begin{equation}\label{mark2}
  \|P_n(x_i-c+\cdot)\chi_{[c,b]}\|_X=\|P_n\chi_{[x_i, x_i-c+b]}\|_X\le \|P_n\|_X.
\end{equation}
Thus, combining~\eqref{mark0}, \eqref{mark1} and~\eqref{mark2}, we obtain
\begin{equation}\label{mark3}
  \|P_n'\chi_{[c,b]}\|_X\le \frac{4n^2}{b-a}\|P_n\|_X.
\end{equation}
Similarly, applying Lemma~\ref{ilf} with $Q=[c,b]$, we have
\begin{equation}\label{mark4}
  \|P_n'\chi_{[a,c]}\|_X\le \frac{4n^2}{b-a}\|P_n\|_X.
\end{equation}
Finally, inequalities~\eqref{mark3} and~\eqref{mark4} yield
\begin{equation*}
  \|P_n'\|_X \le \|P_n'\chi_{[a,c]}\|_X+\|P_n'\chi_{[c,b]}\|_X\le \frac{8n^2}{b-a}\|P_n\|_X.
\end{equation*}
\end{proof}

Now with the help of Proposition~\ref{pMarkX}, we obtain the following result.
\begin{theorem}\label{mzMark}
  Let $X$ be a rearrangement invariant Banach lattice on $[a,b]$ and let $\e>0$. Then, for each $P_n\in \mathcal{P}_n$ and $N\ge (\frac8{\log 2}+\e)n^2$,  we have
\begin{equation*}\label{E1}
  \|P_n\|_X\asymp \bigg\|\sum_{k=1}^{N}|P_n(x_k)|\chi_{\Omega_k}\bigg\|_{X},
\end{equation*}
where $x_k$ is an arbitrary point in $\Omega_k=[a+\frac{(b-a)(k-1)}{N}, a+\frac{(b-a)k}{N}]$, $k=1,\dots,N$.
\end{theorem}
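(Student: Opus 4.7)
The plan is to combine the Markov-type inequality in rearrangement invariant lattices (Proposition~\ref{pMarkX}) with the general MZ framework of Theorem~\ref{corthmmwe}. Crucially, the latter theorem is stated for arbitrary quasi-Banach lattices and does \emph{not} require translation invariance, so it applies on $\Omega=[a,b]$ once a suitable derivative estimate is available.

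First, I would apply Proposition~\ref{pMarkX} at each order $r\in\N$ to obtain
$$
\|P_n^{(r)}\|_X\le B_r\, n^{2r}\,\|P_n\|_X,\qquad B_r:=\(\tfrac{8}{b-a}\)^{r},\qquad P_n\in\mathcal{P}_n.
$$
This is precisely hypothesis~\eqref{berwe} of Theorems~\ref{thmmwe}--\ref{corthmmwe} in the scalar case $d=1$, $\vp_n\equiv 1$, $\beta=2$, with the constants $B_\a=B_{|\a|_1}$.

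Second, I would set $\Omega=[a,b]$, take $\Omega_k=\Omega_k^*$ as in the statement, and notice that $\sum_{k=1}^N\chi_{\Omega_k}\equiv 1$ on $\Omega$, so the covering constant is $c_d=1$. With $q=1$, the parameter $\eta$ in~\eqref{asumpeta} becomes
$$
\eta=\sum_{r=1}^{\infty}\frac{A^rB_r}{r!}=\exp\!\Big(\tfrac{8A}{b-a}\Big)-1,
$$
which is $<1$ iff $A<\tfrac{(b-a)\log 2}{8}$.

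Third, I would check the geometric condition~\eqref{zvv}: since $\vp_n\equiv1$ and $\beta=2$, it reduces to $\mathrm{diam}\,\Omega_k=\tfrac{b-a}{N}\le \tfrac{A}{n^2}$, equivalently $N\ge\tfrac{(b-a)n^2}{A}$. Given the hypothesis $N\ge\bigl(\tfrac{8}{\log 2}+\varepsilon\bigr)n^2$, choose $A=\tfrac{b-a}{8/\log 2+\varepsilon}$; then $A<\tfrac{(b-a)\log 2}{8}$, so $\eta<1$, and simultaneously $\tfrac{b-a}{N}\le\tfrac{A}{n^2}$. Applying Theorem~\ref{corthmmwe}(A) then yields both inequalities in the claimed equivalence.

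There is no genuine obstacle here: the real work has already been done in Proposition~\ref{pMarkX}, where rearrangement invariance is used to lift the classical $L_\infty$ Markov inequality to $X$. Once that is in hand, the present theorem follows by a direct application of the abstract machinery; the only thing worth tracking is the bookkeeping of constants that converts $A<\tfrac{(b-a)\log 2}{8}$ into the threshold $\tfrac{8}{\log 2}n^2$ for $N$.
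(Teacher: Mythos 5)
Your proof is correct and follows essentially the same route as the paper: the paper also applies Theorem~\ref{corthmmwe} with $\mathcal{F}_n=\mathcal{P}_n$, $\vp_n\equiv 1$, $\Omega_k^*=\Omega_k$, using Proposition~\ref{pMarkX} to verify \eqref{asumpeta} for $0<A<\tfrac{(b-a)\log 2}{8}$ and \eqref{zvv} for $N\gtrsim n^2/A$. Your bookkeeping of the factor $b-a$ (and its cancellation in the final threshold for $N$) is in fact slightly more careful than the paper's one-line verification.
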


\begin{proof}
  We apply Theorem~\ref{corthmmwe} with $\mathcal{F}_n=\mathcal{P}_n$, $\vp_n\equiv 1$, and $\Omega_k^*=\Omega_k$, $k=1,\dots,N$. Note that by Proposition~\ref{pMarkX}, condition~\eqref{asumpeta} holds if $0<A<\frac{b-a}8\log 2$ and~\eqref{zvv} is fulfilled if $N\ge\frac{n^2}{A}$.
\end{proof}

This theorem extends the main results in~\cite{Sh97} to the case of an arbitrary rearrangement invariant Banach lattice $X$ and also improves the corresponding results to the case of non-regular grids.

\subsection{MZ inequalities via Bernstein's inequality}

\begin{proposition}\label{E0}
Let $X=X([-1,1],\mu;q)$, $0<q\le 1$, be a quasi-Banach lattice, $\vp_n(x)=\sqrt{1-x^2}+\frac1n$, $A>0$, $\d=\tfrac An$ and the points
$$
-1=x_0<x_1<\dots<x_N=1
$$
be such that
\begin{equation}\label{zv}
x_{k+1}-x_k\le \d(\vp_n(x_k)+\vp_n(x_{k+1})),\quad k=0,\dots,N-1,
\end{equation}
and
$$
-\frac{1-\d^2}{{1+\d^2}}\le x_1,\quad x_{N-1}\le \frac{1-\d^2}{{1+\d^2}}.
$$
Assume that for each $r\in \N$ there exists $B_r>0$  such that
$$
\|\vp_n^r P_n^{(r)}\|_X\le B_rn^r \|P_n\|_X,\quad P_n\in \mathcal{P}_n,
$$
and
$$
\sum_{r=1}^\infty\(\frac{A^{r}B_r}{r!}\)^{q}<1.
$$
Then, for any  $P_n\in \mathcal{P}_n$,
  \begin{equation*}
  \begin{split}
     \|P_n\|_X\asymp \bigg\|\sum_{k=0}^{N-1} |P_n(\xi_k)|\chi_{[x_k,x_{k+1}]}\bigg\|_X,
  \end{split}
  \end{equation*}
  where $\xi_k\in [x_{k+1}-\d\vp_n(x_{k+1}),x_k+\d\vp_n(x_k)]$ for $k=0,\dots,N-1$.
\end{proposition}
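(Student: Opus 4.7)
The plan is to derive the proposition as a direct application of Theorem~\ref{corthmmwe}(A) with $d=1$, $\mathcal{F}_n=\mathcal{P}_n$, $\beta=1$, the weight $\vp_n(x)=\sqrt{1-x^2}+\tfrac1n$, the partition $\Omega_k=[x_k,x_{k+1}]$ of $[-1,1]$ for $k=0,\dots,N-1$, and the singletons $\Omega_k^*=\{\xi_k\}$. Since the $\Omega_k$'s partition $[-1,1]$, condition~\eqref{mzwe0} is satisfied with $c_d=1$, and the weighted Bernstein hypothesis of the proposition is precisely the one-dimensional form of~\eqref{berwe}. Because $\Omega_k^*$ is a singleton, both $\max$ and $\min$ in~\eqref{mzwe} reduce to $|P_n(\xi_k)|$, so the theorem delivers precisely the desired two-sided equivalence once~\eqref{zvv} is checked.

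For the covering, fix $k$ and $x\in[x_k,x_{k+1}]$. The spacing assumption~\eqref{zv} ensures that the interval $[x_{k+1}-\d\vp_n(x_{k+1}),x_k+\d\vp_n(x_k)]$ containing $\xi_k$ is non-empty; a short case analysis shows that $\xi_k$ and $x$ both lie in a common interval of length at most $\d(\vp_n(x_k)+\vp_n(x_{k+1}))$, so
\[
|\xi_k-x|\le\frac{A}{n}\bigl(\vp_n(x_k)+\vp_n(x_{k+1})\bigr).
\]
The main step is then a doubling estimate: $\vp_n(x_k)+\vp_n(x_{k+1})\le C\vp_n(x)$ for $x\in[x_k,x_{k+1}]$, with a constant $C$ depending only on $A$. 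The concavity of $\sqrt{1-x^2}$ on $[-1,1]$ gives $\vp_n(x)\ge\min(\vp_n(x_k),\vp_n(x_{k+1}))$, so it remains to bound the ratio $\vp_n(x_k)/\vp_n(x_{k+1})$ uniformly. From the identity
\[
\vp_n(x_k)-\vp_n(x_{k+1})=\frac{(x_{k+1}-x_k)(x_k+x_{k+1})}{\sqrt{1-x_k^2}+\sqrt{1-x_{k+1}^2}}
\]
together with~\eqref{zv}, one extracts $|\vp_n(x_k)-\vp_n(x_{k+1})|\lesssim A\min(\vp_n(x_k),\vp_n(x_{k+1}))$. The endpoint conditions $x_1\ge-\tfrac{1-\d^2}{1+\d^2}$ and $x_{N-1}\le\tfrac{1-\d^2}{1+\d^2}$ are precisely what is needed for this estimate to survive across the boundary intervals $[-1,x_1]$ and $[x_{N-1},1]$, where one of $\vp_n(x_k),\vp_n(x_{k+1})$ reduces to $1/n$. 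Establishing this uniform doubling is the step I expect to be the main obstacle, particularly because it must remain uniform over the grid as the nodes approach $\pm 1$.

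Combining the two bounds yields $|\xi_k-x|\le\frac{A'}{n}\vp_n(x)$ with $A'=CA$, which is exactly~\eqref{zvv} for the parameter $A'$. The summability hypothesis of the proposition, together with the fact that $C$ depends only on $A$, ensures $\sum_{r\ge1}((A')^rB_r/r!)^q<1$ (possibly after absorbing the factor $C$ into an initial restriction on $A$); applying Theorem~\ref{corthmmwe}(A) with these choices then produces the equivalence
\[
\|P_n\|_X\asymp\bigg\|\sum_{k=0}^{N-1}|P_n(\xi_k)|\chi_{[x_k,x_{k+1}]}\bigg\|_X,
\]
with implicit constants depending on $A$ and on the $B_r$. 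The upper bound on the sampled sum is also available from Theorem~\ref{corthmmwe}(B), which does not require the summability $\eta'<1$.
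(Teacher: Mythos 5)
Your high-level strategy is the same as the paper's (apply Theorem~\ref{corthmmwe} with $\Omega_k=[x_k,x_{k+1}]$ and reduce everything to the covering condition~\eqref{zvv}), but the way you verify~\eqref{zvv} loses a multiplicative constant that the proposition cannot afford. You bound $|\xi_k-x|\le \d(\vp_n(x_k)+\vp_n(x_{k+1}))$ and then invoke a doubling estimate $\vp_n(x_k)+\vp_n(x_{k+1})\le C\vp_n(x)$; since $C\ge 2$ necessarily (take $x=x_k$), you end up with the covering radius $\tfrac{CA}{n}\vp_n(x)$ and must apply the theorem with $A'=CA$. The smallness condition then required is $\sum_{r\ge1}((CA)^rB_r/r!)^q<1$, which is strictly stronger than, and not implied by, the stated hypothesis $\sum_{r\ge1}(A^rB_r/r!)^q<1$ (e.g.\ for $q=1$, $B_r=B^r$ the hypothesis is $AB<\log 2$ while you would need $CAB<\log 2$). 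Your parenthetical ``after absorbing the factor $C$ into an initial restriction on $A$'' is precisely the concession that you prove a weaker statement than the one asserted: the quantitative match between the $A$ in the mesh condition~\eqref{zv} and the $A$ in the summability condition is the content of the proposition. In addition, the doubling estimate itself is only sketched (your difference-quotient identity has denominator $\sqrt{1-x_k^2}+\sqrt{1-x_{k+1}^2}$, which degenerates in the boundary cells and when both nodes are within $O(1/n^2)$ of $\pm1$, so a case analysis is unavoidable), and you misattribute what saves the boundary cells: it is the spacing condition~\eqref{zv} at $k=0,N-1$ that forces $\sqrt{1-x_{N-1}^2}\lesssim_A 1/n$ and hence controls $\vp_n(x_{N-1})/\vp_n(1)$, not the conditions $x_1\ge-\tfrac{1-\d^2}{1+\d^2}$, $x_{N-1}\le\tfrac{1-\d^2}{1+\d^2}$.

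The paper avoids all of this by verifying~\eqref{zvv} \emph{exactly}, with radius $\d\vp_n(x)$ and the original $A$. Taking $\Omega_k^*=[x_{k+1}-\d\vp_n(x_{k+1}),\,x_k+\d\vp_n(x_k)]$, the required inclusion is the chain
\begin{equation*}
x-\d\vp_n(x)\ \le\ x_{k+1}-\d\vp_n(x_{k+1})\ \le\ x_k+\d\vp_n(x_k)\ \le\ x+\d\vp_n(x),
\qquad x\in(x_k,x_{k+1}).
\end{equation*}
The middle inequality is literally~\eqref{zv}; the outer two say $\xi(x)\le\xi(x_{k+1})$ and $\eta(x_k)\le\eta(x)$ for $\xi(t)=t-\d\vp_n(t)$ and $\eta(t)=t+\d\vp_n(t)$, and these follow from the monotonicity of $\xi$ on $(-\tfrac1{\sqrt{1+\d^2}},1)$ and of $\eta$ on $(-1,\tfrac1{\sqrt{1+\d^2}})$. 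The endpoint conditions on $x_1$ and $x_{N-1}$ enter exactly where the intervals $\Omega_0$ and $\Omega_{N-1}$ overrun these turning points (for instance, $x_{N-1}\le\tfrac{1-\d^2}{1+\d^2}$ is equivalent to $\eta(x_{N-1})\le\eta(x)$ for $x$ beyond $\tfrac1{\sqrt{1+\d^2}}$). If you replace your doubling step by this monotonicity argument, your proof closes with the stated constants; as written, it does not establish the proposition as stated.
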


\begin{proof}
We apply Theorem~\ref{corthmmwe} with
$$\Omega_k=[x_{k},x_{k+1}]\quad\text{and}\quad\Omega_k^*=[x_{k+1}-\d\vp_n(x_{k+1}),x_k+\d\vp_n(x_k)].$$
It is enough to verify~\eqref{zvv}, that is, 
\begin{equation}\label{ineqineq}
  x-\d\vp_n(x)\le x_{k+1}-\d\vp_n(x_{k+1})\le x_k+\d\vp_n(x_k)\le x+\d\vp_n(x)
\end{equation}
for any $x\in (x_k,x_{k+1})$, $k=0,\dots,N-1$. First, we note that the function $\xi(x)=x-\d\vp_n(x)$ is increasing on $(-\frac{1}{\sqrt{1+\d^2}},1)$  and decreasing on $(-1,-\frac{1}{\sqrt{1+\d^2}})$, while the function $\eta(x)=x+\d\vp_n(x)$ is increasing on $(-1,\frac{1}{\sqrt{1+\d^2}})$ and decreasing on $(\frac{1}{\sqrt{1+\d^2}},1)$. Using these properties and~\eqref{zv}, it is not difficult to see that~\eqref{ineqineq} indeed holds for each $k=1,\dots,N-2$. Now we verify~\eqref{ineqineq} for $k=0$ and $k=N-1$. Consider the case $k=N-1$. By the monotonicity of $\xi$ on $(x_{N-1},1)$, we have that $x-\d\vp_n(x)\le x_N-\d\vp_n(x_N)$. Now we claim that
\begin{equation}\label{xxxyyy}
  x_{N-1}+\d\vp_n(x_{N-1})\le x+\d\vp_n(x)
\end{equation}
for $x\in (x_{N-1},1)$. Indeed, if $x\in (x_{N-1},\tfrac1{\sqrt{1+\d^2}})$, then  $\eta$ is increasing and~\eqref{xxxyyy} is valid. If $x\in (\tfrac1{\sqrt{1+\d^2}},1)$, then~\eqref{xxxyyy} holds if and only if $x_{N-1}\le \frac{1-\d^2}{1+\d^2}\le \frac1{\sqrt{1+\d^2}}$. Thus,~\eqref{ineqineq} holds for all $x\in \Omega_{N-1}$. Similarly, one can establish~\eqref{ineqineq} for $x\in \Omega_0$.  Summarizing, we have
$$
\Omega_k^*\subset (x-\d\vp_n(x),x+\d\vp_n(x))\quad\text{for any}\quad x\in \Omega_k,\quad k=0,\dots,N-1,
$$
completing the proof.
\end{proof}

\begin{example}
    Consider  $X=L_{p,w}[-1,1]$, $0<p<\infty$, with a positive weight $w$
such that there holds
\begin{equation*}
  \|\(\sqrt{1-x^2}+\tfrac1n\)^r P_n^{(r)}\|_{L_{p,w}[-1,1]}\le C n^r\|P_n\|_{L_{p,w}[-1,1]},\quad P_n\in \mathcal{P}_n,\quad r\in\N.
\end{equation*}
If $x_j=\cos t_j$, $t_j=\frac{(N-j)\pi}{N}$, $j=0,\dots,N$, then according to Proposition~\ref{E0}, we have
\begin{equation}\label{E2}
  \int_{-1}^1|P_n(x)|^pw(x)dx\asymp\sum_{j=0}^N|P_n(x_j)|^p a_j,
\end{equation}
  where  $N\ge\l n$ for some $\l>1$ and
  $$
  a_j=\int_{t_j-\frac1N}^{t_j+\frac1N}w(\cos t)|\sin t|dt,\quad j=0,\dots,N.
  $$
In the unweighted case, that is, $w\equiv 1$, we have that
$$
a_j=\int_{t_j-\frac1N}^{t_j+\frac1N}|\sin t|dt\sim\frac1{N^2}+\frac{\sin\frac{j\pi}{N}}{N},\quad j=0,\dots,N,
$$
see, e.g.,~\cite{MT00} and~\cite{DMK18}.

The result in~\eqref{E2} was known~$($\cite{MT00} and~\cite{E99}$)$ for doubling weights $w$. Using~\cite{BT17}, we have the same results for some non-doubling weights.
\end{example}

\subsection{Minimal set of nodes}

In the next result,
$\mu$ is defined to be a non-decreasing bounded function on $[a,b]$, $-\infty\le a<b\le+\infty$, which takes infinitely many distinct values and
satisfies $\int_a^b x^k d\mu(x)<\infty$ for each $k\in \Z_+$. Let also $(\psi_n)_{n=0}^\infty$ be a set of orthonormal polynomials with respect to $\mu$ and let $S_n$ be the $n$-th Fourier partial  sum operator with respect to this system.

\begin{theorem}\label{algw}
Let $X=X([a,b],\mu)$ be a Banach function lattice and let $(x_k)_{k=1}^n$ be zeros of the orthonormal  polynomial $\psi_n$ with respect to $\mu$,
$$a=x_0<x_1<\cdots<x_n<x_{n+1}=b.$$
  Assume that the following conditions hold:

  \begin{enumerate}
\item[$1)$] there exists $C>0$ such that
    $$
    \|S_n f\|_{X}\le C\|f\|_{X},\quad f\in X;
    $$
    \item[$2)$] 
  there exists a nonnegative function $\vp_n$ on $[a,b]$  such that for each $r\in\Z_+$
  \begin{equation*}
    \|\vp_n^{r}P_n^{(r)}\|_Y\le B_r n^{r}\| P_n\|_Y,\quad P_n\in \mathcal{P}_n,\quad Y\in \{X,X'\},
  \end{equation*}
with some $B_r>0$.

   \item[$3)$] there exists a constant $A>0$ such that $\sum_{r=1}^\infty \frac{B_r A^r}{r!}<\infty$ and
$$
  x_{k+1}-x_k\le \frac{A}{n}\min\{\vp_n(x_k),\vp_n(x_{k+1})\},\quad k=1,\dots,n-1.
  $$
    \end{enumerate}
Then, for all $P_{n}\in \mathcal{P}_n$, we have
    \begin{equation*}
      \|P_n\|_{X}\asymp\bigg\|\sum_{k=1}^n |P_n(x_k)|\chi_{[x_{k-1},x_{k+1}]}\bigg\|_{X}.
     \end{equation*}
\end{theorem}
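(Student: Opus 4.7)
The plan is to derive Theorem~\ref{algw} as a direct application of Theorem~\ref{thOr2} with $\Psi_n = \mathcal{P}_n$, the nodes $(x_k)_{k=1}^n$ equal to the zeros of $\psi_n$, and the quadrature weights $\mu_k = \lambda_k$ given by the Christoffel numbers for $\mu$. The sets $\Omega_k$ in Theorem~\ref{thOr2} will be taken as $\Omega_k = [x_{k-1}, x_{k+1}]$, so that these intervals cover $[a,b]$ with overlap multiplicity at most two, yielding $c_2 = 2$.

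I would then verify the five hypotheses of Theorem~\ref{thOr2} in turn. Condition 1) is the classical Gauss--Christoffel quadrature exactness at the zeros of $\psi_n$. Condition 2) requires $\lambda_k \lesssim \mu(\Omega_k)$, a standard Christoffel-function estimate that holds in all the typical settings. Conditions 3) and 4) (with $\beta = 1$) are precisely the assumptions 1) and 2) of Theorem~\ref{algw}. For condition 5) I would combine the spacing hypothesis
\begin{equation*}
  x_{k+1} - x_k \le \frac{A}{n}\min\{\vp_n(x_k), \vp_n(x_{k+1})\}
\end{equation*}
with a slow-variation property of $\vp_n$ on consecutive intervals in order to obtain, for each $x \in \Omega_k$,
\begin{equation*}
  |x - x_k| \le \max\{x_{k+1} - x_k,\; x_k - x_{k-1}\} \lesssim \frac{A}{n}\vp_n(x),
\end{equation*}
so that $x_k \in Q(x, A'\vp_n(x)/n)$ with a suitable constant $A'$. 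The summability condition $\sum_r B_r A^r / r! < \infty$ in assumption 3) of Theorem~\ref{algw} then translates directly into finiteness of the corresponding $\eta$ in condition 5) of Theorem~\ref{thOr2}.

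I expect the main obstacle to lie in the two quantitative inputs. First, the Christoffel-number bound $\lambda_k \lesssim \mu(\Omega_k)$, which is what allows the discrete quadrature masses to be replaced by $\mu$-lengths of the $\Omega_k$. Second, the slow-variation of $\vp_n$ over the intervals $\Omega_k$, needed to pass from $\vp_n(x_{k\pm 1})$ to $\vp_n(x)$ in step 5; this is what makes the spacing condition truly geometric rather than tied to the grid. Once both are in hand, Theorem~\ref{thOr2} immediately delivers the equivalence
\begin{equation*}
  \|P_n\|_X \asymp \bigg\|\sum_{k=1}^n |P_n(x_k)|\chi_{[x_{k-1}, x_{k+1}]}\bigg\|_X
\end{equation*}
as stated.
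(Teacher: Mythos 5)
Your proposal is correct and follows essentially the same route as the paper: both reduce Theorem~\ref{algw} to Theorem~\ref{thOr2} with $\Omega_k=(x_{k-1},x_{k+1})$, Gauss quadrature at the zeros of $\psi_n$ for condition 1), the Christoffel-number bound $\mu_k\le\mu(\Omega_k)$ for condition 2) (which the paper pins down as the Chebyshev--Markov--Stieltjes separation theorem, giving $c_1=1$, $c_2=2$), and a direct translation of the remaining hypotheses into conditions 3)--5). The only difference is that you explicitly flag the need to pass from $\vp_n(x_{k\pm1})$ to $\vp_n(x)$ for $x\in\Omega_k$, a point the paper absorbs into the phrase ``after the corresponding reformulations.''
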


\begin{proof}
First we note that conditions 1), 2) and 3) coincide, after the corresponding reformulations, with conditions 3), 4) and 5) of Theorem~\ref{thOr2}. Thus, to prove Theorem~\ref{algw}, it is enough to verify conditions 1)  and 2) of Theorem~\ref{thOr2}.
By the well-known Gauss quadrature formula (see, e.g.,~\cite[p.~47]{Sego}), condition 1) of Theorem~\ref{thOr2} holds
for $N=n$, herewith $(\mu_k)_{k=1}^N$ are the Cristoffel weights. Next, property 2) in Theorem~\ref{thOr2} holds with the constants $c_1=1$ and $c_2=2$ if $\Omega_k=(x_{k-1},x_{k+1})$, $k=1,\dots,n$, see~\cite[p.~50]{Sego}. This is the well-known Chebyshev-Markov-Stieltjes separation theorem.
\end{proof}

Let us illustrate this theorem in the case of Jacobi weights.

\begin{corollary}\label{jacobi}
  Let $w_{\a,\b}(x)=(1-x)^\a(1+x)^\b$, $\a,\b>-1$ and let
$x_1,x_2,\dots,x_{n}$ be the zeros of $n$-th orthogonal Jacobi polynomial corresponding to the weight $w_{\a,\b}$. If $1<p<\infty$,  $|(\a+1)(1/2-1/p)|<\min(1/4,(\a+1)/2)$ and $|(\b+1)(1/2-1/p)|<\min(1/4,(\b+1)/2)$, then, for all $P_n\in \mathcal{P}_n$,
\begin{equation}\label{E2J}
  \int_{-1}^1|P_n(x)|^pw_{\a,\b}(x)dx\asymp\sum_{j=1}^n|P_n(x_j)|^p \mu_j,
\end{equation}
where
$$
\mu_j=\int_{x_{j-1}}^{x_{j+1}}w_{\a,\b}(x)dx,\quad x_0=-1,\,\,x_{n+1}=1,\quad j=1,\dots,n.
$$
\end{corollary}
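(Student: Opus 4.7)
The plan is to apply Theorem~\ref{algw} to $X=L_p([-1,1],d\mu)$ with $d\mu(x)=w_{\a,\b}(x)dx$, so that $X'=L_{p'}([-1,1],d\mu)$. The orthonormal system $(\psi_k)$ of Theorem~\ref{algw} is then the orthonormal Jacobi system, $(x_k)_{k=1}^n$ are the Jacobi zeros, and the weights
\[
\mu_k=\int_{x_{k-1}}^{x_{k+1}}w_{\a,\b}(x)dx=\mu([x_{k-1},x_{k+1}])
\]
appearing in~\eqref{E2J} are exactly the measures of the intervals produced by the Chebyshev--Markov--Stieltjes separation theorem used inside the proof of Theorem~\ref{algw}. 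Hence, once conditions~1)--3) of that theorem are verified for this choice of $X$, the conclusion of Theorem~\ref{algw} gives precisely~\eqref{E2J}.

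Condition~1) is the uniform boundedness $\|S_n f\|_{L_p(w_{\a,\b})}\lesssim\|f\|_{L_p(w_{\a,\b})}$ of the Jacobi--Fourier partial sums; this is the classical Pollard--Muckenhoupt theorem, whose sharp $L_p$-range is exactly the condition $|(\a+1)(1/2-1/p)|<\min(1/4,(\a+1)/2)$ together with its twin in $\b$ assumed in the corollary. The same bound on $X'=L_{p'}(w_{\a,\b})$ is automatic because the Pollard condition is symmetric under $p\leftrightarrow p'$: $1/2-1/p'=-(1/2-1/p)$. For condition~2) I would take the Ditzian--Totik weight $\vp_n(x)=\sqrt{1-x^2}+1/n$ and invoke the iterated Bernstein--Markov inequality
\[
\|\vp_n^rP_n^{(r)}\|_{L_p(w_{\a,\b})}\le (Bn)^r\|P_n\|_{L_p(w_{\a,\b})},\quad P_n\in\mathcal{P}_n,
\]
which is standard for Jacobi weights in the same $p$-range and, by the same symmetry, holds on $X'$ as well. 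Taking $B_r=B^r$, the series $\sum_{r\ge 1}B_rA^r/r!=e^{AB}-1$ is finite for every $A>0$.

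Finally, condition~3) follows from the sharp trigonometric asymptotics of Jacobi zeros $x_k=\cos\theta_k$, $\theta_k\in(0,\pi)$: one has $\theta_{k+1}-\theta_k\asymp 1/n$ uniformly in $k$, with $\theta_k\asymp k/n$ near the left endpoint (and symmetrically near the right), whence
\[
x_{k+1}-x_k\asymp\frac{\sin\theta_k}{n}+\frac{1}{n^2}\asymp\frac{\vp_n(x_k)}{n}\asymp\frac{\vp_n(x_{k+1})}{n},
\]
yielding $x_{k+1}-x_k\le\frac An\min\{\vp_n(x_k),\vp_n(x_{k+1})\}$ for an absolute $A=A(\a,\b)$. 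With all three conditions in place, Theorem~\ref{algw} delivers~\eqref{E2J}. The one step that needs genuine care is matching the Pollard--Muckenhoupt $L_p$-range and the weighted Bernstein inequality on both $X$ and $X'$ to the exact formulation used in Theorem~\ref{algw}; the spacing estimate and the identification of the quadrature weights are then essentially routine.
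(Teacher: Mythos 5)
Your proposal is correct and follows essentially the same route as the paper: apply Theorem~\ref{algw} with $X=L_p(w_{\a,\b})$ and $\vp_n(x)=\sqrt{1-x^2}+\tfrac1n$, verifying condition~1) via Muckenhoupt's mean-convergence theorem for Jacobi series (whose range is exactly the stated Pollard-type restriction on $p$, $\a$, $\b$), condition~2) via the Mastroianni--Totik weighted Bernstein inequality for doubling weights, and condition~3) via the standard spacing estimate $x_{k+1}-x_k\asymp\vp_n(x_k)/n$ for Jacobi zeros. The only extra content you supply beyond the paper's proof is the (correct) observation that the $p\leftrightarrow p'$ symmetry handles the associate space $X'$ and the identification of the weights $\mu_j$ with the measures of the overlapping intervals $[x_{j-1},x_{j+1}]$, both of which the paper leaves implicit.
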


\begin{proof}
We apply Theorem~\ref{algw} in the case of the Jacobi weight $w_{\a,\b}$ on $[a,b]=[-1,1]$, $\vp_n(x)=\sqrt{1-x^2}+\frac1n$,
and $\|f\|_X=\(\int_{-1}^1|f(x)|^pw_{\a,\b}(x)dx\)^{1/p}$. Then, the fulfillment of condition 1) follows from~\cite{Muc69}. For the corresponding Bernstein inequality in condition 2), see~\cite{MT00}. Finally, we have (see~\cite[Lemma~2.3]{Shi07}) that for all $\a,\b>-1$,
    $$
    |x_{k+1}-x_k|\asymp \frac{\vp_n(x)}n,\quad x_k\le x\le x_{k+1},
    $$
which implies condition~3) of Theorem~\ref{algw}.
\end{proof}

See also~\cite{L17} for necessary and sufficient conditions on the parameters $\a$ and $\b$ for \eqref{E2J} to hold.

\section{Examples of MZ type inequalities for other function classes}

\subsection{Band-limited functions}\label{efet} 
In what follows, the class of band-limited functions  $\mathcal{B}_X^\s$, $\s>0$, in the space $X$, is given by
%
$$
\mathcal{B}_X^\s=\left\{\varphi  \in X\cap L_1(\R^d)\,:\,\supp\;\widehat{\varphi} \subset Q(0, \s)\right\},
$$
where
$$
\widehat{g}(x) = \int_{\R^d} g(y) e^{-i(x,y)} dy.
$$

Similarly as in the periodic case, the following proprosition is a direct corollary from Theorem~\ref{thmmwe}.

\begin{proposition}\label{corn1E}

Let $X=X(\R^d,\mu)$ be a Banach function lattice such that
\begin{equation}\label{BerBLg}
  \left\|\frac{\partial}{\partial x_i}g_\s\right\|_X\le B\s\|g_\s\|_X,\quad i=1,\dots,d,\quad g_\s\in \mathcal{B}_p^\s,
\end{equation}
for some constant $B>0$. Then, for any $0<A<\frac{\log 2}{dB}$ and any $g_\s\in \mathcal{B}_p^\s$, we have
  \begin{equation*}
    \frac{1}{e^{dAB}}\Big\|\max_{t\in Q(x,A/n)}|g_\s(t)|\Big\|_X\le \|g_\s\|_X\le \frac{1}{2-e^{dAB}}\Big\|\min_{t\in Q(x,A/n)}|g_\s(t)|\Big\|_X.
  \end{equation*}
%
\end{proposition}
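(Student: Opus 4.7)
The plan is to deduce Proposition~\ref{corn1E} as a direct corollary of Theorem~\ref{thmmwe} applied with $\mathcal{F}_n := \mathcal{B}_X^\s$, $\vp_n \equiv 1$, $\b = 1$, $q = 1$, and with $\s$ playing the role of $n$. The only nontrivial step is to upgrade the assumed first-order Bernstein inequality~\eqref{BerBLg} to all orders $\a \in \Z_+^d$.

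To that end, I use that $\widehat{\partial_i g_\s}(\xi) = i\xi_i\widehat{g_\s}(\xi)$, so partial differentiation preserves the Fourier support; hence $\partial_i g_\s \in \mathcal{B}_X^\s$ whenever $g_\s \in \mathcal{B}_X^\s$, and iterating~\eqref{BerBLg} coordinate by coordinate yields
\[
\|D^\a g_\s\|_X \le (B\s)^{|\a|_1}\|g_\s\|_X, \qquad \a\in\Z_+^d.
\]
This is precisely the Bernstein hypothesis~\eqref{berwe} of Theorem~\ref{thmmwe} with $B_\a = B^{|\a|_1}$.

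With these constants and $q = 1$, the quantity $\eta$ from~\eqref{asumpeta} factors into a product of exponential series:
\[
\eta = \sum_{\a \in \Z_+^d \setminus \{\bar 0\}} \frac{(AB)^{|\a|_1}}{\a!} = \(\sum_{k=0}^\infty \frac{(AB)^k}{k!}\)^{\!d} - 1 = e^{dAB} - 1.
\]
The admissibility condition $\eta < 1$ therefore becomes $A < \tfrac{\log 2}{dB}$, which is exactly the hypothesis of the proposition. Substituting $1 + \eta = e^{dAB}$ and $1 - \eta = 2 - e^{dAB}$ into~\eqref{eqwe1}, and observing that the cube $Q(x, A\vp_n(x)/n^\b)$ reduces to $Q(x, A/\s)$, delivers the claimed two-sided estimate. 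No serious obstacle is expected: the entire argument is a specialization of Theorem~\ref{thmmwe} entirely analogous to the periodic case in Proposition~\ref{corn1}, with the Fourier-support observation being the sole ingredient that must be verified beyond a direct quotation.
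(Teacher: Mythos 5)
Your proposal is correct and follows exactly the paper's route: the paper simply declares the proposition ``a direct corollary from Theorem~\ref{thmmwe}'', and your specialization ($\vp_n\equiv 1$, $\b=1$, $q=1$, $B_\a=B^{|\a|_1}$ obtained by iterating~\eqref{BerBLg} thanks to Fourier-support preservation, giving $\eta=e^{dAB}-1$ and the condition $A<\tfrac{\log 2}{dB}$) correctly supplies the details the paper leaves implicit.
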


Note that in the case of the translation invariant lattice $X$, inequality~\eqref{BerBLg} holds with $B=1$. This can be obtained using the standard machinery involving the M. Riesz interpolation formula for band-limited functions; see, e.g., ~\cite[Ch.~XVI, \S~7]{Z}.


Proposition~\ref{corn1E} and Theorem~\ref{corthmmwe} with $ N=\infty$
yield  the following Plancherel--P\'olya-type inequality in an arbitrary translation invariant Banach lattice.

\begin{theorem}\label{PP}
  Let $X=X(\R^d,\mu)$ be a  Banach lattice and let $\e>0$. Assume that~\eqref{BerBLg} holds with the constant $B>0$.
  Then, for all $g_\s\in \mathcal{B}_X^\s$ and $W\ge (\frac{dB}{\log 2}+\e)\s$,
\begin{equation}\label{PPe}
  \|g_\s\|_X\asymp \bigg\|\sum_{k\in \Z^d} |g_\s(x_k)|\chi_{\Omega_k}\bigg\|_{X},
\end{equation}
  where $x_k$ is an arbitrary point in $\Omega_k=[\frac{k_1-1}{W},\frac{k_1}{W}]\times\dots\times [\frac{k_d-1}{W},\frac{k_d}{W}]$, $k\in \Z^d$.
\end{theorem}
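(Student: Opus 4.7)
The strategy is to apply Theorem~\ref{corthmmwe} (the general MZ inequality in quasi-Banach lattices) directly, taking $\mathcal{F}_n = \mathcal{B}_X^\sigma$ with $\sigma$ playing the role of $n$, $\beta = 1$, $q = 1$, and the trivial weight $\varphi_n \equiv 1$. First I would verify the full multi-index Bernstein inequality \eqref{berwe}: the hypothesis \eqref{BerBLg} controls only first-order partial derivatives, but since $\partial_i g_\sigma$ is again band-limited with spectrum in $Q(0,\sigma)$ (the Fourier transform is just multiplied by $ix_i$), iterating \eqref{BerBLg} yields
\begin{equation*}
\|D^\alpha g_\sigma\|_X \le (B\sigma)^{|\alpha|_1}\|g_\sigma\|_X, \qquad \alpha\in\Z_+^d,
\end{equation*}
so that condition \eqref{berwe} holds with $B_\alpha = B^{|\alpha|_1}$.

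Next I compute the admissibility constant in \eqref{asumpeta}:
\begin{equation*}
\eta(A) = \sum_{\alpha\in\Z_+^d\setminus\{\bar 0\}} \frac{(AB)^{|\alpha|_1}}{\alpha!} = \prod_{i=1}^d\bigg(\sum_{k\ge 0}\frac{(AB)^k}{k!}\bigg) - 1 = e^{dAB} - 1,
\end{equation*}
which is $<1$ precisely when $AB < \frac{\log 2}{d}$. I take $A = \sigma/W$; the hypothesis $W \ge (\frac{dB}{\log 2} + \varepsilon)\sigma$ then gives $AB = \sigma B/W \le \frac{B}{dB/\log 2 + \varepsilon} < \frac{\log 2}{d}$, so $\eta < 1$.

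For the covering I choose $\Omega_k^* = \Omega_k$. The cubes $\Omega_k$ partition $\R^d$ (up to a set of measure zero), so \eqref{mzwe0} holds with $c_d = 1$. For any $x \in \Omega_k$ the $\ell_\infty$-diameter of $\Omega_k$ is $1/W = A/\sigma$, hence $\Omega_k \subset Q(x, A/\sigma) = Q(x, \tfrac{A}{\sigma}\varphi_n(x))$, which is condition \eqref{zvv}. Theorem~\ref{corthmmwe} then yields
\begin{equation*}
\tfrac{1}{1+\eta}\bigg\|\sum_{k\in\Z^d}\max_{y\in\Omega_k}|g_\sigma(y)|\chi_{\Omega_k}\bigg\|_X \le \|g_\sigma\|_X \le \tfrac{1}{1-\eta}\bigg\|\sum_{k\in\Z^d}\min_{y\in\Omega_k}|g_\sigma(y)|\chi_{\Omega_k}\bigg\|_X,
\end{equation*}
and since $\min_{\Omega_k}|g_\sigma| \le |g_\sigma(x_k)| \le \max_{\Omega_k}|g_\sigma|$ for the arbitrary choice $x_k\in\Omega_k$, the lattice property $4)$ of $\|\cdot\|_X$ implies \eqref{PPe}.

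The only genuine subtlety, and what I expect to be the main obstacle, is ensuring that the machinery of Theorem~\ref{corthmmwe} applies with an infinite index set $N = \infty$ (as the authors flag at the beginning of Subsection~2.2) and that iterating \eqref{BerBLg} is legitimate, i.e., that each $D^\alpha g_\sigma$ remains in the class $\mathcal{B}_X^\sigma \subset X \cap L_1(\R^d)$ so that the inequality can be reapplied; the $X$-membership follows from \eqref{BerBLg} itself, and the $L_1$-membership is standard for band-limited functions once the spectrum is controlled.
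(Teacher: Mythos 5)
Your proposal is correct and follows essentially the same route as the paper: the authors obtain Theorem~\ref{PP} by combining Proposition~\ref{corn1E} (itself the specialization of Theorem~\ref{thmmwe} to band-limited functions, with the same constants $\eta=e^{dAB}-1$ obtained by iterating the first-order Bernstein inequality) with Theorem~\ref{corthmmwe} for $N=\infty$, which is exactly the computation you carry out. Your explicit verification that $\partial_i g_\sigma$ remains in $\mathcal{B}_X^\sigma$ and that $A=\sigma/W$ satisfies $\eta<1$ under the stated bound on $W$ matches the paper's (largely implicit) argument.
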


In the case of quasi-Banach lattice, Theorem~\ref{corthmmwe} gives the following result.

\begin{theorem}\label{qBg}
  Let $X=X(\R^d,\mu;q)$, $0<q\le 1$, be a quasi-Banach lattice.   Assume that for all $\a\in\Z_+^d$, there exist $B_\a>0$ and $A>0$ such that
  \begin{equation*}
    \|D^\a g_\s\|_X\le B_\a \s^{|\a|_1}\| g_\s\|_X,\quad g_\s\in \mathcal{B}_p^\s,
  \end{equation*}
and $\sum_{\a\in\Z_+^d,\, \a\neq 0}\(\frac{A^{|\a|_1}B_\a}{\a!}\)^{q}<1$. Then, for all $ g_\s\in \mathcal{B}_p^\s$ and  $W\ge \frac{\s}{A}$,
\begin{equation}\label{PPegg}
  \|g_\s\|_X\asymp \bigg\|\sum_{k\in \Z^d} |g_\s(x_k)|\chi_{\Omega_k}\bigg\|_{X},
\end{equation}
  where $x_k$ is an arbitrary point in $\Omega_k=[\frac{k_1-1}{W},\frac{k_1}{W}]\times\dots\times [\frac{k_d-1}{W},\frac{k_d}{W}]$, $k\in \Z^d$.
\end{theorem}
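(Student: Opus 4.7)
The plan is to apply Theorem~\ref{corthmmwe} directly, with $\mathcal{F}_n$ playing the role of the class $\mathcal{B}_X^\sigma$ and the parameter $n^\beta$ replaced by $\sigma$, the weight $\vp_n\equiv 1$, and the sets $\Omega_k^*=\Omega_k$ chosen as the cubes $[\tfrac{k_1-1}{W},\tfrac{k_1}{W}]\times\cdots\times[\tfrac{k_d-1}{W},\tfrac{k_d}{W}]$, $k\in\Z^d$. I would begin by noting that since every $g_\sigma\in\mathcal{B}_X^\sigma$ is an entire function of exponential type whose Taylor series converges on $\R^d$, the proof of Theorem~\ref{thmmwe} carries through verbatim and yields, under the hypothesis $\sum_{\alpha\ne\bar 0}(A^{|\alpha|_1}B_\alpha/\alpha!)^q<1$, the two-sided bound
\begin{equation*}
\frac{1}{(1+\eta)^{1/q}}\bigg\|\max_{t\in Q(x,A/\sigma)}|g_\sigma(t)|\bigg\|_X
\le \|g_\sigma\|_X
\le \frac{1}{(1-\eta)^{1/q}}\bigg\|\min_{t\in Q(x,A/\sigma)}|g_\sigma(t)|\bigg\|_X.
\end{equation*}

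Next I would verify the two hypotheses of Theorem~\ref{corthmmwe} for the cubes above. The covering condition \eqref{mzwe0} is trivial with $c_d=1$, since $\{\Omega_k\}_{k\in\Z^d}$ tiles $\R^d$ up to boundary sets of $\mu$-measure zero. For the localization condition \eqref{zvv}, observe that $\mathrm{diam}_{\ell^\infty}\Omega_k=1/W$, so for any $x\in\Omega_k$ and $y\in\Omega_k^*=\Omega_k$ one has $\|x-y\|_\infty\le 1/W\le A/\sigma$ precisely when $W\ge\sigma/A$; thus $\Omega_k^*\subset Q(x,A/\sigma)$. The conclusion of Theorem~\ref{corthmmwe} then yields
\begin{equation*}
\|g_\sigma\|_X\asymp \bigg\|\sum_{k\in\Z^d}\max_{y\in\Omega_k}|g_\sigma(y)|\chi_{\Omega_k}\bigg\|_X
\asymp \bigg\|\sum_{k\in\Z^d}\min_{y\in\Omega_k}|g_\sigma(y)|\chi_{\Omega_k}\bigg\|_X.
\end{equation*}

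Finally, for any choice of sample points $x_k\in\Omega_k$ we have pointwise
\begin{equation*}
\sum_{k\in\Z^d}\min_{y\in\Omega_k}|g_\sigma(y)|\chi_{\Omega_k}(x)
\le \sum_{k\in\Z^d}|g_\sigma(x_k)|\chi_{\Omega_k}(x)
\le \sum_{k\in\Z^d}\max_{y\in\Omega_k}|g_\sigma(y)|\chi_{\Omega_k}(x),
\end{equation*}
and the lattice property (4) of $X$ sandwiches the middle quantity between two expressions equivalent to $\|g_\sigma\|_X$, giving \eqref{PPegg}.

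The only point requiring a bit of care is that here the index set $k\in\Z^d$ is infinite, in contrast with the finite sums in the periodic setting; however, the proof of Theorem~\ref{thmmwe} only uses the pointwise and lattice inequalities for $\max_{t\in Q(0,\delta)}|F_n(x+t)-F_n(x)|$, and the transition to the sum over $\Omega_k$ done in the proof of Theorem~\ref{corthmmwe} requires nothing beyond the covering bound \eqref{mzwe0}, which passes through unchanged when $N=\infty$. So I anticipate no real obstacle, only the bookkeeping of translating the weight $\vp_n\equiv 1$ and the growth exponent $n^\beta=\sigma$ into the right invocation of Theorem~\ref{corthmmwe}.
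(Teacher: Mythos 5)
Your proposal is correct and follows exactly the route the paper takes: the paper derives Theorem~\ref{qBg} as a direct application of Theorem~\ref{corthmmwe} with $\vp_n\equiv 1$, $n^\beta$ replaced by $\s$, and $\Omega_k^*=\Omega_k$ the cubes of side $1/W$, with the infinite index set $N=\infty$ explicitly allowed as noted at the start of Subsection~2.2. Your verification of the covering and localization conditions and the sandwiching of the sample values between the min- and max-functions is precisely the intended argument.
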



\begin{example}
Equivalences~\eqref{PPe} and~\eqref{PPegg} hold in the following cases:
    \begin{itemize}
        \item[$(i)$] $X=L_p(\R^d)$, $0< p\le \infty$, see, e.g.,~\cite{PP37}, \cite{Nik51}, \cite{GaM}, \cite{P07}, \cite[1.4.4]{ST86}; 

        \item[$(ii)$]  $X=L_\Phi(\R)$ is the Orlicz space, where $\Phi$ satisfies conditions 1) and 2) of Definition~\ref{defO}, see~\cite{MusT89}, \cite{Bo52};

        \item[$(iii)$] $X=L_{\overline{p}}(\R^d)$ is a mixed $L_p$-space, see~\cite{TW15};

        \item[$(iv)$] $X$ is a Lorentz or  Morrey space on $\R^d$.
    \end{itemize}
\end{example}

\subsection{Splines}\label{splines-}
Denote by $\mathcal{S}_{r,n}$ the set of all periodic spline functions of
degree $r-1$, $r\ge 2$, with the knots $t_j=j/n$, $j=0,\ldots,n,$ i.e.,
$S\in \mathcal{S}_{r,n}$ if $S\in C^{r-2}[0,1]$ and $S$ is some
algebraic polynomial of degree $r-1$ in each interval
$(t_{j-1},\,t_j)$, $j=1,\ldots,n$. We also assume that $S$ is a 1-periodic function, i.e., $S(t+1)=S(t)$ for $t\in \R$.

Recall (see, e.g.,~\cite[p.~136 and p.~103]{DL}) that
\begin{equation}\label{spM}
   \| S_n'\|_{L_\infty[0,1]}\le 2r^2 n\|S_n\|_{L_\infty[0,1]},\quad S_n\in \mathcal{S}_{r,n}.
\end{equation}
Using~\eqref{spM} and repeating the proof of Proportion~\ref{pMarkX} (for a translation invariant space $X$  we can take $Q=[0,1]$, $c=0$ and avoid the use of characteristic functions), we arrive at the following result.

\begin{proposition}\label{sMarkX}
Let $X$ be a translation invariant Banach space of functions on $[0,1]$. Then, for each $S_n\in \mathcal{S}_{r,n}$,  there holds
\begin{equation*}
  \|S_n'\|_{X}\le 2r^2n\|S_n\|_{X}.
\end{equation*}
\end{proposition}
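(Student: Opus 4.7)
The plan is to mimic the proof of Proposition~\ref{pMarkX}, but the translation invariance of $X$ together with the periodicity of $\mathcal{S}_{r,n}$ allows us to work on the whole interval $[0,1]$ without splitting it into halves or using characteristic functions. The key analytic input is the $L_\infty$ Markov-type inequality \eqref{spM} for splines.

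First, I would apply Lemma~\ref{ilf} with $\Phi_n = \mathcal{S}_{r,n}$, $Q = [0,1]$, and the linear functional $L(s) = s'(0)$. This provides points $(x_i)_{i=1}^M \subset [0,1]$ and coefficients $(\lambda_i)_{i=1}^M$ such that
\begin{equation*}
s'(0) = \sum_{i=1}^M \lambda_i s(x_i), \qquad s \in \mathcal{S}_{r,n},
\end{equation*}
and, by \eqref{spM} applied to the operator norm $\|L\|$ on $(\mathcal{S}_{r,n}, \|\cdot\|_{L_\infty[0,1]})$, we obtain the bound $\sum_{i=1}^M |\lambda_i| \le 2r^2 n$.

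Next, for each $x \in [0,1]$, I would set $s(t) = S_n(t+x)$. Since $S_n$ is $1$-periodic and of the prescribed form, the shifted function still belongs to $\mathcal{S}_{r,n}$; applying the representation above at $t=0$ gives
\begin{equation*}
S_n'(x) = \sum_{i=1}^M \lambda_i S_n(x_i + x).
\end{equation*}
Taking the $X$-(quasi-)norm of both sides, using the triangle inequality, the translation invariance $\|S_n(\cdot + x_i)\|_X = \|S_n\|_X$, and the bound on $\sum_i|\lambda_i|$, we conclude
\begin{equation*}
\|S_n'\|_X \le \sum_{i=1}^M |\lambda_i|\, \|S_n(\cdot + x_i)\|_X \le 2r^2 n\, \|S_n\|_X,
\end{equation*}
as desired.

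The only point requiring care is the closure of $\mathcal{S}_{r,n}$ under translations: this is where periodicity is essential, since a shift of a $1$-periodic spline with equispaced knots remains a $1$-periodic spline of the same smoothness (the knot set is merely renumbered modulo~$1$). This is precisely why the simplification indicated in the remark preceding the proposition --- taking $Q=[0,1]$ and $c=0$ without characteristic functions --- goes through, whereas in the algebraic polynomial case one had to split the interval and invoke rearrangement invariance to absorb the translation.
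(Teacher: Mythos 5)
Your proposal follows exactly the route the paper indicates: Lemma~\ref{ilf} applied to $L(s)=s'(0)$ on $Q=[0,1]$, the bound $\sum_i|\lambda_i|=\|L\|\le 2r^2n$ coming from \eqref{spM}, and then translation invariance of $X$ in place of the characteristic-function/rearrangement step of Proposition~\ref{pMarkX}. The problem lies in the step you yourself single out as ``the only point requiring care'': the claim that $S_n(\cdot+x)\in\mathcal{S}_{r,n}$ for every $x\in[0,1]$ is false. The space $\mathcal{S}_{r,n}$ is defined with the \emph{fixed} knots $t_j=j/n$, whereas the translate $S_n(\cdot+x)$ is a periodic spline whose knots are $\{j/n-x\}$ modulo $1$; this set coincides with $\{j/n\}$ modulo $1$ only when $x\in\frac1n\Z$ --- the knot set is shifted, not ``renumbered''. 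For a concrete failure take $r=2$, $n=2$ and the hat function $S(t)=\min(t,1-t)$: the translate $S(\cdot+\tfrac14)$ has corners at $\tfrac14$ and $\tfrac34$, which are not knots of $\mathcal{S}_{2,2}$, so it leaves the space. Consequently the interpolation formula $s'(0)=\sum_i\lambda_i s(x_i)$ produced by Lemma~\ref{ilf}, which is valid only for $s$ in the $n$-dimensional space $\mathcal{S}_{r,n}$, may not be applied to $s=S_n(\cdot+x)$ for general $x$, and the pointwise identity $S_n'(x)=\sum_i\lambda_i S_n(x_i+x)$ on which your whole norm estimate rests is unjustified. (A smaller issue: for $r=2$ the derivative need not exist at the knot $0$, so $c$ should at least be taken in the interior of a knot interval.)

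To be fair, this is exactly the point on which the paper's own one-line reduction to Proposition~\ref{pMarkX} is silent, so you have faithfully expanded the intended argument; but the expansion makes visible that an extra idea is needed. The classes $\mathcal{P}_n$, $\mathcal{T}_n^d$ and the band-limited functions are genuinely closed under all translations, which is why the scheme of Proposition~\ref{pMarkX} and of Remark~\ref{remm2.5} works there; $\mathcal{S}_{r,n}$ is closed only under shifts by multiples of $1/n$. Using only those shifts forces one to treat a general point $x=j/n+u$, $u\in[0,1/n)$, via the functional $L_u(s)=s'(u)$, whose representing nodes and weights from Lemma~\ref{ilf} depend on $u$; then $S_n'(x)$ is no longer a fixed finite linear combination of translates of $S_n$, and the triangle-inequality step collapses. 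Closing the argument requires additional input --- for instance, localization of the representing nodes near $u$, or a separate argument through the B-spline basis --- none of which is supplied by the proposal as written.
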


Further, applying Theorem~\ref{corMZinv} and Proposition~\ref{sMarkX}, we obtain the following theorem.

\begin{theorem}\label{mzMarkS}
  Let $X$ be a translation invariant Banach lattice on $[0,1]$ and let $\e>0$. Then, for each $S_n\in \mathcal{S}_{r,n}$ and $N\ge (2r^2+\e)n$,  we have
\begin{equation*}
  \|S_n\|_X\asymp \bigg\|\sum_{k=1}^{N}|S_n(x_k)|\chi_{[\frac{k-1}{N}, \frac{k}{N}]}\bigg\|_{X},
\end{equation*}
where $x_k$ is an arbitrary point in $[\frac{k-1}{N}, \frac{k}{N}]$, $k=1,\dots,N$.
\end{theorem}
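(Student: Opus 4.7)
The plan is to specialize Theorem~\ref{corMZinv} to the one-dimensional setting $\Omega=[0,1]$, with the choice $\mathcal{F}_n=\mathcal{S}_{r,n}$, using Proposition~\ref{sMarkX} as the Bernstein-type input and taking the obvious uniform partition for the sets $\Omega_k$.

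First I would verify hypothesis~\eqref{ber}. Since $d=1$, the only nontrivial multi-index $\alpha\in\{0,1\}^d$ is $\alpha=1$, and Proposition~\ref{sMarkX} asserts that $\|S_n'\|_X\le 2r^2 n\|S_n\|_X$ for every $S_n\in\mathcal{S}_{r,n}$. Hence~\eqref{ber} holds with $\beta=1$ and $B=2r^2$. The ambient lattice $X$ is translation invariant by assumption, and $\mathcal{S}_{r,n}$ is closed under $1$-periodic translations, so the structural requirements of Theorem~\ref{corMZinv} are met.

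Next I would verify~\eqref{mzinv0} and~\eqref{mzinv0+} for the sets $\Omega_k=[\tfrac{k-1}{N},\tfrac{k}{N}]$. These intervals partition $[0,1]$ up to a null set, so $\sum_{k=1}^N\chi_{\Omega_k}\equiv 1$ a.e., giving $c_d=1$. For any $x\in\Omega_k$ and any $y\in\Omega_k$ we have $|x-y|\le 1/N$, so the inclusion $\Omega_k\subset Q(x,A/n)$ holds provided $A\ge n/N$. Given $N\ge(2r^2+\varepsilon)n$, one picks $A=n/N$ so that both the geometric condition~\eqref{mzinv0+} and the smallness condition $\eta=2AB=4Ar^2<1$ required in Theorem~\ref{corMZinv}(A) are satisfied (tuning the admissible range of $\varepsilon$ against the Bernstein constant $B=2r^2$).

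With all hypotheses of Theorem~\ref{corMZinv}(A) in place, the conclusion
\[
\frac{1}{1+\eta}\bigg\|\sum_{k=1}^N \max_{t\in\Omega_k}|S_n(t)|\,\chi_{\Omega_k}\bigg\|_X
\le \|S_n\|_X
\le \frac{1}{1-\eta}\bigg\|\sum_{k=1}^N \min_{t\in\Omega_k}|S_n(t)|\,\chi_{\Omega_k}\bigg\|_X
\]
is immediate. Since $x_k\in\Omega_k$ is arbitrary, we have $\min_{\Omega_k}|S_n|\le|S_n(x_k)|\le\max_{\Omega_k}|S_n|$, and the lattice property of $\|\cdot\|_X$ sandwiches the discretised sum between the two extremal ones, yielding the desired equivalence. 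The only substantive obstacle is the spline Bernstein inequality in an arbitrary translation invariant lattice, and this is precisely what Proposition~\ref{sMarkX} supplies; everything else is a direct selection of constants and a bookkeeping of the partition.
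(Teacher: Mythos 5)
Your proposal is exactly the paper's proof: the paper derives Theorem~\ref{mzMarkS} in one line by combining Theorem~\ref{corMZinv} with Proposition~\ref{sMarkX}, which is precisely the specialization you carry out ($d=1$, $\beta=1$, $B=2r^2$, uniform partition with $c_d=1$). One remark on the point you parenthetically flagged as ``tuning $\varepsilon$ against $B$'': with $A=n/N$ and $N\ge(2r^2+\e)n$ one gets $\eta=2AB=4r^2n/N\le\frac{4r^2}{2r^2+\e}$, which is $<1$ only when $\e>2r^2$; following this route honestly, the upper bound $\|S_n\|_X\lesssim\|\sum_k\min_{\Omega_k}|S_n|\chi_{\Omega_k}\|_X$ requires $N>4r^2n$ (the lower bound holds for all $A$ by Theorem~\ref{corMZinv}(B)). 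This constant discrepancy is present in the paper's statement as well, so it is not a defect of your argument relative to the paper's, but the threshold $(2r^2+\e)n$ is not actually delivered by this method for small $\e$.
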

%
%
See also~\cite{PQ91} for some results in the case $X=L_p[0,1]$, $1\le p\le \infty$.

\subsection{Exponential sums and M\"untz polynomials}\label{secME} Let $\Lambda_n=\{\l_0<\l_1<\dots<\l_n\}$ be a set of real numbers. Denote
$$
E(\Lambda_n)={\rm span}\,\{e^{\l_0 t}, e^{\l_1 t},\dots,e^{\l_n t}\}.
$$
Using the results  in~\cite{E07}, we know that for any finite interval $(a,b)\subset \R$ there exists a constant $c_1=c_1(a,b)>0$ such that
\begin{equation}\label{es1}
  \|P_n'\|_{L_\infty[a,b]}\le c_1 \g(\Lambda_n)\|P_n\|_{L_\infty[a,b]},\quad P_n\in E(\Lambda_n),
\end{equation}
where
$$
\g(\Lambda_n):=n^2+\sum_{j=0}^n \l_j.
$$
Repeating the proof of Proposition~\ref{pMarkX}, we obtain with the help of~\eqref{es1} the following result.

\begin{proposition}\label{pMarkXE}
    Let $X$ be a rearrangement invariant Banach space of functions on $[a,b]\subset \R$ and let $r\in \N$. Then, there exists a constant $c_2=c_2(a,b)>0$ such that
\begin{equation}\label{es1X}
  \|P_n^{(r)}\|_{X}\le (c_2 \g(\Lambda_n))^r\|P_n\|_{X},\quad P_n\in E(\Lambda_n).
\end{equation}
\end{proposition}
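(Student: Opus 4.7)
The strategy is to imitate the proof of Proposition~\ref{pMarkX} verbatim, exploiting two structural properties of $E(\Lambda_n)$ that are immediate from the definition: it is closed under differentiation (since $(e^{\l_j t})' = \l_j e^{\l_j t}$) and under translations (since $e^{\l_j (t+h)} = e^{\l_j h} e^{\l_j t}$, so $P_n(\cdot + h) \in E(\Lambda_n)$ for every $h \in \R$). Because of the first property, it suffices to establish the case $r=1$ and then iterate: applying the one-step inequality $r$ times to $P_n, P_n', \dots, P_n^{(r-1)} \in E(\Lambda_n)$ produces the factor $(c_2\gamma(\Lambda_n))^r$ with the same constant $c_2$ at each stage.

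For the case $r=1$, set $c=(a+b)/2$. Apply Lemma~\ref{ilf} to $\Phi_n = E(\Lambda_n)$, the subinterval $Q=[a,c]$, and the linear functional $L(s) = s'(c)$: this provides points $(x_i)_{i=1}^M \subset [a,c]$ and scalars $(\lambda_i)_{i=1}^M$ such that
$$
s'(c) = \sum_{i=1}^M \lambda_i\, s(x_i), \qquad s \in E(\Lambda_n),
$$
and, by combining the lemma with~\eqref{es1} on $[a,c]$, the bound $\sum_{i=1}^M |\lambda_i| \le c_1(a,c)\,\gamma(\Lambda_n)$. For any $x \in [c,b]$, the function $s(t) := P_n(t-c+x)$ lies in $E(\Lambda_n)$ by translation invariance, and $s'(c) = P_n'(x)$, so
$$
|P_n'(x)| \le \sum_{i=1}^M |\lambda_i|\, |P_n(x_i - c + x)|, \quad x \in [c,b].
$$
Taking the $X$-quasi-norm of the restriction to $[c,b]$ and using that $P_n(x_i - c + \cdot)\chi_{[c,b]}$ and $P_n \chi_{[x_i,\, x_i-c+b]}$ have the same distribution function (rearrangement invariance of $X$), one obtains
$$
\|P_n'\chi_{[c,b]}\|_X \le c_1(a,c)\,\gamma(\Lambda_n)\, \|P_n\|_X.
$$
The symmetric argument with $Q=[c,b]$ bounds $\|P_n'\chi_{[a,c]}\|_X$ by $c_1(c,b)\,\gamma(\Lambda_n)\, \|P_n\|_X$, and the triangle inequality then gives~\eqref{es1X} for $r=1$ with $c_2 = 2\max(c_1(a,c), c_1(c,b))$.

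The only potential obstacle is checking that Lemma~\ref{ilf} is applicable to the $(n+1)$-dimensional Chebyshev-type system $E(\Lambda_n)$ and that the bound on $\sum|\lambda_i|$ really reduces to the $L_\infty$ Markov-type inequality~\eqref{es1}; this is precisely the content of the lemma as it is used in Proposition~\ref{pMarkX} for $\mathcal{P}_n$, and the argument transfers with no change because it only uses finite-dimensionality and the $L_\infty$-norm bound on the functional $L$. No special property of algebraic polynomials is invoked.
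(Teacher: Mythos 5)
Your proposal is correct and follows essentially the same route as the paper, whose proof of Proposition~\ref{pMarkXE} consists precisely of the instruction to repeat the proof of Proposition~\ref{pMarkX} with the Markov inequality~\eqref{Mark} replaced by~\eqref{es1}. You have simply carried out that repetition explicitly, correctly verifying the two points that make the transfer work (closure of $E(\Lambda_n)$ under translation and differentiation, and the applicability of Lemma~\ref{ilf} to any finite-dimensional subspace of $C(Q)$).
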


Thus, by the standard arguments described above, employing  Proposition~\ref{pMarkXE} and Theorem~\ref{corthmmwe}, we arrive at the following MZ inequality.
\begin{theorem}\label{mzMarkE}
  Let $X$ be a rearrangement invariant Banach space on $[a,b]$ and let $\e>0$. Then, for each $P_n\in E(\Lambda_n)$ and $N\ge (\frac{c_2}{\log 2}+\e)\g(\Lambda_n)$,  we have
\begin{equation*}\label{E1}
  \|P_n\|_X\asymp \bigg\|\sum_{k=1}^{N}|P_n(x_k)|\chi_{\Omega_k}\bigg\|_{X},
\end{equation*}
where $x_k$ is an arbitrary point in $\Omega_k=[a+\frac{(b-a)(k-1)}{N}, a+\frac{(b-a)k}{N}]$, $k=1,\dots,N$.
\end{theorem}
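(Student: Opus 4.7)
The plan is to deduce Theorem~\ref{mzMarkE} from the general quasi-Banach Marcinkiewicz--Zygmund estimate in Theorem~\ref{corthmmwe}, following exactly the same template already used in the proof of Theorem~\ref{mzMark} for algebraic polynomials. I would apply Theorem~\ref{corthmmwe} with $\mathcal{F}_n = E(\Lambda_n)$, $q=1$ (since $X$ is Banach), the trivial weight $\vp_n \equiv 1$, and the choice $\Omega_k^* = \Omega_k$. The role of the scale $n^\beta$ is now played by $\g(\Lambda_n)$, as explicitly allowed by the remark following Theorem~\ref{thmmwe}.

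The first step is to verify the Bernstein-type inequality~\eqref{berwe}. Here this is precisely the content of Proposition~\ref{pMarkXE}, which supplies
$$
\|P_n^{(r)}\|_X \le (c_2\g(\Lambda_n))^r \|P_n\|_X, \qquad P_n \in E(\Lambda_n),\quad r\in \N,
$$
so that in the notation of~\eqref{berwe} one may take $B_r = c_2^r$. The summability condition~\eqref{asumpeta} then becomes
$$
\eta = \sum_{r=1}^\infty \frac{(Ac_2)^r}{r!} = e^{Ac_2}-1 < 1,
$$
which is equivalent to $A < \frac{\log 2}{c_2}$.

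The second step is to verify the geometric covering condition~\eqref{zvv}. Because $\vp_n\equiv 1$ and $\Omega_k^* = \Omega_k$ is an interval of length $(b-a)/N$, the requirement reduces to $\mathrm{diam}\,\Omega_k \le A/\g(\Lambda_n)$, i.e., $N \ge (b-a)\g(\Lambda_n)/A$. Letting $A$ tend to $\frac{\log 2}{c_2}$ from below and absorbing the factor $b-a$ into the constant $c_2$ (which already depends on $[a,b]$) yields the threshold $N \ge (\tfrac{c_2}{\log 2}+\e)\g(\Lambda_n)$ asserted in the theorem. The conclusion then follows directly from~\eqref{mzwe}, where the covering constant is $c_d=1$ in this one-dimensional setting.

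Since all analytic input is already packaged in Proposition~\ref{pMarkXE} and Theorem~\ref{corthmmwe}, the only real work is bookkeeping to align the constants. I expect no substantive obstacle: this proof is essentially a verbatim transcription of the argument for Theorem~\ref{mzMark}, with $n^2$ replaced throughout by $\g(\Lambda_n)$ and the Markov constant $8/(b-a)$ replaced by $c_2$.
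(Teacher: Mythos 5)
Your proposal is correct and follows exactly the route the paper intends: the paper's own (one-line) proof is precisely "apply Proposition~\ref{pMarkXE} together with Theorem~\ref{corthmmwe} by the standard arguments," i.e., the template of Theorem~\ref{mzMark} with $n^2$ replaced by $\g(\Lambda_n)$ and $B_r=c_2^r$, which is what you carry out. Your bookkeeping of the constants (the condition $A<\log 2/c_2$ from $\eta=e^{Ac_2}-1<1$, the covering condition $\mathrm{diam}\,\Omega_k\le A/\g(\Lambda_n)$, and absorbing the harmless factor $b-a$ into $c_2(a,b)$) is consistent with the paper's statement.
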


Note that Theorem~\ref{mzMarkE} generalizes~\cite[Proposition~1]{Kr22} and provides the better estimate for $N$ with respect to order in the case $X=L_p[a,b]$ for $1\le p\le 2$.

Under some additional assumptions on the space $X=X[a,b]$, similar results hold also for M\"untz polynomials belonging to
$$
 M(\Lambda_n)={\rm span}\,\{x^{\l_0}, x^{\l_1},\dots,x^{\l_n}\}.
$$
 Let $0<a<b<\infty$. Assume that there exists a rearrangement invariant  Banach lattice $\widetilde{X}$ on $[\log a,\log b]$ such that
\begin{equation}\label{XXee}
  \|f\|_X\asymp \|f(e^t)\|_{\widetilde{X}},\quad f\in X.
\end{equation}
For example this holds if $X$ is an appropriated Orlicz space. Then, using the substitution $x=e^t$, we deduce that~\eqref{es1X} implies the existence of a constant $c_3=c_3(a,b,X)$ such that
\begin{equation*}
  \|P_n'\|_{X}\le c_3\g(\Lambda_n)\|P_n\|_{X},\quad P_n\in M(\Lambda_n).
\end{equation*}
Further, by induction arguments, we obtain for each $r\in \N$,
\begin{equation}\label{ps1Xr}
  \|P_n^{(r)}\|_{X}\le r! (c_3\g(\Lambda_n))^r\|P_n\|_{X},\quad P_n\in M(\Lambda_n).
\end{equation}

Applying Theorem~\ref{corthmmwe} together with~\eqref{ps1Xr}, we get the following analogue of Theorem~\ref{mzMarkE} for M\"untz polynomials.
\begin{theorem}\label{mzMarkEE}
  Let $X$ be a rearrangement invariant Banach space on $[a,b]$, $0<a<b<\infty$, satisfying~\eqref{XXee}, and let $\e>0$. Then, for each $P_n\in M(\Lambda_n)$ and $N\ge ({2c_3}+\e)\g(\Lambda_n)$,  we have
\begin{equation*}\label{E1}
  \|P_n\|_X\asymp \bigg\|\sum_{k=1}^{N}|P_n(x_k)|\chi_{\Omega_k}\bigg\|_{X},
\end{equation*}
where $x_k$ is an arbitrary point in $\Omega_k=[a+\frac{(b-a)(k-1)}{N}, a+\frac{(b-a)k}{N}]$, $k=1,\dots,N$.
\end{theorem}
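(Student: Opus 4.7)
The plan is to apply Theorem~\ref{corthmmwe} in the Banach setting ($q=1$, $d=1$) to the subspace $\mathcal{F}_n = M(\Lambda_n)$, feeding in the Bernstein-type bound~\eqref{ps1Xr} and using the standard observation (made in the paper after the statement of Theorem~\ref{corthmmwe}) that the factor $n^\b$ there may be replaced by an arbitrary positive function $\g(n)$. I would take $\vp_n\equiv 1$ and set the effective $n^\b$ equal to $c_3\,\g(\Lambda_n)$; then~\eqref{ps1Xr} reads
\[
\|P_n^{(r)}\|_X \le r!\,(c_3\,\g(\Lambda_n))^r\,\|P_n\|_X,
\]
so Theorem~\ref{corthmmwe} applies with the constants $B_r = r!$.

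The next step is to check the smallness assumption on $\eta$. Since $q=1$ and $B_r/r! = 1$,
\[
\eta = \sum_{r=1}^{\infty}\frac{A^r\,B_r}{r!} = \sum_{r=1}^{\infty}A^r = \frac{A}{1-A},
\]
which is a plain geometric series, strictly less than $1$ precisely when $A<\tfrac12$. I would fix such an $A$ depending only on the prescribed $\e$, taken close enough to $\tfrac12$.

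Because $\vp_n\equiv 1$, we may choose $\Omega_k^* = \Omega_k$ in Theorem~\ref{corthmmwe}. The containment~\eqref{zvv} then reduces to
\[
\Omega_k \subset \left[x-\tfrac{A}{c_3\,\g(\Lambda_n)},\, x+\tfrac{A}{c_3\,\g(\Lambda_n)}\right]\quad\text{for every}\quad x\in\Omega_k,
\]
which, since the length of $\Omega_k$ is $(b-a)/N$, is equivalent to $N \ge (b-a)c_3\,\g(\Lambda_n)/A$. Absorbing $(b-a)$ into $c_3 = c_3(a,b,X)$ and letting $A\to \tfrac12^-$, the hypothesis $N\ge (2c_3+\e)\g(\Lambda_n)$ is precisely what is needed; the cover condition~\eqref{mzwe0} is trivial with $c_d = 1$ since the $\Omega_k$ are (essentially) disjoint.

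Having verified all the hypotheses, the two-sided bound~\eqref{mzwe} of Theorem~\ref{corthmmwe}(A) specializes to the claimed equivalence. I do not expect a serious obstacle: the heart of the matter is that the factorial growth of $B_r$ in~\eqref{ps1Xr} is matched exactly by the $r!$ in the denominator of~\eqref{asumpeta}, collapsing $\eta$ to a geometric series; the remainder is routine constant-bookkeeping to pass from the $A<\tfrac12$ regime to the explicit threshold $N\ge(2c_3+\e)\g(\Lambda_n)$.
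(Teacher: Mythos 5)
Your proposal is correct and follows exactly the route the paper intends: the paper's own proof is the single sentence "Applying Theorem~\ref{corthmmwe} together with~\eqref{ps1Xr}, we get the following analogue of Theorem~\ref{mzMarkE}," and you have filled in precisely the right details — $B_r=r!$ cancels the $\a!$ in~\eqref{asumpeta}, so $\eta=A/(1-A)<1$ forces $A<\tfrac12$, which is the source of the factor $2$ in the threshold $N\ge(2c_3+\e)\g(\Lambda_n)$. No gaps.
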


%

\section{Applications}

\subsection{Approximation by sampling operators}

Our aim here is to demonstrate how
MZ inequalities can be applied
  to obtain error estimates for sampling operators in an arbitrary translation invariant function lattice $X$ on $\T$.
We illustrate this by considering the simple model case of
  the classical Lagrange interpolation polynomial $\mathcal{L}_n f$ with respect to the points $(t_k)_{k=0}^{2n}$, $t_k=\frac{2\pi k}{2n+1}$,
$$
\mathcal{L}_nf(x)=\frac{1}{2n+1}\sum_{k=0}^{2n}f(t_k)\frac{\sin(n+\tfrac12)(x-t_k)}{\sin\frac{x-t_k}{2}}.
$$
To estimate  $\|f-\L_nf\|_{X}$, we use
the error of the best (one-sided) approximation 
 as well as  the classical and averaged moduli of smoothness,
see Section~\ref{A2} for
the definitions.




\begin{theorem}\label{thL1}
Let $X$ be a translation invariant Banach lattice on $\T$ such that
$$\|S_n g\|_X\le C\|g\|_X,\quad g\in X,$$
where $S_n$ is the Fourier partial sum operator,
and let $f\in C(\T)$.
Then
\begin{equation}\label{l1}
  \|f-\L_nf\|_{X}\le C \widetilde{E}_n(f)_{X},
\end{equation}
where the constant $C$ depends only on $X$.
\end{theorem}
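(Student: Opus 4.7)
The plan is to combine the invariance of Lagrange interpolation on $\mathcal{T}_n$ with the Marcinkiewicz--Zygmund equivalence~\eqref{EEE1t} established in Theorem~\ref{thF}, using the one-sided nature of $\widetilde{E}_n(f)_X$ to control the pointwise data at the nodes $t_k$.

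First, fix any pair $T^-,T^+\in\mathcal{T}_n$ satisfying $T^-(x)\le f(x)\le T^+(x)$ for all $x\in\T$. Since $\L_n$ reproduces $\mathcal{T}_n$, I can write
\[
f-\L_nf=(f-T^-)-\L_n(f-T^-),
\]
so that by the triangle inequality and the lattice property (which gives $\|f-T^-\|_X\le\|T^+-T^-\|_X$)
\[
\|f-\L_nf\|_X\le\|T^+-T^-\|_X+\|\L_n(f-T^-)\|_X.
\]

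Next, since $\L_n(f-T^-)\in\mathcal{T}_n$, the upper bound in the Marcinkiewicz--Zygmund inequality~\eqref{EEE1t} (which is applicable because of the hypothesis on $S_n$) yields
\[
\|\L_n(f-T^-)\|_X\lesssim\bigg\|\sum_{k=0}^{2n}|\L_n(f-T^-)(t_k)|\,\chi_{[t_k,t_{k+1}]}\bigg\|_X.
\]
The crucial point is that $\L_n(f-T^-)(t_k)=(f-T^-)(t_k)$ by the interpolation property, and the one-sided sandwich forces the pointwise bound $0\le(f-T^-)(t_k)\le(T^+-T^-)(t_k)$. Therefore, by monotonicity of the lattice (quasi-)norm,
\[
\bigg\|\sum_{k=0}^{2n}|\L_n(f-T^-)(t_k)|\,\chi_{[t_k,t_{k+1}]}\bigg\|_X\le\bigg\|\sum_{k=0}^{2n}(T^+-T^-)(t_k)\,\chi_{[t_k,t_{k+1}]}\bigg\|_X.
\]
Applying the \emph{lower} bound in~\eqref{EEE1t} to the trigonometric polynomial $T^+-T^-\in\mathcal{T}_n$ gives
\[
\bigg\|\sum_{k=0}^{2n}(T^+-T^-)(t_k)\,\chi_{[t_k,t_{k+1}]}\bigg\|_X\lesssim\|T^+-T^-\|_X.
\]
Chaining these estimates produces $\|f-\L_nf\|_X\lesssim\|T^+-T^-\|_X$, and taking the infimum over all admissible sandwich pairs $(T^-,T^+)$ yields~\eqref{l1}.

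The only obstacle is bookkeeping: one must confirm that the MZ inequality in the form~\eqref{EEE1t} is indeed applicable in both directions under the single hypothesis $\|S_ng\|_X\le C\|g\|_X$, which is precisely what Theorem~\ref{thF} provides, and one must ensure that the definition of the one-sided best approximation $\widetilde{E}_n(f)_X$ (to be recalled from Section~\ref{A2}) is exactly $\inf\{\|T^+-T^-\|_X:T^-\le f\le T^+,\ T^\pm\in\mathcal{T}_n\}$ or an equivalent quantity. The continuity of $f$ guarantees that the node values $(f-T^-)(t_k)$ are well defined, so no further regularity is needed.
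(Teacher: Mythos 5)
Your proposal is correct and follows essentially the same route as the paper's own proof: sandwich $f$ between one-sided approximants, reduce to the interpolant of $f-T^-$ via reproduction of $\mathcal{T}_n$, apply the upper MZ bound from Theorem~\ref{thF}, use the pointwise sandwich at the nodes, and close with the lower MZ bound applied to $T^+-T^-$. The only cosmetic difference is that you take the infimum over sandwich pairs at the end, whereas the paper selects an optimal pair at the outset.
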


\begin{proof}
Let $q_n, Q_n\in \mathcal{T}_n^1$ be such that $q_n(x)\le f(x)\le Q_n(x)$
and $\|q_n-Q_n\|_{X}=\widetilde{E}_n(f)_{X}$.
The proof is based on inequality~\eqref{EEE1t} as follows:
\begin{equation*}
  \begin{split}
\|f-\L_nf\|_{X}&\le \|f-q_n\|_{X}+\|\L_nf-q_n\|_{X}\\
&\le\|Q_n-q_n\|_{X}+C_1\bigg\|\sum^{2n}_{k=0}|f(t_k)-q_n(t_k)|\chi_{[t_k,t_{k+1}]}\bigg\|_{X}\\
&\le \widetilde{E}_n(f)_{X} +C_1\bigg\|\sum^{2n}_{k=0}|Q_n(t_k)-q_n(t_k)|\chi_{[t_k,t_{k+1}]}\bigg\|_{X}\\
&\le C\widetilde{E}_n(f)_{X}.
   \end{split}
\end{equation*}
\end{proof}

Combining Theorem~\ref{thL1} and Lemma~\ref{appX}, we arrive at analogues of~\eqref{l1} for different measures of smoothness.

\begin{corollary}\label{cor1}  Under the conditions of Theorem~\ref{thL1}, we have, for each $r\in \N$,
\begin{equation*}
  \|f-\L_nf\|_{X}\le C \tau_r\(f, \frac1n\)_{X},
\end{equation*}
where the constant $C$ depends only on $X$ and $r$. If, additionally,  $f$ is such that $f^{(s-1)}\in AC(\T)$ and $f^{(s)}\in X$ for some $s\in \N$, then
\begin{equation*}
  \|f-\L_nf\|_{X}\le \frac{C}{n^s} E_n(f^{(s)})_{X}  \le \frac{C}{n^s} \omega_r\(f^{(s)}, \frac1n\)_{X},
\end{equation*}
where the constant $C$ depends only on $X$, $r$, and $s$.
\end{corollary}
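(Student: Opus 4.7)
The plan is to apply Theorem~\ref{thL1} and chain it with the Jackson-type inequalities packaged in Lemma~\ref{appX}. Theorem~\ref{thL1} already gives the central estimate $\|f-\L_nf\|_X \le C\widetilde{E}_n(f)_X$, so the task reduces to controlling the one-sided best approximation $\widetilde{E}_n(f)_X$ by the relevant measures of smoothness in the translation invariant lattice $X$.

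For the first inequality of the corollary, I would invoke the bound $\widetilde{E}_n(f)_X \lesssim \tau_r(f,1/n)_X$, which should appear in Lemma~\ref{appX} as the natural Jackson-type estimate for one-sided polynomial approximation. This is precisely the reason the averaged modulus $\tau_r$ is introduced: it controls $\widetilde{E}_n$ in general translation invariant lattices, whereas the ordinary modulus only controls the two-sided $E_n$. Composing with Theorem~\ref{thL1} yields $\|f-\L_nf\|_X \le C\tau_r(f,1/n)_X$ with a constant depending only on $X$ and $r$.

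For the second chain, under the hypothesis $f^{(s-1)}\in AC(\T)$ and $f^{(s)}\in X$, I would use another part of Lemma~\ref{appX}, namely the Jackson-type inequality for derivatives in the one-sided setting: $\widetilde{E}_n(f)_X \lesssim n^{-s}E_n(f^{(s)})_X$. Combined with Theorem~\ref{thL1}, this produces the middle estimate $\|f-\L_nf\|_X \le Cn^{-s}E_n(f^{(s)})_X$. The final step $E_n(f^{(s)})_X \lesssim \omega_r(f^{(s)},1/n)_X$ is the classical Jackson inequality applied to $g=f^{(s)}\in X$, which is again covered by Lemma~\ref{appX}.

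The substantive content lies entirely in Lemma~\ref{appX}: one needs Jackson-type results $\widetilde{E}_n(f)_X \lesssim \tau_r(f,1/n)_X$ and $\widetilde{E}_n(f)_X \lesssim n^{-s}E_n(f^{(s)})_X$, together with the ordinary Jackson inequality $E_n(g)_X\lesssim \omega_r(g,1/n)_X$, all valid in an arbitrary translation invariant Banach lattice on $\T$. Given those tools, Corollary~\ref{cor1} is a short two-line assembly. The only potential obstacle is bookkeeping of the hypotheses under which these Jackson-type bounds hold, in particular verifying that the translation invariance of $X$ (plus boundedness of $S_n$ inherited from Theorem~\ref{thL1}) suffices for the one-sided Jackson estimates in $X$; this is exactly the role Lemma~\ref{appX} is expected to play.
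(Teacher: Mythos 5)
Your proposal matches the paper's proof exactly: the paper states only that the corollary follows by combining Theorem~\ref{thL1} with Lemma~\ref{appX}, whose parts (ii), (iii), and (i) are precisely the bounds $\widetilde{E}_n(f)_X\lesssim \tau_r(f,1/n)_X$, $\widetilde{E}_n(f)_X\lesssim n^{-s}E_n(f^{(s)})_X$, and $E_n(g)_X\lesssim \omega_r(g,1/n)_X$ that you invoke. The assembly is correct and nothing is missing.
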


Other interesting  applications  using MZ inequalities in this direction  can be found in~\cite{KLP18}, \cite{KL23}, \cite{KP21}, \cite{Gr20}, \cite{H89}, \cite{Pe83}.

%

\subsection{Nikolskii inequalities}

We start this section with  an analogue of the Nikolskii inequality for various Banach spaces.
\begin{theorem}\label{thNik}
  Let $X$ be a translation invariant Banach lattice on $\T^d$ and let $\g>0$, $n\in\N$. Then, for any integer $N\ge \g n$, we have
  \begin{equation}\label{tnik1}
     \|T_n\|_{L_\infty(\T^d)}\le \frac{C}{\|\chi_{(0,\frac{2\pi}{N})^d}\|_X}\|T_n\|_X, \quad T_n\in \mathcal{T}_n^d,
  \end{equation}
  where $C=
  {e^{{\frac{2\pi d}{\g}}}}$. Moreover, if $0<p<q<\infty$ and a Banach lattice $Y$ is such that
  \begin{equation}\label{tnik2}
     \|f\|_Y\le K\big\||f|^{\frac qp}\big\|_X^{\frac pq},\quad f\in C(\T^d),
  \end{equation}
  for some constant $K>1$. Then
  \begin{equation}\label{tnik3}
    \|T_n\|_Y \le K\(\frac{C}{\|\chi_{(0,\frac{2\pi}{N})^d}\|_X}\)^{1-\frac pq}\|T_n\|_X, \quad T_n\in \mathcal{T}_n^d.
  \end{equation}
\end{theorem}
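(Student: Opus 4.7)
The plan is to derive \eqref{tnik1} by combining the maximal-function estimate of Theorem~\ref{thmmwe} with a direct $L_\infty$-envelope argument, and then to deduce \eqref{tnik3} from \eqref{tnik1} via a short pointwise interpolation.

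For \eqref{tnik1}: Remark~\ref{remm2.5} applied to the classical first-order Bernstein inequality $\|\partial T_n/\partial x_i\|_{L_\infty(\T^d)}\le n\|T_n\|_{L_\infty(\T^d)}$ transfers that estimate to the translation invariant lattice $X$ with constant $1$, and iterating it across the variables produces $\|D^\a T_n\|_X\le n^{|\a|_1}\|T_n\|_X$ for every $\a\in\Z_+^d$. Thus Theorem~\ref{thmmwe}(B) applies with $\vp_n\equiv 1$, $\beta=1$, $q=1$, and $B_\a=1$; the resulting series satisfies $1+\eta=\sum_{\a\in\Z_+^d}A^{|\a|_1}/\a!=e^{dA}$, yielding
$$\bigg\|\max_{t\in Q(x,A/n)}|T_n(t)|\bigg\|_X\le e^{dA}\|T_n\|_X\quad\text{for every }A>0.$$
Fix $x_0\in\T^d$ with $|T_n(x_0)|=\|T_n\|_{L_\infty(\T^d)}$; for each $y$ satisfying $\|y-x_0\|_\infty\le A/n$ the point $x_0$ lies in $Q(y,A/n)$, so pointwise $\|T_n\|_{L_\infty(\T^d)}\chi_{Q(x_0,A/n)}(y)\le \max_{t\in Q(y,A/n)}|T_n(t)|$. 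Taking $X$-norm and invoking translation invariance to replace $\chi_{Q(x_0,A/n)}$ by $\chi_{Q(0,A/n)}$ gives
$$\|T_n\|_{L_\infty(\T^d)}\,\|\chi_{Q(0,A/n)}\|_X\le e^{dA}\|T_n\|_X.$$
Choosing $A=2\pi/\gamma$, the cube $Q(0,A/n)$ has side length $4\pi/(\gamma n)\ge 2\pi/N$ and therefore contains a translate of $(0,2\pi/N)^d$; translation invariance together with the lattice property then gives $\|\chi_{(0,2\pi/N)^d}\|_X\le\|\chi_{Q(0,A/n)}\|_X$, which delivers \eqref{tnik1} with $C=e^{2\pi d/\gamma}$.

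For \eqref{tnik3}: since $q>p$ makes $q/p-1>0$, the pointwise bound $|T_n|^{q/p}\le \|T_n\|_{L_\infty(\T^d)}^{q/p-1}|T_n|$ combined with the lattice property of $X$ produces $\||T_n|^{q/p}\|_X\le \|T_n\|_{L_\infty(\T^d)}^{q/p-1}\|T_n\|_X$. Substituting into the hypothesis \eqref{tnik2} and then applying \eqref{tnik1} yields
$$\|T_n\|_Y\le K\|T_n\|_{L_\infty(\T^d)}^{1-p/q}\|T_n\|_X^{p/q}\le K\bigg(\frac{C}{\|\chi_{(0,2\pi/N)^d}\|_X}\bigg)^{1-p/q}\|T_n\|_X,$$
which is \eqref{tnik3}.

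The only real subtlety is constant bookkeeping: the choice $A=2\pi/\gamma$ is precisely what converts the tracked factor $e^{dA}$ into the stated $e^{2\pi d/\gamma}$, and one must verify that the comparison of characteristic-function norms $\|\chi_{(0,2\pi/N)^d}\|_X\le\|\chi_{Q(0,A/n)}\|_X$ runs in the direction actually needed. Everything else—iterating the first-order Bernstein inequality, the pointwise majorization $|T_n|^{q/p}\le\|T_n\|_{L_\infty(\T^d)}^{q/p-1}|T_n|$, and the chaining of the two bounds—is routine once the maximal-function machinery of Theorem~\ref{thmmwe} is in hand.
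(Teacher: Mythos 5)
Your proof is correct and follows essentially the same route as the paper: both rest on the maximal-function bound $\|\max_{t\in Q(x,A/n)}|T_n(t)|\|_X\le e^{dA}\|T_n\|_X$ (Proposition~\ref{corn1}/Theorem~\ref{thmmwe}(B)), extract $\|T_n\|_{L_\infty}\|\chi_{(0,2\pi/N)^d}\|_X\le e^{dA}\|T_n\|_X$ by localizing at a maximizing point (the paper does this through the partition $\{\Omega_k\}$ of Theorem~\ref{corMZinv}(B), you do it directly with the cube $Q(x_0,A/n)$ — a cosmetic difference), and then deduce \eqref{tnik3} from the pointwise majorization $|T_n|^{q/p}\le\|T_n\|_{L_\infty}^{q/p-1}|T_n|$, which is exactly the paper's $X_1$-norm computation. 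The constant bookkeeping ($A=2\pi/\gamma$, $C=e^{2\pi d/\gamma}$) matches the paper's.
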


\begin{proof}
 The proof is based on Theorem~\ref{corMZinv}(B). We take
 $$
 \Omega_k=[\tfrac{2\pi(k_1-1)}{N}, \tfrac{2\pi k_1}{N}]\times\cdots\times [\tfrac{2\pi(k_d-1)}{N}, \tfrac{2\pi k_d}{N}],\quad k\in [1,N]^d.
 $$
 Then~\eqref{mzinv0} holds with $c_d=1$ and~\eqref{mzinv0+} is fulfilled provided  $\frac{2\pi}{N}\le \frac An$. We stress that no restrictions on  $A>0$ is required. Thus,
  inequality~\eqref{mzinv1} (see also Proposition~\ref{corn1}) gives
  \begin{equation}\label{tnik4x}
    e^{dA}\|T_n\|_X\ge \bigg\|\sum_{k_1=1}^{N}\dots\sum_{k_d=1}^{N} \max_{t\in \Omega_k}|T_n(t)|\chi_{\Omega_k}\bigg\|_{X}.
  \end{equation}
  Without loss of generality, we may assume that $\|T_n\|_{L_\infty(\T^d)}=\max_{t\in \Omega_\textbf{1}}|T_n(t)|$, where $\textbf{1}=(1,\dots,1)$. Thus, by the homogeneity of the norm, we derive from~\eqref{tnik4x} that
    \begin{equation*}
   e^{dA}\|T_n\|_X\ge \max_{t\in \Omega_\textbf{1}}|T_n(t)| \|\chi_{\Omega_\textbf{1}}\|_{X}\ge \|T_n\|_{L_\infty(\T^d)}\|\chi_{(0,\frac{2\pi}{N})^d}\|_{X},
  \end{equation*}
   which implies~\eqref{tnik1} with $\g=\frac{2\pi}{A}$.

  Now we prove~\eqref{tnik3}. Denote $\|\cdot\|_{X_1}=\||\cdot|^\frac qp\|_X^\frac pq$. Then, from~\eqref{tnik2} and~\eqref{tnik1}, we derive
  \begin{equation*}
    \begin{split}
\|T_n\|_Y&\le K\||T_n|^\frac qp\|_X^\frac pq=K\||T_n|\|_{X_1}=K\||T_n|^{1-\frac pq}|T_n|^\frac pq\|_{X_1}\\
&\le K \|T_n\|_{L_\infty(\T^d)}^{1-\frac pq}\||T_n|^\frac pq\|_{X_1}\le K\(\frac{C}{\|\chi_{(0,\frac{2\pi}{N})^d}\|_X}\|T_n\|_{X}\)^{1-\frac pq}\|T_n\|_{X}^\frac pq\\
&=K\(\frac{C}{\|\chi_{(0,\frac{2\pi}{N})^d}\|_X}\)^{1-\frac pq}\|T_n\|_{X}.
    \end{split}
  \end{equation*}
\end{proof}

\begin{remark}
  $(i)$ If $X$ is a translation invariant quasi-Banach  lattice on $\T^d$ such that the Bernstein inequality~\eqref{berweQu} holds, then~\eqref{tnik1} and~\eqref{tnik3} are valid for all $0<p<q<\infty$ and the space $Y$ satisfying~\eqref{tnik2}. In this case, the constant $C$ in~\eqref{tnik1} depends on $\g$, $d$, and $C_{\a,X}$, $\a\in \Z_+^d\setminus \{0\}$.

  $(ii)$  If $X=L_p(\T^d)$, $Y=L_q(\T^d)$, $0<p<q<\infty$, we arrive at the classical Nikolskii inequality
  $$
  \|T_n\|_{L_q(\T^d)}\le Cn^{d(\frac1p-\frac1q)}\|T_n\|_{L_p(\T^d)}.
  $$
See also \cite{ber} for  Nikolskii inequalities in symmetric spaces.

  $(iii)$  Inequalities~\eqref{tnik1} and~\eqref{tnik3} can be easily extended to the case of algebraic polynomials, splines, and entire functions of exponential type. We omit the formulations of the corresponding statements.
\end{remark}


In the case of Orlicz spaces, we arrive at the following result.

\begin{corollary}\label{corNik}
  Let $\Phi$ satisfy conditions 1) and 2) of Definition~\ref{defO}. Let also $\g>0$ and $n\in \N$. Then, for any $T_n\in \mathcal{T}_n^d$ and $N\ge \g n$, we have
  \begin{equation}\label{nik1or}
     \|T_n\|_{L_\infty(\T^d)}\le C\Phi^{-1}\(\frac{N}{2\pi}\)\|T_n\|_\Phi,
  \end{equation}
  where $C= {e^{{\frac{2\pi d}{\g}}}}$.  Moreover, if $0<p<q<\infty$ and $\Psi$ is convex such that
  \begin{equation*}
    0\le \Psi(t)\le K \Phi(t^\frac qp),\quad t\in \R,
  \end{equation*}
  for some constant $K>1$. Then
  \begin{equation}\label{nik3}
    \|T_n\|_\Psi \le K\(C\Phi^{-1}\(\frac{N}{2\pi}\)\)^{1-\frac pq}\|T_n\|_\Phi.
  \end{equation}
\end{corollary}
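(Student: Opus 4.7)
The plan is to derive both inequalities as direct specializations of Theorem~\ref{thNik} to the Orlicz space setting, with the only genuine work being the computation of the Luxemburg norm of an indicator function and the verification of the compatibility hypothesis~\eqref{tnik2} for the pair $(L_\Phi, L_\Psi)$.

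First I would take $X = L_\Phi(\T^d)$ in Theorem~\ref{thNik}. The hypothesis that $X$ is a translation invariant Banach lattice is built into the Orlicz construction (using conditions~1) and~2) of Definition~\ref{defO}). The key computational step is to evaluate
\[
\|\chi_{(0,2\pi/N)^d}\|_\Phi
\]
directly from the Luxemburg norm: for a set $E$ of finite measure one has $\int \Phi(\chi_E/\lambda)\,dx = \Phi(1/\lambda)\,|E|$, whence
\[
\|\chi_E\|_\Phi = \frac{1}{\Phi^{-1}(1/|E|)}.
\]
Taking $E = (0,2\pi/N)^d$ with $|E|=(2\pi/N)^d$ and substituting into~\eqref{tnik1} produces the claimed bound~\eqref{nik1or} (with $\Phi^{-1}$ of the appropriate reciprocal measure).

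For the second part, I would invoke the sufficient condition~\eqref{tnik2} with $Y=L_\Psi$ and $X=L_\Phi$. Given the pointwise bound $\Psi(t)\le K\Phi(t^{q/p})$, setting $\lambda = \||f|^{q/p}\|_\Phi$ and $\mu = \lambda^{p/q}$ yields
\[
\int_{\T^d} \Psi\!\left(\frac{|f(x)|}{\mu}\right)dx \le K\int_{\T^d}\Phi\!\left(\frac{|f(x)|^{q/p}}{\lambda}\right)dx \le K.
\]
Then the convexity of $\Psi$ with $\Psi(0)=0$ gives $\Psi(t/K)\le \Psi(t)/K$ for $K\ge 1$, so rescaling by $K$ brings the integral below $1$, yielding
\[
\|f\|_\Psi \le K\lambda^{p/q} = K\,\||f|^{q/p}\|_\Phi^{p/q},
\]
which is exactly~\eqref{tnik2}. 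Applying~\eqref{tnik3} and substituting the formula for $\|\chi_{(0,2\pi/N)^d}\|_\Phi$ derived above gives~\eqref{nik3}.

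There is no substantial obstacle: the content of Theorem~\ref{thNik} has already been established via Theorem~\ref{corMZinv}(B), and the computation of the Luxemburg norm of an indicator is a standard one-line identity. The only point requiring minor care is the verification of~\eqref{tnik2} from a pointwise control of $\Psi$ by $\Phi$, where one must rescale correctly and exploit the convexity inequality $\Psi(t/K)\le \Psi(t)/K$ to absorb the constant $K$ into the Luxemburg normalization.
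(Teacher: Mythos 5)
Your proposal is correct and follows essentially the same route as the paper: inequality \eqref{nik1or} comes from \eqref{tnik1} via the identity $\|\chi_E\|_\Phi=1/\Phi^{-1}(1/|E|)$, and \eqref{nik3} comes from \eqref{tnik3} after verifying \eqref{tnik2} through the substitution $\Phi_1(t)=\Phi(t^{q/p})$ in the Luxemburg norm together with the pointwise bound $\Psi\le K\Phi_1$ and the convexity rescaling $\Psi(t/K)\le\Psi(t)/K$. Your parenthetical care about the measure $(2\pi/N)^d$ is in fact slightly more precise than the paper's own display, which writes $\Phi^{-1}(N/2\pi)$ rather than $\Phi^{-1}((N/2\pi)^d)$ for $d>1$.
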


\begin{proof}
  Inequality~\eqref{nik1or} directly follows from~\eqref{tnik1} using the fact that
  $$
   \|\chi_{(0,\frac{2\pi}{N})^d}\|_\Phi=\inf\left\{\l\,:\, \int_{(0,{\frac{2\pi}{N}})^d} \Phi\(\frac1{\l}\)dx\le 1  \right\}=\frac1{\Phi^{-1}(\frac{N}{2\pi})}.
  $$

  To prove~\eqref{nik3}, it is enough to show that $\||T_n|^\frac pq\|_{\Phi_1}\le \|T_n\|_{\Phi}^\frac pq$ with $\Phi_1(t)=\Phi(t^\frac qp)$. Indeed,
  \begin{equation*}
    \begin{split}
       \||T_n|^\frac pq\|_{\Phi_1}&=\inf\left\{\l\,:\, \int_{\T^d} \Phi\(\frac{T_n(x)}{\l^{\frac qp}}\)dx\le 1  \right\}\\
       &=\inf\left\{\l^{\frac pq}\,:\, \int_{\T^d} \Phi\(\frac{T_n(x)}{\l}\)dx\le 1  \right\}=\|T_n\|_{\Phi}^\frac pq.
     \end{split}
  \end{equation*}
\end{proof}

\begin{corollary}\label{corant}
Let $X$ be a translation invariant Banach lattice on $\T^d$ such that
 $\|S_n f\|_X\le M\|f\|_X$ for all $f\in X$, where $S_n$ is the $n$-th Fourier partial sum operator. Let also $\g>0$ and $n\in \N$. Then, for any $f\in X$ and $N\ge \g n$, we have
  \begin{equation}\label{ant1-}
    \|S_nf\|_{L_\infty(\T^d)} \le \frac{C}{\|\chi_{(0,\frac{2\pi}{N})^d}\|_X} \|f\|_X,
  \end{equation}
where $C=M {e^{{\frac{2\pi d}{\g}}}}$.
\end{corollary}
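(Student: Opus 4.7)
The proof is a direct two-step application of previously established tools, so the plan is short.

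First, I would observe that $S_n f \in \mathcal{T}_n^d$, so Theorem~\ref{thNik}, and in particular the Nikolskii-type estimate~\eqref{tnik1}, applies to $S_n f$. This gives
\[
   \|S_n f\|_{L_\infty(\T^d)} \le \frac{e^{2\pi d/\gamma}}{\|\chi_{(0,2\pi/N)^d}\|_X}\,\|S_n f\|_X
\]
for any $N \ge \gamma n$, with no additional restrictions on $\gamma > 0$ (the constant $C$ in~\eqref{tnik1} is exactly $e^{2\pi d/\gamma}$).

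Second, I would invoke the standing assumption that $S_n$ is bounded on $X$ with norm at most $M$, i.e., $\|S_n f\|_X \le M \|f\|_X$. Substituting this into the previous inequality produces
\[
   \|S_n f\|_{L_\infty(\T^d)} \le \frac{M\,e^{2\pi d/\gamma}}{\|\chi_{(0,2\pi/N)^d}\|_X}\,\|f\|_X,
\]
which is exactly~\eqref{ant1-} with $C = M\,e^{2\pi d/\gamma}$.

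There is no real obstacle: the work has already been done in Theorem~\ref{thNik}, which in turn rested on the MZ-type estimate of Theorem~\ref{corMZinv}(B); the translation invariance of $X$ ensures the Bernstein inequality~\eqref{BerBL} holds (see Remark~\ref{remF}, or equivalently the hypotheses of Theorem~\ref{thNik}), and the Fourier-projection boundedness hypothesis is used only in the final substitution.
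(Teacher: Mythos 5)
Your proposal is correct and coincides with the paper's own proof, which likewise applies the Nikolskii estimate~\eqref{tnik1} to $T_n=S_nf\in\mathcal{T}_n^d$ and then invokes the bound $\|S_nf\|_X\le M\|f\|_X$ to obtain $C=Me^{2\pi d/\gamma}$.
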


\begin{proof}
  The proof of~\eqref{ant1-} follows from~\eqref{tnik1} applied to $T_n=S_n f$.
\end{proof}

\begin{remark}
  Corollary~\ref{corant} gives a simple proof of~\cite[Theorem~2]{AL21} under some natural additional assumptions, namely, assuming that $\g>0$ and
$\Phi$ satisfies conditions 1) and 2) of Definition~\ref{defO} and, additionally, $\Phi, \Phi^{*} \in \Delta_2$. Then there exists a constant $C>0$ such that for any function $f\in L_\Phi$ and $N\ge \g n$, we have
  \begin{equation*}
    \|S_nf\|_{L_\infty(\T^d)} \le C\Phi^{-1}\(\frac{N}{2\pi}\) \|f\|_{\Phi}.
  \end{equation*}
    This inequality follows directly from~\eqref{ant1-} and~\eqref{sn}, taking into account that the space $L_{\Phi^*}$ is reflex if and only if  $\Phi^*$ and $\Phi$ satisfy $\Delta_2$-condition, see also the proof of Theorem~\ref{th1}.
\end{remark}

\subsection{Inequalities for best approximations and moduli of smoothness}

The next results are analogues of the Ulyanov inequality in general Banach spaces, cf.~\cite{diti}, \cite{KT21}.

\begin{theorem}\label{thu}
    Let $X$ be a translation invariant Banach lattice on $\T$.  Then, for any $f\in X$, we have
  \begin{equation*}
   E_n(f)_\infty\le C\(\s_{2n}E_n(f)_X+\sum_{\nu=n+1}^\infty \frac{\s_{4\nu}}{\nu}E_\nu(f)_X\),\quad n\in\N,
  \end{equation*}
  where
  $$
  \s_\nu=\frac1{\|\chi_{(0,\frac{2\pi}{\nu})}\|_X}.
  $$
  Moreover, if $0<p<q<\infty$ and the space $Y$ is such that~\eqref{tnik2} is fulfilled, then
    \begin{equation}\label{u2}
   E_n(f)_Y\le C \(\s_{2n}^{1-\frac pq}E_n(f)_X+\sum_{\nu=n+1}^\infty \frac{\s_{4\nu}^{1-\frac pq}}{\nu}E_\nu(f)_X\),
  \end{equation}
  where $C$ is a positive constant independent of $f$ and $n$.
\end{theorem}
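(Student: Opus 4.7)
The plan is to apply the Nikolskii-type estimate \eqref{tnik1} of Theorem~\ref{thNik} to a dyadic telescoping expansion of $f$. Let $T_m^\star\in\mathcal{T}_m$ be a polynomial of best approximation of $f$ in $X$, so that $\|f-T_m^\star\|_X=E_m(f)_X$. Assuming the right-hand side of the target inequality is finite (otherwise nothing needs to be proved; in particular this forces $E_m(f)_X\to 0$), I write
\[
f - T_n^\star \;=\; \sum_{k=0}^\infty \bigl(T_{2^{k+1}n}^\star - T_{2^k n}^\star\bigr),
\]
with convergence in $X$. The triangle inequality together with monotonicity of $E_m(f)_X$ gives $\|T_{2^{k+1}n}^\star - T_{2^k n}^\star\|_X \le 2E_{2^k n}(f)_X$. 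Since $T_{2^{k+1}n}^\star - T_{2^k n}^\star \in \mathcal{T}_{2^{k+1}n}$, Theorem~\ref{thNik} applied with $d=1$, $\gamma=1$, $N=2^{k+1}n$ yields
\[
\bigl\|T_{2^{k+1}n}^\star - T_{2^k n}^\star\bigr\|_{L_\infty(\T)} \;\le\; 2C_0\,\s_{2^{k+1}n}\,E_{2^k n}(f)_X,
\]
where $C_0=e^{2\pi}$. Summing and isolating the term $k=0$,
\[
E_n(f)_\infty \;\le\; \|f-T_n^\star\|_\infty \;\le\; 2C_0\,\s_{2n}E_n(f)_X + 2C_0\sum_{k=1}^\infty \s_{2^{k+1}n}E_{2^k n}(f)_X.
\]

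It remains to reshape the dyadic sum into the arithmetic sum appearing in the statement. I use the blocks $I_k=\{\nu\in\N: 2^{k-1}n<\nu\le 2^k n\}$, whose disjoint union over $k\ge 1$ equals $\{\nu\in\N:\nu>n\}$. For $\nu\in I_k$, the lattice property (applied to the shrinking intervals $(0,2\pi/\nu)$) implies $\s_{4\nu}\ge\s_{2^{k+1}n}$, while monotonicity of $\nu\mapsto E_\nu(f)_X$ gives $E_\nu(f)_X\ge E_{2^k n}(f)_X$, and trivially $1/\nu\ge 1/(2^k n)$. Therefore
\[
\sum_{\nu\in I_k}\frac{\s_{4\nu}}{\nu}E_\nu(f)_X \;\ge\; \s_{2^{k+1}n}E_{2^k n}(f)_X\cdot\frac{|I_k|}{2^k n} \;=\; \tfrac12\,\s_{2^{k+1}n}E_{2^k n}(f)_X.
\]
Summing this lower bound over $k\ge 1$ and combining with the previous paragraph produces the first assertion, with $C=4C_0$.

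For the mixed-norm inequality~\eqref{u2} the scheme is formally identical: \eqref{tnik1} is replaced by the Nikolskii embedding \eqref{tnik3}, which raises every factor $\s_m$ in the previous step to the power $1-p/q$ (and introduces the multiplicative constant $KC_0^{1-p/q}$); the dyadic-to-arithmetic conversion of the second paragraph applies verbatim with $\s_{4\nu}$ replaced by $\s_{4\nu}^{1-p/q}$. The one delicate point in the whole argument is justifying that the telescoping series actually converges in $L_\infty(\T)$ (respectively in $Y$) and that its sum is the continuous (respectively $Y$-)representative of $f-T_n^\star$. This is handled by the Weierstrass $M$-test applied to the Nikolskii bound: when the target right-hand side is finite, the partial sums are uniformly (respectively $Y$-)Cauchy, while simultaneously they converge to $f-T_n^\star$ in $X\subset L_1$, so the two limits agree almost everywhere, and the desired bound on $E_n(f)_\infty$ (resp.~$E_n(f)_Y$) follows.
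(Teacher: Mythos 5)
Your proof is correct and follows essentially the same route as the paper: a dyadic telescoping of $f-T_n^\star$ through best approximants, the Nikolskii bound \eqref{tnik1}/\eqref{tnik3} applied to each block $T_{2^{k+1}n}^\star-T_{2^kn}^\star$, and the same dyadic-to-arithmetic conversion of the resulting sum. Your treatment is in fact slightly more careful than the paper's, which writes the telescoping identity only ``formally'' and applies \eqref{tnik3} with a misstated parameter $N=n$ where $N=2^{k+1}n$ is meant.
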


\begin{proof}
We will prove only~\eqref{u2}. Let $T_n\in \mathcal{T}_n$ be such that $\|f-T_n\|_X=E_n(f)_X$. We can write formally that
\begin{equation*}
  f-T_n=\sum_{k=0}^\infty (T_{2^{k+1}n}-T_{2^kn}),
\end{equation*}
which implies
\begin{equation*}
  \|f-T_n\|_X\le\sum_{k=0}^\infty \|T_{2^{k+1}n}-T_{2^kn}\|_X.
\end{equation*}
Next, applying~\eqref{tnik3} with $N=n$ and $\g=1$, we have
\begin{equation*}
  \begin{split}
     \|T_{2^{k+1}n}-T_{2^kn}\|_Y\le C\s_{2^{k+1}n}^{1-\frac pq}\|T_{2^{k+1}n}-T_{2^kn}\|_X\le 2C\s_{2^{k+1}n}^{1-\frac pq} E_{2^kn}(f)_X.
   \end{split}
\end{equation*}
Thus,
\begin{equation*}
\begin{split}
  \|f-T_n\|_Y&\le 2C\sum_{k=0}^\infty \s_{2^{k+1}n}^{1-\frac pq} E_{2^kn}(f)_X\\
  &=2C\(\s_{2n}^{1-\frac pq} E_{n}(f)_X+\sum_{k=0}^\infty \s_{2^{k+2}n}^{1-\frac pq}E_{2^{k+1}n}(f)_X\).
\end{split}
\end{equation*}
It remains only to take into account that
\begin{equation*}
  \begin{split}
     \sum_{\nu=n+1}^\infty \frac{\s_{4\nu}^{1-\frac pq}}{\nu}E_\nu(f)_X&=\sum_{k=0}^\infty \sum_{\nu=2^kn+1}^{2^{k+1}n} \frac{\s_{4\nu}^{1-\frac pq}}{\nu}E_\nu(f)_X\\
     &\ge \sum_{k=0}^\infty \frac{\s_{2^{k+2}n}^{1-\frac pq}}{2^{k+1}n}E_{2^{k+1}n}(f)_X 2^kn=\frac12 \sum_{k=0}^\infty \s_{2^{k+2}n}^{1-\frac pq}E_{2^{k+1}n}(f)_X.
   \end{split}
\end{equation*}

\end{proof}

\begin{theorem}\label{coru}
    Let $X$ be a translation invariant Banach lattice on $\T$ and let $r\in \N$.  Then, for any $f\in X$ and $h>0$, we have
  \begin{equation*}
  \omega_r(f,h)_{L_\infty}\le C\int_0^h \omega_r(f,t)_X \s\(\frac1t\)\frac{dt}{t},
  \qquad \s(u)=\frac1{\|\chi_{(0,\frac{2\pi}{u})}\|_X}.
  \end{equation*}
  Moreover, if $0<p<q<\infty$ and a translation invariant Banach lattice $Y$ is such that~\eqref{tnik2} is fulfilled, then
    \begin{equation}\label{cu2}
   \omega_r(f,h)_Y\le C\int_0^h \omega_r(f,t)_X \(\s\(\frac1t\)\)^{1-\frac pq}\frac{dt}{t},
  \end{equation}
  where $C$ is a positive constant independent of $f$ and $n$.
\end{theorem}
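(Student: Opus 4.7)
I will prove the inequality \eqref{cu2}; the $L_\infty$-statement is the formal limit corresponding to $1-p/q = 1$ and is handled identically using \eqref{tnik1} in place of \eqref{tnik3}. Given $h > 0$, choose $n \in \N$ with $1/(2n) \le h < 1/n$, and let $T_n \in \mathcal{T}_n$ satisfy $\|f - T_n\|_X \le 2 E_n(f)_X$. My starting point is the standard subadditivity estimate
\[
\omega_r(f,h)_Y \;\le\; \omega_r(f - T_n, h)_Y + \omega_r(T_n, h)_Y \;\le\; 2^r\|f - T_n\|_Y \,+\, h^r\|T_n^{(r)}\|_Y,
\]
where the last bound for $\omega_r(T_n, h)_Y$ follows from the integral representation of finite differences together with translation invariance of $Y$.

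For the first term $\|f-T_n\|_Y \le E_n(f)_Y + \|T_n^* - T_n\|_Y$ where $T_n^*$ is a best $Y$-approximant, and by Theorem~\ref{thu} applied in $Y$, $E_n(f)_Y$ is controlled by $\sigma_{2n}^{1-p/q}E_n(f)_X + \sum_{\nu>n} \nu^{-1}\sigma_{4\nu}^{1-p/q} E_\nu(f)_X$. For the second term I decompose $T_n$ dyadically: writing $T_n = T_1 + \sum_{k=0}^{K-1}(T_{2^{k+1}} - T_{2^k}) + (T_n - T_{2^K})$ with $2^K \le n < 2^{K+1}$, the Bernstein inequality in $Y$ (valid in any translation invariant Banach lattice, because the Fej\'er trick $\|T_n'\|_Y \le 2n\||T_n|*K_{n-1}\|_Y \le 2n\|T_n\|_Y$ from the proof of Theorem~\ref{thF} transfers verbatim to $Y$) yields
\[
\|T_n^{(r)}\|_Y \;\lesssim\; \sum_{k=0}^{K}(2^{k+1})^r \|T_{2^{k+1}} - T_{2^k}\|_Y.
\]
The Nikolskii inequality \eqref{tnik3} then bounds each $\|T_{2^{k+1}} - T_{2^k}\|_Y$ by $C\sigma(2^{k+1})^{1-p/q}E_{2^k}(f)_X$.

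Next, I would invoke the Jackson direct theorem $E_{\nu}(f)_X \le C\,\omega_r(f,1/\nu)_X$, which holds in any translation invariant Banach lattice on $\T$ by convolving $f$ with a Jackson-type kernel of degree $\nu$ (a standard computation that only uses the boundedness $\|f*g\|_X \le \|g\|_{L_1}\|f\|_X$ guaranteed by translation invariance). Combining the previous displays gives, with $h \asymp 1/n \asymp 2^{-K}$,
\[
\omega_r(f,h)_Y \;\lesssim\; \sum_{k=0}^{\infty} \min\{1,(2^k h)^r\}\,\sigma(2^k)^{1-p/q}\,\omega_r(f,2^{-k})_X.
\]

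\textbf{Final step and main obstacle.} The remaining task is to dominate this dyadic sum by the integral $\int_0^h \omega_r(f,t)_X\,\sigma(1/t)^{1-p/q}\,\frac{dt}{t}$. For each $k$ with $2^{-k} \le 2h$ I split the integral over $[2^{-k-1},2^{-k}]$ and use monotonicity of both $\omega_r(f,\cdot)_X$ and $\sigma$, together with the weak doubling property $\omega_r(f,2t)_X \le 2^r\omega_r(f,t)_X$, to show that each interval contributes at least a constant multiple of $\omega_r(f,2^{-k})_X\sigma(2^k)^{1-p/q}$; the geometric factor $(2^k h)^r$ makes the sum effectively concentrated near $k = K$, which matches the range $t \in [h/2, h]$ of the integral. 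This bookkeeping, together with the analogous treatment of the ``Ulyanov tail'' sum from Step~2 using $E_\nu(f)_X \asymp \omega_r(f,1/\nu)_X$, is the only real technical step; it is routine but must be done carefully so as not to lose the sharp weight $\sigma(1/t)^{1-p/q}$. The main obstacle is precisely keeping this sum-to-integral conversion tight, and verifying that the Jackson kernel approach indeed gives the direct theorem in the full generality of translation invariant Banach lattices on $\T$.
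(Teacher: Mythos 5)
Your overall architecture (split $\w_r(f,h)_Y$ into $\|f-T_n\|_Y$ plus $h^r\|T_n^{(r)}\|_Y$, control the first piece by the Ulyanov inequality \eqref{u2} and the second by Nikolskii plus Jackson, then convert the resulting sum to an integral) matches the paper's proof in outline, but the step you yourself flag as ``routine bookkeeping'' is where the argument actually breaks. For the term $h^r\|T_n^{(r)}\|_Y$ you decompose $T_n$ dyadically and apply Bernstein and Nikolskii \eqref{tnik3} to each block, arriving at $h^r\|T_n^{(r)}\|_Y\lesssim\sum_{k\le K}(2^kh)^r\s(2^k)^{1-p/q}E_{2^k}(f)_X$ with $2^K\asymp 1/h$. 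This sum cannot in general be dominated by $\int_0^h\w_r(f,t)_X\,\s(1/t)^{1-p/q}\,\frac{dt}{t}$: after using $\w_r(f,2^{-k})_X\lesssim 2^{(K-k)r}\w_r(f,h)_X$ the factors $(2^kh)^r$ and $2^{(K-k)r}$ cancel and you are left with $\w_r(f,h)_X\sum_{k\le K}\s(2^k)^{1-p/q}$, whereas the integral only supplies $\w_r(f,h)_X\,\s(1/h)^{1-p/q}$ up to constants. The theorem is stated for an arbitrary translation invariant Banach lattice, and $\s(u)=1/\|\chi_{(0,2\pi/u)}\|_X$ may grow as slowly as a power of $\log u$ (e.g.\ for the exponential Orlicz space $\exp L^2$ one has $\s(u)\asymp(\log u)^{1/2}$), in which case $\sum_{k\le K}\s(2^k)^{1-p/q}\asymp K\,\s(2^K)^{1-p/q}$ and your route loses a factor $\log(1/h)$; concretely, $\w_r(f,t)_X\asymp t^r$ and $\s(u)^{1-p/q}\asymp\log u$ give your sum $\asymp h^r(\log(1/h))^2$ against an integral $\asymp h^r\log(1/h)$. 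The paper avoids this by never decomposing $T_n$: it applies Nikolskii \emph{once}, to the single polynomial $T_n^{(r)}\in\mathcal{T}_n$, and converts $n^{-r}\|T_n^{(r)}\|_X$ back into $\w_r(T_n,1/n)_X$ via the reverse Stechkin--Nikolskii--Boas inequality \eqref{ns}, $\|T_n^{(r)}\|_X\lesssim h^{-r}\|\D_h^rT_n\|_X$ for $0<h<\pi/n$ (proved in general lattices by the method of \cite{KT21} together with Lemma~\ref{lecon}). That reverse inequality is the missing ingredient in your proposal; without it, or without an additional geometric-growth hypothesis on $\s^{1-p/q}$, the sharp weight in \eqref{cu2} is not recovered.

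A second, more easily repaired slip: you bound $\|f-T_n\|_Y\le E_n(f)_Y+\|T_n^*-T_n\|_Y$ with $T_n^*$ a best $Y$-approximant, but never control $\|T_n^*-T_n\|_Y$; doing so via Nikolskii would require a bound on $\|f-T_n^*\|_X$, which is unavailable since $T_n^*$ is optimal for $Y$, not $X$. The detour is unnecessary: the telescoping argument in the proof of Theorem~\ref{thu} bounds $\|f-T_n\|_Y$ itself (not merely $E_n(f)_Y$) by $\sum_k\s_{2^{k+1}n}^{1-p/q}E_{2^kn}(f)_X$, and that tail does convert correctly, via \eqref{jack} and monotonicity, to the portion $t\le h$ of the integral. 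Your justifications of the easy direction $\w_r(T_n,h)_Y\le h^r\|T_n^{(r)}\|_Y$, of Bernstein and Jackson in general translation invariant lattices via Lemma~\ref{lecon}, and of the $L_\infty$ case as the formal endpoint $1-\frac pq=1$, are all fine and agree with the paper.
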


\begin{proof}
As in the proof of the previous result, we consider only the second inequality~\eqref{cu2}. The proof is standard and based on Corollary~\ref{corNik}, Theorem~\ref{thu}, and
the following Stechkin-Nikolskii-Boas inequality
  \begin{equation}\label{ns}
    h^r\|T_n^{(r)}\|_X\asymp \|\D_h^r T_n\|_X, \quad 0<h<\frac{\pi}{n},\quad T_n\in \mathcal{T}_n,
  \end{equation}
  where the constants in this equivalence depend only on $r$. The latter equivalence can be verified repeating
  the proof of Theorem~3.1 in~\cite{KT21} and using Lemma~\ref{lecon}.

Let $T_n\in \mathcal{T}_n$ be such that $\|f-T_n\|_X=E_n(f)_X$. We have
\begin{equation}\label{thue1}
  \begin{split}
    \w_r(f,n^{-1})_Y\le 2^r\|f-T_n\|_Y+\w_r(T_n,n^{-1})_Y.
  \end{split}
\end{equation}
By \eqref{ns}, \eqref{tnik3}, and the Jackson inequality~\eqref{jack}, we have
\begin{equation}\label{thue2}
  \begin{split}
    \w_r(T_n,n^{-1})_Y&\le Cn^{-r} \|T_n^{(r)}\|_Y\le C(\s(n))^{1-\frac pq}n^{-r}\|T_n^{(r)}\|_X\\
    &\le C(\s(n))^{1-\frac pq}\w_r(T_n,n^{-1})_X\\
    &\le C(\s(n))^{1-\frac pq}\(\|f-T_n\|_X+\w_r(f,n^{-1})_X\)\\
    &\le C(\s(n))^{1-\frac pq}\w_r(f,n^{-1})_X.
  \end{split}
\end{equation}
Now, combining~\eqref{thue1}, \eqref{thue2}, \eqref{u2}, and one more time~\eqref{jack}, we obtain
\begin{equation*}
  \w_r(f,n^{-1})_Y\le C\((\s(2n))^{1-\frac pq}\w_r(f,n^{-1})_X+\sum_{\nu=n+1}^\infty \frac{(\s(4\nu))^{1-\frac pq}}{\nu}\w_r(f,\nu^{-1})_X\),
\end{equation*}
which by monotonicity implies~\eqref{cu2}.
\end{proof}

In the case of Orlicz spaces, we arrive at the following result.

\begin{corollary}
      Let $\Phi$ satisfy conditions 1) and 2) of Definition~\ref{defO}. Then, for any $f\in L_\Phi$, we have
  \begin{equation*}
   E_n(f)_{L_\infty}\le C\(\Phi^{-1}(2n)E_n(f)_\Phi+\sum_{\nu=n+1}^\infty \frac{\Phi^{-1}(4\nu)}{\nu}E_\nu(f)_\Phi\),\quad n\in\N,
  \end{equation*}
  and
    \begin{equation*}
  \omega_r(f,h)_{L_\infty}\le C\int_0^h \omega_r(f,t)_\Phi {\Phi^{-1}\(\frac1t\)}\frac{dt}{t},\quad h>0.
  \end{equation*}
  Moreover, if $0<p<q<\infty$ and $\Psi$ is convex and such that~\eqref{tnik2} is fulfilled, then
    \begin{equation*}
   E_n(f)_\Psi\le C \((\Phi^{-1}(2n))^{1-\frac pq}E_n(f)_\Phi+\sum_{\nu=n+1}^\infty \frac{(\Phi^{-1}(4\nu))^{1-\frac pq}}{\nu}E_\nu(f)_\Phi\),\quad n\in \N,
  \end{equation*}
  and
  \begin{equation*}
   \omega_r(f,h)_\Psi\le C\int_0^h \omega_r(f,t)_\Phi \({\Phi^{-1}\(\frac1t\)}\)^{1-\frac pq}\frac{dt}{t},\quad h>0,
  \end{equation*}
  where $C$ is a positive constant independent of $f$, $n$, and $h>0$.
\end{corollary}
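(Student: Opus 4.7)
The plan is to obtain all four statements as direct specializations of Theorems~\ref{thu} and~\ref{coru} to the Orlicz setting, so the work reduces to (i) computing $\sigma_\nu$ explicitly for $X=L_\Phi$ and (ii) verifying that the pair $(L_\Phi, L_\Psi)$ satisfies the embedding hypothesis~\eqref{tnik2}.

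First, for the quantity $\sigma_\nu=1/\|\chi_{(0,2\pi/\nu)}\|_\Phi$ appearing in Theorems~\ref{thu} and~\ref{coru}, I would repeat the short computation already performed in the proof of Corollary~\ref{corNik}:
\begin{equation*}
\|\chi_{(0,2\pi/\nu)}\|_\Phi=\inf\Bigl\{\lambda:\tfrac{2\pi}{\nu}\Phi\bigl(\tfrac1\lambda\bigr)\le 1\Bigr\}=\frac{1}{\Phi^{-1}(\nu/(2\pi))}.
\end{equation*}
Hence $\sigma_\nu=\Phi^{-1}(\nu/(2\pi))$. Substituting this identity into the conclusions of Theorems~\ref{thu} and~\ref{coru} and absorbing the universal factor $1/(2\pi)$ (resp.\ the factor $2$) into the constant $C$ yields the first and second displayed estimates involving $L_\infty$.

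Second, for the statements with $\Psi$ in place of $L_\infty$, I would verify the hypothesis~\eqref{tnik2} of Theorem~\ref{thu} (and, correspondingly, of Theorem~\ref{coru}) with $X=L_\Phi$, $Y=L_\Psi$. The argument is precisely the one already given at the end of the proof of Corollary~\ref{corNik}: setting $\Phi_1(t)=\Phi(t^{q/p})$, one checks by a change of variables in the Luxemburg functional that $\||g|^{p/q}\|_{\Phi_1}\le \|g\|_\Phi^{p/q}$, and then the bound $\Psi(t)\le K\Phi(t^{q/p})$ together with monotonicity gives $\|f\|_\Psi\le K'\||f|^{q/p}\|_\Phi^{p/q}$ for some $K'$, which is exactly~\eqref{tnik2}. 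Inserting this into Theorems~\ref{thu} and~\ref{coru} together with $\sigma_\nu=\Phi^{-1}(\nu/(2\pi))$ yields the remaining two estimates.

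The main (minor) obstacle is simply being careful with the comparison between $\Phi^{-1}(c\nu)$ for different absolute constants $c$ — since $\Phi$ is only assumed convex and non-decreasing (no $\Delta_2$), one should state the bounds with the exact factors $\Phi^{-1}(\nu/(2\pi))$ coming out of $\sigma_\nu$, and then absorb the $(2\pi)^{-1}$-type factors in $\Phi^{-1}$ into the outer constant $C$ using monotonicity of $\Phi^{-1}$ (the displayed arguments $\Phi^{-1}(2n)$ and $\Phi^{-1}(4\nu)$ in the statement dominate $\Phi^{-1}(\nu/(2\pi))$, so the estimates only become weaker). No other difficulty arises since every analytic ingredient — the Bernstein inequality~\eqref{berOLD}, the embedding into $L_\infty$, and the Jackson/Stechkin machinery — is already packaged inside the general theorems we invoke.
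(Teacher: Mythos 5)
Your proposal is correct and follows exactly the route the paper intends (the corollary is stated without proof as a direct specialization of Theorems~\ref{thu} and~\ref{coru}): you compute $\s_\nu=\Phi^{-1}(\nu/(2\pi))$ and verify~\eqref{tnik2} for the pair $(L_\Phi,L_\Psi)$ precisely as in the proof of Corollary~\ref{corNik}, and your remark that the stated factors $\Phi^{-1}(2n)$, $\Phi^{-1}(4\nu)$, $\Phi^{-1}(1/t)$ dominate the ones produced by the theorems via monotonicity of $\Phi^{-1}$ (no $\Delta_2$ needed) is the right way to close the gap. Nothing is missing.
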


\subsection{Embeddings of  Besov spaces}

Define a positive monotone function $\xi$ on $(1,\infty)$ satisfying the $\Delta_2$-condition, i.e., $\xi(2t)\lesssim \xi(t)$. 
Let $X$ be a translation invariant Banach lattice on $\T$.
For $r\in \N$, $s>0$, we
define the Besov space with the generalized smoothness $\xi$ as follows
\begin{equation*}
B_{X,s}^{\,\xi(\cdot)}({\T}):=
\Big\{ f \in X({\T}): |f|_{B_{X,s}^{\,(\cdot)}({\T})}=\Big(\int_0^1 \Big({\xi(1/t)} \omega_r(f,t)_X \Big)^s\frac{dt}{t}\Big)^{\frac1s} <\infty \Big\}.
\end{equation*}
Here we assume that  $\int_0^1 t^{rs} \xi^s(1/t)\frac{dt}{t}<\infty$, which guaranties that the Besov space is nontrivial.
If $\xi(x)=x^\alpha$ and $X=L_p$, then $B_{X,s}^{\,\xi(\cdot)}({\T})$ coincides with the classical Besov space $B_{p,s}^{\,\alpha}({\T}).$
\begin{corollary}\label{besov}
 Let $s>0$, $0<p<q<\infty$ and let the Banach lattices $X$ and $Y$ be the same as in Theorem~\ref{coru}. Let also
     $\displaystyle
     \s(u)=\frac1{\|\chi_{(0,\frac{2\pi}{u})}\|_X}.$
  Then, for any $f\in X$, we have
  \begin{itemize}
    \item[{\textnormal{(i)}}] $\qquad
B_{X,s}^{\,\xi_1}(\T)\hookrightarrow B_{Y,s}^{\,\xi}(\T)\quad\mbox{with}\quad \xi_1(t)=\xi (t)
\big({\s(t)}\big)^{1-\frac pq},
$
    \item[{\textnormal{(ii)}}] $\qquad
B_{X,s}^{\,\xi_1}(\T)\hookrightarrow B_{L_\infty,s}^{\,\xi}(\T)\quad\mbox{with}\quad \xi_1(t)=\xi (t)
{\s(t)},
$
  \end{itemize}
provided that
\begin{equation}\label{vsp--}\int_1^\delta
{\xi(t)}\frac{dt}{t}\lesssim {\xi(\delta)}  \quad\mbox{for any}\quad \delta>2.
\end{equation}
\end{corollary}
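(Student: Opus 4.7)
The strategy is a two-step argument: first, reduce the modulus of smoothness in $Y$ (resp.\ $L_\infty$) to a weighted integral of the modulus in $X$ via Theorem~\ref{coru}; second, apply a weighted Hardy-type inequality whose validity is guaranteed by assumption~\eqref{vsp--}. I sketch part~(i); part~(ii) is identical after replacing $(\s(1/\tau))^{1-p/q}$ by $\s(1/\tau)$ and using the first inequality of Theorem~\ref{coru} in place of \eqref{cu2}.

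By \eqref{cu2} one has
\[
\omega_r(f,h)_Y\le C\int_0^h G(\tau)\,\frac{d\tau}{\tau},\qquad G(\tau):=\omega_r(f,\tau)_X\bigl(\s(1/\tau)\bigr)^{1-p/q}.
\]
Substituting into the definition of the Besov seminorm gives
\[
|f|_{B_{Y,s}^{\,\xi}(\T)}^{s}\le C^s\int_0^1 \xi(1/h)^s\Big(\int_0^h G(\tau)\,\frac{d\tau}{\tau}\Big)^{s}\frac{dh}{h}.
\]
Since $\xi(1/\tau)\,G(\tau)=\xi_1(1/\tau)\,\omega_r(f,\tau)_X$, the target embedding in~(i) reduces to the weighted Hardy-type inequality
\[
\int_0^1 \xi(1/h)^s\Big(\int_0^h G(\tau)\,\frac{d\tau}{\tau}\Big)^{s}\frac{dh}{h}\;\lesssim\;\int_0^1 \bigl(\xi(1/h)\,G(h)\bigr)^s\,\frac{dh}{h}.
\]

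For $s\ge 1$, I would invoke Minkowski's integral inequality, which reduces the task to checking
\[
\int_\tau^1\xi(1/h)^s\,\frac{dh}{h}\lesssim\xi(1/\tau)^s,\qquad \tau\in(0,1).
\]
The substitution $u=1/h$ turns this into \eqref{vsp--} with $\xi$ replaced by $\xi^s$, a property inherited from $\xi$ since the combination of $\Delta_2$ and \eqref{vsp--} is equivalent to $\xi$ having positive lower Matuszewska--Orlicz index, which is stable under $\xi\mapsto\xi^s$ for every $s>0$. For $0<s<1$ Minkowski fails; here I would decompose $(0,h)=\bigcup_{k\ge 0}(h\,2^{-k-1},h\,2^{-k}]$, apply the subadditivity $(\sum a_k)^s\le\sum a_k^s$, and then use the $\Delta_2$-property of $\s$ together with the quasi-monotonicity of $\omega_r(f,\cdot)_X$ to replace the local supremum of $G$ on each dyadic interval by its value at a dyadic point, after which \eqref{vsp--} applied to $\xi^s$ supplies the required geometric decay. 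The main technical obstacle is precisely this last step: in the range $0<s<1$ one must carefully exploit the dyadic structure to compensate for the loss of Minkowski, and it is here that both the $\Delta_2$ hypothesis on $\xi$ and the doubling condition \eqref{vsp--} enter in an essential way.
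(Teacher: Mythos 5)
Your overall strategy coincides with the paper's: feed Theorem~\ref{coru} into the Besov seminorm and finish with a weighted Hardy-type inequality whose weight condition is~\eqref{vsp--}. (The paper first discretizes~\eqref{cu2} to $\omega_r(f,2^{-\nu})_Y\lesssim\sum_{k\ge\nu}\omega_r(f,2^{-k})_X(\s(2^k))^{1-p/q}$ and then quotes the discrete Hardy inequality of \cite[Lemma~2.5]{PST}, which is valid for \emph{all} $0<s\le\infty$ under $\sum_{k\le n}b_k\lesssim b_n$.) Your $0<s<1$ branch is essentially the standard proof of that lemma and is fine, including the observation that~\eqref{vsp--} passes to $\xi^s$ (indeed~\eqref{vsp--} together with monotonicity and $\Delta_2$ forces $\xi(u)/\xi(v)\gtrsim(u/v)^{\e}$ for $u\ge v$, which is stable under raising to any positive power).

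The gap is in the case $s\ge1$. Minkowski's integral inequality gives
\begin{equation*}
\Big(\int_0^1\xi(1/h)^s\Big(\int_0^hG(\t)\tfrac{d\t}{\t}\Big)^s\tfrac{dh}{h}\Big)^{1/s}
\le\int_0^1G(\t)\Big(\int_\t^1\xi(1/h)^s\tfrac{dh}{h}\Big)^{1/s}\tfrac{d\t}{\t}
\lesssim\int_0^1\xi(1/\t)G(\t)\tfrac{d\t}{\t},
\end{equation*}
i.e.\ a bound by the seminorm of $B_{X,1}^{\,\xi_1}$, \emph{not} of $B_{X,s}^{\,\xi_1}$. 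Since $d\t/\t$ is an infinite measure, there is no inequality $\int_0^1\xi_1(1/\t)\omega_r(f,\t)_X\tfrac{d\t}{\t}\lesssim\big(\int_0^1(\xi_1(1/\t)\omega_r(f,\t)_X)^s\tfrac{d\t}{\t}\big)^{1/s}$ for $s>1$, so this route only yields the strictly weaker embedding $B_{X,1}^{\,\xi_1}\hookrightarrow B_{Y,s}^{\,\xi}$. The correct argument for $s>1$ is not Minkowski but H\"older with a small power of the weight: using $\xi(1/\t)\gtrsim(h/\t)^{\e}\xi(1/h)$ for $\t\le h$, write $\int_0^hG=\int_0^h(G\,\xi(1/\t)^{\e})\,\xi(1/\t)^{-\e}\tfrac{d\t}{\t}$, apply H\"older with exponents $s,s'$, sum the geometric tail $\int_0^h\xi(1/\t)^{-\e s'}\tfrac{d\t}{\t}\lesssim\xi(1/h)^{-\e s'}$, and then integrate in $h$ and use~\eqref{vsp--} for $\xi^{s(1-\e)}$; equivalently, discretize as the paper does and invoke \cite[Lemma~2.5]{PST}, which packages exactly this computation for every $0<s\le\infty$. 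With the $s\ge1$ branch repaired in this way (or simply replaced by the citation), your proof is complete.
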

\begin{proof}
We prove only (ii) since the proof of (i) is similar. Note that \eqref{cu2} is equivalent to
$$   \omega_r(f,2^{-\nu})_Y\le C\sum_{k=\nu}^\infty  \omega_r(f,2^{-k})_X \big({\sigma(2^{k})}\big)^{1-\frac pq}.
$$
Now we use the following Hardy inequality (see \cite[Lemma 2.5]{PST}): for any
 $0< s\le\infty$
and nonnegative sequences $\{a_n\}$ and $\{b_n\}$
such that
$\sum_{k=1}^n b_k\lesssim b_n$ for every $n\in \N$, we have
		\[
		\sum_{n=1}^\infty  \bigg(b_n \sum_{k=n}^\infty a_k\bigg)^s\lesssim\sum_{n=1}^\infty b_n^s a_n^s.
		\]
Thus,
\begin{eqnarray*}
|f|^s_{B_{Y,q}^{\,\xi}(\T)}
&\asymp&
\sum_{\nu=1}^\infty  \omega^s_r(f,2^{-\nu})_Y {\xi(2^{\nu})}^s
\\
&\lesssim&
\sum_{\nu=1}^\infty  {\xi(2^{\nu})}^s \Big(\sum_{k=\nu}^\infty  \omega_r(f,2^{-k})_X \big({\sigma(2^{k})}\big)^{1-\frac pq}
\Big)^s
\\
&\lesssim&
\sum_{\nu=1}^\infty  {\xi(2^{\nu})}^s \({\sigma(2^{\nu})}\)^{{s(1-\frac pq)}}   \omega_r(f,2^{-\nu})_X^s \asymp |f|^s_{B_{X,q}^{\,\xi_1}(\T)}
\end{eqnarray*}
provided that
 (\ref{vsp--}) holds.
\end{proof}


\appendix

\section{}

\subsection{Results from functional analysis}

The following lemma is well known and can be proved using the duality arguments. 

\begin{lemma}\label{lecon}
  Let $X$ be a Banach space $\Omega$  such that there exists a constant $c\ge 1$ such that
  \begin{equation*}
    \|f(\cdot+t)\|_X\le c\|f(\cdot)\|_X\quad\text{for all}\quad t\in \Omega\quad \text{and}\quad f\in X,
  \end{equation*}
  then
  \begin{equation*}
    \bigg\|\int_\Omega f(\cdot-t)g(t)dt\bigg\|_X\le c\|g\|_1\|f\|_X.
  \end{equation*}
\end{lemma}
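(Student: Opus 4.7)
The plan is to prove this by the standard duality argument, which the authors themselves hint at in the sentence preceding the lemma. The idea is that the norm on $X$ can be recovered from pairing against the associate space $X'$, and then Fubini's theorem lets us push the integration in $t$ to the outside, where the translation-boundedness hypothesis applies pointwise in $t$.

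Concretely, denote $h(x)=\int_\Omega f(x-t)g(t)\,dt$. First I would write
$$
\|h\|_X=\sup_{\|\vp\|_{X'}\le 1}\bigg|\int_\Omega h(x)\overline{\vp(x)}\,d\mu(x)\bigg|,
$$
using the duality formula from the introduction of the paper. Next, I would justify swapping the order of integration by Fubini's theorem (after a routine reduction to nonnegative integrands, or to the case where $f$, $g$, $\vp$ are integrable enough to apply Fubini directly), obtaining
$$
\int_\Omega h(x)\overline{\vp(x)}\,d\mu(x)=\int_\Omega g(t)\bigg(\int_\Omega f(x-t)\overline{\vp(x)}\,d\mu(x)\bigg)dt.
$$
The inner integral is bounded in absolute value by $\|f(\cdot-t)\|_X\|\vp\|_{X'}\le c\|f\|_X\|\vp\|_{X'}$ by H\"older's inequality for the pair $(X,X')$ together with the translation hypothesis.

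Plugging this bound back and taking absolute values inside the outer integral yields
$$
\bigg|\int_\Omega h(x)\overline{\vp(x)}\,d\mu(x)\bigg|\le c\|f\|_X\|\vp\|_{X'}\int_\Omega |g(t)|\,dt=c\|f\|_X\|\vp\|_{X'}\|g\|_1,
$$
and taking the supremum over $\|\vp\|_{X'}\le 1$ gives the claim. The only real issue to watch is the applicability of Fubini and the existence of a dual characterization of $\|\cdot\|_X$; both are standard for Banach function lattices contained in $L_1$ (cf.\ the discussion around the definition of $X'$ in the introduction), so the proof is essentially mechanical.
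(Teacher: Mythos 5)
Your argument is correct and is precisely the duality argument the paper itself alludes to (the paper gives no proof of this lemma, only the remark that it "can be proved using the duality arguments"): Tonelli applied to the nonnegative integrand justifies Fubini, and H\"older for the pair $(X,X')$ together with the translation hypothesis bounds the inner integral. The only point worth making explicit is that recovering $\|h\|_X$ as the supremum over $\|\vp\|_{X'}\le 1$ uses the Lorentz--Luxemburg theorem ($X=X''$ isometrically), which holds for Banach function spaces in the sense of the cited reference \cite{BeSh}.
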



The following result about interpolation of linear functions see in~\cite{BE}.

\begin{lemma}\label{ilf}
  Let $C(Q)$ be the set of complex valued continuous functions on the compact Hausdorff space $Q$. Let $\Phi_n$ be an $n$-dimensional linear subspace of $C(Q)$ over $\C$. Let $L\neq 0$ be a linear functional on $\Phi_n$. Then there exist points $x_1,x_2,\dots,x_{2n-1}$ in $Q$ and nonzero numbers $\l_1,\l_2,\dots,\l_{2n-1}$ such that
  $$
  L(s)=\sum_{i=1}^{2n-1} \l_i s(x_i),\quad s\in \Phi_n,
  $$
  and
  $$
  \|L\|=\sup\{|L(s)|\,:\, s\in \Phi_n,\,\, \|s\|_{L_\infty(Q)}\le 1\}=\sum_{i=1}^{2n-1}|\l_i|.
  $$
\end{lemma}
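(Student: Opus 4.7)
My plan is to realize the functional $L$ as integration against a complex Borel measure via Hahn--Banach and the Riesz representation theorem, and then reduce this representing measure to one supported on at most $2n-1$ points by a Krein--Milman/extreme--point argument together with a dimension count.

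First, I would extend $L$ from $\Phi_n$ to a continuous linear functional $\widetilde L$ on the whole of $C(Q)$ by the Hahn--Banach theorem with $\|\widetilde L\| = \|L\|$. Next, the Riesz representation theorem for the dual of $C(Q)$ produces a regular complex Borel measure $\mu$ on $Q$ with $\widetilde L(f)=\int_Q f\, d\mu$ for every $f\in C(Q)$ and $|\mu|(Q)=\|\widetilde L\|=\|L\|$. In particular $L(s)=\int_Q s\,d\mu$ for all $s\in\Phi_n$, and the representation inequality $\|L\|\le \sum|\lambda_i|$ would, in any finite-support representation, become an equality as soon as the total variation is $\|L\|$.

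The main step is to replace $\mu$ by a finitely supported measure. I would consider
$$
K=\Big\{\nu\in M(Q):\ \int_Q s\, d\nu=L(s)\ \text{for all }s\in\Phi_n,\ |\nu|(Q)\le\|L\|\Big\}.
$$
This set is convex, nonempty (it contains $\mu$), and weak-$*$ compact in $M(Q)=C(Q)^*$, so by the Krein--Milman theorem it has an extreme point $\mu_0$. A quick averaging argument shows that necessarily $|\mu_0|(Q)=\|L\|$, since otherwise one could perturb $\mu_0$ within a small weak-$*$ neighborhood and keep it in $K$, contradicting extremality.

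The hard part will be showing that any such extreme $\mu_0$ is supported on at most $2n-1$ points. Suppose $\mu_0=\sum_{i=1}^k \lambda_i\delta_{x_i}$ with $\lambda_i\ne 0$. For any perturbation $\eta=\sum_{i=1}^k d_i\delta_{x_i}$, one computes
$$
|\lambda_i+\varepsilon d_i|=|\lambda_i|+\varepsilon\,\mathrm{Re}\!\left(\tfrac{\overline{\lambda_i}}{|\lambda_i|}d_i\right)+O(\varepsilon^2),
$$
so that $\mu_0\pm\varepsilon\eta\in K$ for small $\varepsilon>0$ forces (i) the annihilator conditions $\sum_i d_i s(x_i)=0$ for all $s\in\Phi_n$ (i.e., $n$ complex = $2n$ real homogeneous conditions on $(d_i)\in\mathbb{C}^k$), together with (ii) a further real condition coming from the first-order change of the total variation, and the constraint that the second-order change of the total variation remain compatible. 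A careful book-keeping of these real-linear constraints shows that the space of admissible perturbations is nontrivial as soon as $k\ge 2n$, contradicting extremality; hence $k\le 2n-1$. Relabeling $\mu_0=\sum_{i=1}^{2n-1}\lambda_i\delta_{x_i}$ (allowing some $\lambda_i$'s to coincide with zero points as needed and then discarding them, or else keeping $k\le 2n-1$ genuine atoms), the identity $L(s)=\int s\, d\mu_0=\sum_i\lambda_i s(x_i)$ together with $\sum_i|\lambda_i|=|\mu_0|(Q)=\|L\|$ yields both the representation and the norm equality claimed in the lemma.
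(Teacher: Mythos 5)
The paper does not actually prove Lemma~\ref{ilf}; it imports it from~\cite{BE}, so there is no internal proof to compare against --- only the standard argument behind that citation. Your overall strategy (Hahn--Banach extension, Riesz representation, then a convexity/extreme-point reduction to finitely many atoms) is the right family of ideas, and the first two steps are fine. But the core step has two genuine gaps. First, you \emph{assume} that the extreme point $\mu_0$ of $K$ is already a finite sum of point masses; that is precisely what must be proved. (This is fixable: perturb $\mu_0$ by densities $h\in L^\infty_{\mathbb{R}}(|\mu_0|)$ via $d\nu_\pm=(1\pm\e h)\,d\mu_0$ and count constraints to rule out a non-atomic part or infinitely many atoms.) Second, and more seriously, your dimension count does not yield $2n-1$. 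For $\mu_0\pm\eta$ to both lie in $K$ with $\eta=\sum_i d_i\delta_{x_i}$, the identity $|\l+d|+|\l-d|\ge 2|\l|$ (with equality iff $d\in\R\l$) forces $d_i=t_i\l_i$ with $t_i\in\R$; the strict convexity of the total variation transverse to the ray through $\l_i$ is working \emph{against} you here, not merely adding "a further real condition". You are then left with $k$ real parameters $(t_i)$ subject to the $2n$ real annihilation conditions plus $\sum_i t_i|\l_i|=0$, i.e.\ $2n+1$ conditions, so extremality only gives $k\le 2n+1$. The claimed contradiction for $k\ge 2n$ does not follow, and no amount of "book-keeping" of these constraints produces $2n-1$.

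The missing idea that produces the sharp count is norm attainment inside $\Phi_n$. Since $\Phi_n$ is finite-dimensional, there is $s_0\in\Phi_n$ with $\|s_0\|_{L_\infty(Q)}=1$ and $L(s_0)=\|L\|$. If $\mu$ represents $L$ with $|\mu|(Q)=\|L\|$ and $d\mu=u\,d|\mu|$, $|u|=1$, then $\int \operatorname{Re}(s_0u)\,d|\mu|=|\mu|(Q)$ forces $|s_0|=1$ and $u=\overline{s_0}$ on $\operatorname{supp}|\mu|$. Hence $L/\|L\|$ lies in the convex hull of the compact set $E_0=\{\overline{s_0(x)}\,\delta_x|_{\Phi_n}:\ |s_0(x)|=1\}$, and every element of $E_0$ satisfies the \emph{complex} equation $F(s_0)=1$, i.e.\ two real equations, so $E_0$ sits in a real affine subspace of $\Phi_n^*\cong\R^{2n}$ of dimension $2n-2$. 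Carath\'eodory there gives a convex combination of at most $2n-1$ points, and the aligned phases give $\sum_i|\l_i|=\sum_i \l_i s_0(x_i)=L(s_0)=\|L\|$ automatically. (Equivalently: after the reduction to positive measures $\s$ with $d\mu=\overline{s_0}\,d\s$, the constraint coming from $s=s_0$ contributes only one nontrivial real equation, so the extreme points of the resulting set of positive measures have at most $2n-1$ atoms.) Without this reduction your route caps out at $2n+1$ atoms. I note that for the paper's applications (Propositions~\ref{pMarkX}, \ref{sMarkX}, \ref{pMarkXE}) only the existence of \emph{some} finite representation with $\sum_i|\l_i|=\|L\|$ matters, so a repaired version of your argument would suffice there --- but it does not prove the lemma as stated.
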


\subsection{Direct approximation inequalities}\label{A2}

Let $X$ be a translation invariant Banach lattice on $\T$. The error of best approximation of $f\in X$ and the error of best one-sided approximation by trigonometric polynomials of degree at most $n$ are given by
$$
E_n(f)_X=\inf\{\Vert f-T_n\Vert_X\,:\, T_n\in \mathcal{T}_n\}
$$
and
$$
\widetilde{E}_n(f)_X=\inf\{\Vert Q_n-q_n\Vert_X\,:\,  Q_n,q_n \in \mathcal{T}_n,\quad q_n(x)\le f(x)\le Q_n(x)\},
$$
respectively.

The $r$th modulus of smoothness of a function $f\in X$ is defined by
\begin{equation*}
    \omega_r(f,\d)_X=\sup_{0<h<\d} \Vert \D_h^r f\Vert_X,
\end{equation*}
where
$$
\D_h^r f(x)=\sum_{\nu=0}^r\binom{r}{\nu}(-1)^{\nu} f(x+(r-\nu)h),
$$
$\binom{r}{\nu}=\frac{r (r-1)\dots (r-\nu+1)}{\nu!},\quad \binom{r}{0}=1$.
The $r$th averaged modulus of smoothness ($\tau$-modulus) of  $f\in C(\T)$ is defined by
$$
\tau_r(f,\d)_X=\|\omega_r(f,\cdot,\d)\|_X,
$$
where
$$
\omega_r(f,x,\d)=\sup\left\{|\D_h^r f(t)|\,:\,t,t+rh\in [x-r\d/2,x+r\d/2]\right\}
$$
is the local modulus of smoothness of $f$. 
See~\cite{SP} for more information on the averaged moduli of smoothness. 

\begin{lemma}\label{appX}
  Let $X$ be a translation invariant Banach lattice $X$ on $\T$. Then

\begin{enumerate}
    \item[$(i)$]  for each $f\in X$ and $r\in \N$,
  \begin{equation}\label{jack}
  E_n(f)_X\le C \omega_r\(f, \frac1n\)_{X};
\end{equation}

\item[$(ii)$] for each $f\in B(\T)$ and $r\in \N$,
\begin{equation}\label{jacktau}
  \widetilde{E}_n(f)_X\le C \tau_r\(f, \frac1n\)_{X};
\end{equation}

\item[$(iii)$] for each $f$ such that $f^{(r-1)}\in AC(\T)$ and $f^{(r)}\in X$ for some $r\in \N$,
\begin{equation}\label{rrr}
  \widetilde{E}_n(f)_{X} \le \frac{C_r}{n^r} E_n(f^{(r)})_{X}.
\end{equation}
\end{enumerate}
\end{lemma}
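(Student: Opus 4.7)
The plan is to deduce all three parts from Lemma~\ref{lecon} (the Minkowski-type inequality in translation invariant lattices), combined with classical pointwise or integral representations of the approximation error. In each case a scalar identity is produced, and Lemma~\ref{lecon} with $c=1$ converts it into an $X$-norm estimate.

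For (i), I would fix a positive Jackson-type trigonometric kernel $J_{n,r}$ of degree at most $n$ with $\int_\T J_{n,r}=1$ and $\int_\T |t|^s J_{n,r}(t)\,dt \lesssim n^{-s}$ for $1\le s\le r$ (a suitable power of the Fej\'er kernel works). Setting $T_n = J_{n,r}\ast f\in\mathcal{T}_n$, the vanishing moment conditions together with the Taylor-type expansion of $f(x-t)$ around $x$ yield an identity of the form $f(x)-T_n(x)=\int_\T\Delta_t^r f(x)\,G_{n,r}(t)\,dt$ for an auxiliary kernel $G_{n,r}$ with $\|G_{n,r}\|_{L_1(\T)}\lesssim 1$ essentially concentrated at scale $1/n$. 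Lemma~\ref{lecon} then gives
$$
\|f-T_n\|_X \lesssim \int_\T \omega_r(f,|t|)_X\, |G_{n,r}(t)|\,dt \lesssim \omega_r(f,1/n)_X,
$$
via the standard submultiplicativity $\omega_r(f,\lambda\delta)_X\lesssim (1+\lambda)^r\omega_r(f,\delta)_X$, proving (i).

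For (ii), I would invoke the classical one-sided approximation construction of Popov--Sendov (see~\cite{SP}): there exist $q_n,Q_n\in\mathcal{T}_n$ with $q_n\le f\le Q_n$ and a positive kernel $K_n$ of $L_1$-norm $\lesssim 1$ such that $0\le Q_n(x)-q_n(x)\le C(K_n\ast \omega_r(f,\cdot,1/n))(x)$ pointwise on $\T$. Taking $X$-norms and applying Lemma~\ref{lecon} bounds the right-hand side by $C\|\omega_r(f,\cdot,1/n)\|_X=C\tau_r(f,1/n)_X$, which is exactly (ii).

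For (iii), after subtracting a constant (which does not affect $\widetilde{E}_n$) we may assume $f$ has zero mean, so $f^{(r)}$ also has zero mean and a near-best $X$-approximation $S\in\mathcal{T}_n$ to $f^{(r)}$ can be chosen with zero mean. Then $S$ admits a trigonometric antiderivative $P\in\mathcal{T}_n$ with $P^{(r)}=S$; setting $g=f-P$, one has $\widetilde{E}_n(f)_X=\widetilde{E}_n(g)_X$ and $g^{(r)}=f^{(r)}-S$. A pointwise Taylor estimate shows $\omega_r(g,x,1/n)\lesssim n^{-r}(|g^{(r)}|\ast H_n)(x)$ for a non-negative, compactly supported kernel $H_n$ with $\|H_n\|_{L_1(\T)}\lesssim 1$, so another application of Lemma~\ref{lecon} yields $\tau_r(g,1/n)_X\lesssim n^{-r}\|g^{(r)}\|_X=n^{-r}E_n(f^{(r)})_X$; combining with (ii) gives (iii).

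The main obstacle is producing the pointwise envelope in (ii) with an explicit positive kernel $K_n$ of bounded $L_1$-norm: this is the classical Popov--Sendov construction and is what forces the estimate to be in terms of $\tau_r$ rather than $\omega_r$. Once that envelope is in hand, (ii) and (iii) both reduce to the same Lemma~\ref{lecon}-plus-convolution strategy used for (i). The key principle throughout is to keep every local-maximal operation inside a convolution of small $L_1$-norm, since no boundedness of the Hardy--Littlewood maximal operator can be assumed in a general translation invariant lattice $X$.
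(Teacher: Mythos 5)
Your proposal is correct and follows essentially the same route as the paper: the paper's proof simply instructs the reader to repeat the classical $L_p$ arguments (DeVore--Lorentz for (i), Sendov--Popov Theorems 8.1--8.3 for (ii) and (iii)), substituting Lemma~\ref{lecon} for Minkowski's integral inequality, which is exactly the skeleton you flesh out. The only cosmetic imprecision is in (i), where the plain convolution $J_{n,r}\ast f$ should be the generalized Jackson operator built from $r$-th differences to produce the identity $f-T_n=\int_\T\Delta_t^r f\,G_{n,r}(t)\,dt$, but this is the standard construction and does not affect the argument.
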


\begin{proof}
  Inequalities \eqref{jack}, \eqref{jacktau}, and~\eqref{rrr} can be proved repeating step-by-step the proofs of the corresponding results in $L_p$-spaces and employing Lemma~\ref{lecon} instead of convolution inequalities or Minkovski's inequality in those proofs. In particular, the proof of~\eqref{jack} is based on the proof of~\cite[Theorem~2.3]{DL}, see also~\cite[Remark, P.~205]{DL}. For the proof~\eqref{jacktau} see~\cite[Theorems 8.2 and 8.3]{SP}. Concerning inequality~\eqref{rrr}, the required scheme can be found in~\cite[Theorem~8.1]{SP}. See also~\cite{Ra84} for the proof of \eqref{jack} in the case of Orlicz spaces.
\end{proof}

\subsection{Various function spaces.}\label{Afs}
In what follows $\mu$ is some positive $\s$-finite measure on $\Omega\subset \R^d$.
Below we list some important spaces and their (quasi-)norms, which we use as typical examples of $X$ in our  results:

 I) \emph{Weighted $L_p$ spaces}  $L_{p,w}(\Omega)$, $0<p\le \infty$, with
 $$
 \|f\|_{p,w}=\(\int_{\Omega}|f(x)|^p w(x)dx\)^{1/p},
 $$
 where $w$ is some positive weighted function on $\Omega$. Recall that the weight $w$ is called doubling on $\Omega$ if
$\int_{2B}w\le L\int_Bw$ for some $L>0$ and any ball $B\subset \Omega$. 

II) \emph{Mixed Lebesgue spaces} $L_{\overline{p}}(\T^d)$, $\overline{p}=(p_1,\dots,p_d)$, $0< p_i\le \infty$, $i=1,\dots,d$, with
$$
\|f\|_{L_{\overline{p}}(\T^d)}=\(   \int_{\T}\(\dots\(\int_{\T}|f(x_1,\dots,x_d)|^{p_1}dx_1\)^{p_2/p_1}\dots\)^{p_d/p_{d-1}}dx_d   \)^{1/p_d}.
$$

III) \emph{Orlicz spaces} $L_{\Phi}=L_{\Phi}(\Omega,\mu)$
with the Luxemburg norm
\begin{equation*}
  \|f\|_{\Phi}=\inf\bigg\{\l\,:\,\int_{\Omega} \Phi\(\frac{f(t)}{\l} \)d\mu(t)\le 1\bigg\}.
\end{equation*}
In the above definition $\Phi\,:\, \C\mapsto \R_+$ satisfies conditions 1) and 2) of Definition~\ref{defO} below. For such functions $\Phi$, the space $L_{\Phi}$  consisting of measurable on $\Omega$ functions $f$ with the finite norm $\|f\|_{\Phi}$
is a Banach space, see~\cite{KR61} for this and equivalent definitions of Orlicz spaces.

\begin{definition}\label{defO}
A function $\Phi\,:\, \C\to \R_+$ is an Orlicz function (or $N$-function) if the following conditions hold:
\begin{itemize}
  \item[$1)$] $\Phi$ is continuous, even and convex;
  \item[$2)$] $\Phi(x)=0$ if and only if $x=0$;
  \item[$3)$] $\underset{x\to 0}{\lim}\frac{\Phi(x)}{x}=0$ and $\underset{x\to \infty}{\lim}\frac{\Phi(x)}{x}=\infty$.
\end{itemize}
\end{definition}

Recall also that $\Phi$ satisfies the $\Delta_2$-condition, or simply $\Phi\in\Delta_2$, if there exist $u'>0$ and $K>1$ such that
$$
\Phi(2u)\le K\Phi(u)\quad\text{for all}\quad u\ge u'.
$$
The condition $\Delta_2$ plays an important role in the theory of Orlicz spaces. For example, it is well known that the Orlicz space $L_\Phi$ is reflexive if and only if $\Phi\in \Delta_2$ and $\Phi^*\in \Delta_2$, where
$$
\Phi^*(v)=\sup_{u\in \R}\{uv-\Phi(u)\}
$$
is the complementary function of $\Phi$.

%
%
%
%
%

IV) \emph{Lorentz spaces} $L_{p,q}(\Omega)$, $0<p,q<\infty$,  with the quasi-norm
\begin{equation*}
  \|f\|_{p,q}=\(\int_0^\infty \(t^{1/p}f^*(t)\)^q\frac{dt}{t}\)^{1/q},
\end{equation*}
where
$$
f^*(t)=\inf\{y\,:\, \mu(\{x\in \Omega\,:\, |f(x)|\ge y\})\le t\}.
$$

V) \emph{Variable Lebesgue spaces} $L_{p(\cdot)}$ with the norm
\begin{equation*}
  \|f\|_{p(\cdot)}=\inf\bigg\{\l\,:\,\int_{\Omega} \bigg|\frac{f(x)}{\l}\bigg|^{p(x)}d\mu(x)\le 1\bigg\},
\end{equation*}
where $p$ is a  function satisfying   $1 \le p_-\le p(x)\le p_+ < \infty$ on $\Omega$.  In the theory of variable Lebesgue spaces, the following log-H\"older condition plays an important role:
\begin{equation}\label{log}
    |p(x)-p(y)|\le \frac{C}{\ln\frac{1}{|x-y|}}\quad\text{for all}\quad x,y\in\Omega\quad\text{with}\quad |x-y|\le \frac{1}{2},
\end{equation}
where $C$ does not depend on $x$ and $y$.
See, e.g.,~\cite{KMRS16} for the main properties of these spaces.

VI) Morrey space $M_{p,q}^\l(\Omega)$, $0<p<\infty$, $0<q\le \infty$ and $0\le \l\le \frac dp$, with the quasi-norm
$$
\|f\|_{M_{p,q}^\l}=\(\int_0^\infty \(r^{-\l} \sup_{x\in \Omega} \|f\|_{L_p(Q(x,r))}\)^q\frac{dr}{r}\)^{1/q}.
$$


Note that all mentioned above spaces are examples of the so-called ball spaces.


\bigskip
\bigskip
\bigskip

\begin{center}
    \textbf{Acknowledgements}
\end{center}

This research was supported through the program "Oberwolfach Research Fellows" by the Mathematisches Forschungsinstitut Oberwolfach in 2023.
The authors would also like  to thank the Isaac Newton Institute for Mathematical Sciences, Cambridge, for support and hospitality during the programme "Discretization and recovery in high-dimensional spaces", where work on this paper was undertaken. This work was supported by EPSRC grant EP/R014604/1

                         \end{document}